\journal{Journal of Multivariate Analysis}
\theoremstyle{plain}
\newtheorem{theorem}{Theorem}
\newtheorem{lemma}{Lemma}
\theoremstyle{definition}
\def\bZ{\boldsymbol{Z}}
\def\btau{\boldsymbol{\tau}}
\def\ss{\boldsymbol{s}}
\def\R{{\rm I\! R}}
\def\N{{\rm I\! N}}
\begin{document}

\begin{frontmatter}

\title{Unifying Compactly Supported and Mat{\'e}rn Covariance Functions in Spatial Statistics}

\author[1]{Moreno Bevilacqua\corref{mycorrespondingauthor}}
\author[2]{Christian Caama\~no-Carrillo}
\author[3]{Emilio Porcu}

\address[1]{Facultad de Ingenier\'ia y Ciencias, Universidad Adolfo Ib\'a\~nez, Vi\~na del Mar, Chile.}
\address[2]{Departamento de Estad\'istica, Universidad del B\'io-B\'io, Concepci\'on, Chile.}
\address[3]{Department of Mathematics, Khalifa University, Abu Dhabi}

\cortext[mycorrespondingauthor]{Corresponding author. Email address: moreno.bevilacqua@uai.cl (M. Bevilacqua)\url{}}

\begin{abstract}
The Mat{\'e}rn family  of covariance functions has played a central role in spatial statistics for decades, being  a flexible parametric class with one parameter determining the smoothness of the paths of the underlying spatial field.
This paper proposes a  family of spatial covariance functions, which stems from a reparameterization of the
 generalized Wendland family. As for the Mat{\'e}rn case, the proposed family allows for a continuous parameterization of the smoothness of the underlying Gaussian random field, being additionally compactly supported.

More importantly, we show that the  proposed  covariance  family generalizes the Mat{\'e}rn  model which is attained as  a special limit case.
This implies that    the (reparametrized) Generalized Wendland  model is more flexible than the Mat{\'e}rn model
with an extra-parameter that allows for switching from compactly to globally  supported covariance functions.

Our
numerical experiments elucidate the
speed of convergence of the proposed model to the  Mat{\'e}rn model. We also inspect the asymptotic  distribution of the  maximum
likelihood method when estimating the parameters of the proposed covariance models  under  both increasing and fixed domain asymptotics.
The effectiveness of our proposal   is illustrated by analyzing a georeferenced dataset of mean temperatures over a region of French, and performing a re-analysis of a large spatial point referenced dataset of yearly total precipitation anomalies.


\end{abstract}

\begin{keyword} 
Gaussian random fields\sep Generalized Wendland model\sep Fixed domain asymptotics\sep Sparse matrices.
\MSC[2020] Primary 62H11 \sep
Secondary 62M30
\end{keyword}

\end{frontmatter}

\section{Introduction\label{sec:1}}


Many applications of statistics across a wide range of disciplines rely on the estimation of the
spatial dependence of a physical process based on irregularly spaced observations and
predicting the process at some unknown spatial locations. Gaussian random fields (RFs)
are fundamental to spatial statistics and several other disciplines, such as machine learning,
computer experiments and image analysis, as well as in other branches of applied mathematics including numerical analysis and interpolation theory.

The Gaussian assumption implies  the finite dimensional distributions to be completely
specified through the mean and covariance function. A necessary and sufficient requirement
for a given function to be the covariance function of a Gaussian RF is that it is positive definite.
 Such a requirement is traditionally ensured by selecting a parametric family of covariance functions \citep{Stein:1999}.

 Covariance functions depending exclusively on the distance between any pair of points
located over the spatial domain are called isotropic.
 There is a rich catalog of available spatially isotropic covariance functions  \citep{Stein:1999,Banerjee-Carlin-Gelfand:2004,Cressie:Wikle:2011}, and we make an explicit point in that covariance functions might be globally or compactly supported. The former means that the covariance function does not vanish in the domain of reference, and the latter means that the covariance function vanishes outside  a ball with given radii embedded in a $d$-dimensional Euclidean space.
The use of compactly supported covariance models has been advocated
 when working with (but not necessarily) large spatial datasets    \citep{Furrer:2006,Kaufman:Schervish:Nychka:2008,bd2012,Bevilacqua:20189}  since well-established and implemented algorithms for sparse matrices can be used when estimating the covariance and/or predicting at
 some unknown spatial location
 (see \cite{Furrer:Sain:2010} and the references therein).

Among covariance models with global support, the Mat{\'e}rn  family  \citep{Matern:1960, guttorp2006studies} is the most  popular,
 as it allows for  parameterizing in a continuous fashion the differentiability  of the sample paths of the associated Gaussian RF. Furthermore, it has a very simple form for the associated spectral density, which is crucial for studying the properties of maximum likelihood (ML) estimation \citep{Zhang:2004}, and kriging prediction \citep{Stein:1988, Stein:1990, Furrer:2006}  under fixed domain asymptotics. The Mat{\'e}rn family includes interesting special cases, such as the exponential model, and a rescaled version of the Mat{\'e}rn family converges to the
Gaussian covariance model   \citep{guttorp2006studies}. Additionally, the Mat{\'e}rn model is associated with a class of   stochastic partial differential equations \citep{whittle1954stationary} that has inspired a fertile body of  literature on the  approximation of continuously indexed Gaussian RFs through Markov Gaussian RFs \citep{Lindgren:Rue:Lindstrom:2011}.
Finally, most of the literature on modeling  
 spatiotemporal or multivariate data modeling  is based on the Mat{\'e}rn model as a building block (see \cite{ste2005}, {\cite{pa2006} and \cite{Gneiting:Kleibler:Schlather:2010}, to name a few).

 From a computational perspective, a drawback  of the globally supported   Mat{\'e}rn  family  is that, for a given collection of  $n$ scattered spatial points,  the associated covariance matrix is dense and in this case   the evaluation  of the multivariate Gaussian density  and/or of the optimal predictor
is impractical when $n$ is large. Various scalable estimating/prediction  methods for massive spatial  data have been proposed to reduce the computational burden   (see  \cite{Heatonetal:2019}  and the references therein for a recent review). One of these method is
 the  covariance tapering  technique  proposed in \citep{Furrer:2006,Kaufman:Schervish:Nychka:2008,Stei:13,Wang:Loh:2011}. This kind of approximation  is obtained by specifying a covariance model as the product
of the Matérn model with a  compactly supported   correlation function (the taper function). This allows
  to achieve  a prefixed level of sparseness  in the (misspecified) covariance matrix that can be handled using  algorithms for sparse matrices.

As  recently shown in \cite{Bevilacqua:20189}, a  more appealing  approach with respect to the covariance tapering technique is
to work  with flexible compactly supported covariance models. In particular they study   the generalized Wendland family  introduced in the seminal paper of  \cite{Gneiting:2002b} (see also \cite{Wendland:1995} and \cite{zastavnyi2000positive}).
This class of covariance functions is compactly supported over balls with given radius embedded in $\R^d$ and  it allows for the parameterization of the differentiability of the sample paths of the  underlying Gaussian RF in the same fashion as the Mat{\'e}rn model.
The fact that it is compactly supported manifests a clear practical computational advantage with respect to a globally supported covariance Mat{\'e}rn model.
\cite{Bevilacqua:20189} show, additionally, that under some specific conditions, the Gaussian measures induced by the Mat{\'e}rn and generalized Wendland families
are equivalent.
As a consequence, the kriging   predictors
using  these two covariance models,
have asymptotically the same efficiency under
fixed domain asymptotics \citep{Stein:1999}.



Both  Mat{\'e}rn and generalized Wendland models have three parameters indexing variance, spatial scale (compact support parameter  for the second) and smoothness of the underlying Gaussian RF. Additionally, the
 generalized Wendland model has an extra-parameter that has been conventionally fixed in applications involving spatial data
 and whose  interpretation has not been well understood so far.

 This paper shows that this additional parameter serves a crucial role in proposing a  class of spatial covariance models that unifies the most common covariance models, whatever their support. Specifically, we consider a specific reparameterized  version of the generalized Wendland model, and we show that the Mat{\'e}rn model is  attained as special case  when   the limit to infinity of the additional parameter is considered.
  Hence,  for the first time, we unify compactly and globally supported models under a unique flexible class of spatially isotropic covariance models.
In other words,   the proposed family is a generalization of the the Mat{\'e}rn model
 with an additional parameter
that, for  given smoothness and spatial dependence parameters, allows    for switching from the world  of flexible compactly supported covariance functions
 to the world of flexible globally supported covariance functions.


Our
numerical experiments examine the
speed of convergence of the proposed model to the  Mat{\'e}rn model and  then  we focus
on assessing the asymptotic distribution of the  ML estimator under both increasing and fixed domain asymptotics when estimating the parameters of the proposed covariance model.


While the use of compactly rather than globally supported models implies considerable computational gains \citep{Bevilacqua:20189, Furrer:2006}, it is common belief that compactly supported models are generally associated with a poorer finite sample performance in both terms of maximum likelihood estimation as well as best linear unbiased prediction. Our real data illustrations show that the reparameterized generalized Wendland model can even outperform the Mat{\'e}rn model in terms of both   model fitting and prediction performance. This fact is particularly shown in the first  application.
The second application 
 emphasizes
 the computational savings of the proposed model  with respect to the Mat{\'e}rn model.
The proposed model has been implemented in the \texttt{GeoModels} package \citep{Bevilacqua:2018aa} for the  open-source  \textsf{R} statistical environment.

 The remainder of this paper is organized as follows.
Section \ref{sec2} provides background material about the Mat{\'e}rn and generalized Wendland  covariance models. Section \ref{sec3} provides the main theoretical results  of this paper. In particular, we propose a reparametrization of the Generalized Wendland class  and we show that the Mat{\'e}rn model becomes a special limit case of this class.
   Section \ref{sec5} provides numerical experiments on the speed of convergence of the proposed model to the  Mat{\'e}rn model. We also inspect  the asymptotic distribution of the  ML estimator under both increasing and fixed domain asymptotics.
   In Section \ref{sec6} we analyze a georeferenced dataset of mean temperatures
over a specific region of French and perform a re-analysis of a large spatial point
referenced dataset of yearly total precipitation anomalies.
Finally,  Section
\ref{sec7}  provides some conclusions.

\section{Mat{\'e}rn and generalized Wendland covariance models} \label{sec2}

\subsection{{\bf Gaussian RFs and Isotropic covariance Functions}}

We denote $Z=\{Z(\ss), \ss \in D \} $ as a zero-mean Gaussian RF on a
bounded set $D$ of $\R^d$,  $d=1,2,\ldots$ with stationary covariance function $C:\R^d
\to \R$. The function $C$ is called isotropic when
\begin{equation*} \label{generator}
{\rm cov} \left ( Z(\ss_1), Z(\ss_2) \right )= C(\ss_1-\ss_2)=  \sigma^2 \phi(\|\ss_2 -\ss_1 \|),
\end{equation*}
with $\ss_i \in D$, $i=1,2$, and $\|\cdot\|$ denoting the Euclidean norm, $\sigma^2$ denoting the variance of $Z$, and $\phi:[0,\infty) \to \R$
 with $\phi(0)=1$. For the remainder of the paper, we shall be ambiguous when calling $\phi$ a correlation function. Additionally, we use $r$ for $\|\mathbf{x}\|$, $\mathbf{x} \in \R^d$. \\
Spectral representation of isotropic correlation functions  is available thanks to
\cite{Shoe38}, who showed that the function $\phi$ can be uniquely written as
$$ \phi(r)= \int_{0}^{\infty} \Omega_{d}(r z) F({\rm d} z), \qquad r \ge 0,$$ where
 $\Omega_{d}(r)= r^{1-d/2}J_{d/2-1}(r)$ and $J_{\nu}$ is a Bessel function of order $\nu$. Here, $F$ is a probability measure and is called isotropic {\em spectral measure}. If $F$ is absolutely continuous, then Fourier inversion in concert with arguments in \citet{Yaglom:1987} and \citet{Stein:1999} allow to define the isotropic spectral density, $\widehat{\phi}$, as
\begin{equation} \label{FT}
 \widehat{\phi}(z)= \frac{z^{1-d/2}}{(2 \pi)^d} \int_{0}^{\infty} u^{d/2} J_{d/2-1}(uz)  \phi(u) {\rm d} u, \qquad z \ge 0.
\end{equation} A sufficient condition for  $\widehat{\phi}$ to be well-defined is that $\phi(r)r^{d-1}$ is absolutely integrable.
We now  focus on two  flexible parametric families of isotropic correlation functions.


\subsection{{\bf The Mat{\'e}rn Family}}

The Mat{\'e}rn family of isotropic correlation functions  \citep{Stein:1999}  is defined as follows:
\begin{equation*}
{\cal M}_{\nu,\beta}(r)= \frac{2^{1-\nu}}{\Gamma(\nu)} \left (\frac{r}{\beta}
  \right )^{\nu} {\cal K}_{\nu} \left (\frac{r}{\beta} \right ),
  \qquad r \ge 0,
\end{equation*}
for  $\nu>0,\beta>0$, and it is positive definite in any dimension $d=1, 2, \ldots$. Here, $\Gamma$ is the gamma function and ${\cal K}_\nu$ is the modified Bessel function of the second kind \citep{Abra:Steg:70} of the order $\nu$.
 The parameter $\nu$ indexes the mean squared differentiability  of a Gaussian RF having a Mat{\'e}rn correlation function and its associated sample paths.   In particular, for a
positive integer $k$, the sample paths are $k$ times differentiable, in any direction, if
and only if $\nu>k$ \citep{Stein:1999,BANERJEE200385}. The associated isotropic spectral density  is given by:
\begin{equation} \label{stein1}
\widehat{{\cal M}}_{\nu,\beta}(z)= \frac{\Gamma(\nu+d/2)}{\pi^{d/2} \Gamma(\nu)}
\frac{\beta^d}{(1+\beta^2z^2)^{\nu+d/2}}
, \qquad z \ge 0.
\end{equation}
 When $\nu=m+1/2$ for $m$ a nonnegative integer, then
 $ {\cal M}_{\nu,\beta}$ factors into the product of a negative exponential with a polynomial of degree $m$.  
  For instance, $m=0$ and $m=1$
   correspond, respectively,
   to $ {\cal M}_{1/2,\beta}(r)= \exp(-r/\beta)$ and $ {\cal M}_{3/2,\beta}(r)= \exp(-r/\beta)(1+ r/\beta )$ (see Table \ref{tab1}).
Another relevant fact is that a reparametrized version of the Mat{\'e}rn model converges to the square exponential (or Gaussian) correlation model:
\begin{equation}
\label{convergencetogaussian}
 {\cal M}_{\nu, \beta/(2 \sqrt{\nu})}(r) \xrightarrow[\nu \to \infty]{}  \exp(- r^2/\beta^2),
\end{equation} with convergence being uniform on any compact set of $\R^d$. 

\subsection{{\bf The Generalized Wendland Family }}

The generalized Wendland family  of isotropic correlation functions \citep[][with the references therein]{Bevilacqua:20189} is defined for $\nu>0$ as
\begin{equation} \label{WG2*}
{\cal GW}_{\nu,\mu,\beta}(r):= \begin{cases}  \frac{1}{B(2\nu,\mu+1)} \int_{r/\beta}^{1} u(u^2-(r/\beta)^2)^{\nu-1} (1-u)^{\mu}\,{\rm d}u  ,& 0 \leq r \leq \beta,\\ 0,&r > \beta, \end{cases}
\end{equation}
and for $\nu=0$ as the Askey function \citep{Askey:1973}:
\begin{equation} \label{WG21*}
{\cal GW}_{0,\mu,\beta}(r):= \begin{cases}  \left(1-\frac{r}{\beta}\right)^{\mu}  ,& 0 \leq r \leq \beta,\\ 0,&r > \beta. \end{cases}
\end{equation}
Arguments in \cite{Zastavnyi:2002} show that ${\cal GW}_{\nu,\mu,\beta}$ is positive definite in $\R^d$ for   $\mu\geq \lambda(d,\nu):= (d+1)/2+\nu$ and $\nu\geq 0$ and for a positive compact support parameter $\beta$.
Using results in \cite{H2012},
an alternative useful representation of the generalized Wendland function for $\nu > 0 $, in terms of hypergeometric Gaussian function ${}_2F_1$, is given by:
\begin{equation} \label{WG4*}
	{\cal GW}_{\nu,\mu, \beta}(r)=
	\begin{cases}
	K \left( 1- \left(\frac{r}{\beta} \right )^2\right)^{\nu+\mu}
	    {}_2F_1\left(\frac{\mu}{2},\frac{\mu+1}{2};\nu+\mu+1;1- \left(\frac{r}{\beta} \right )^2 \right)
	& 0\leq r \leq \beta\\
	0 & r > \beta ,\end{cases}
	\end{equation}
	with $K=\frac{\Gamma(\nu)\Gamma(2\nu+\mu+1)}{\Gamma(2\nu)\Gamma(\nu+\mu+1)2^{\mu+1}}$.
The associated isotropic spectral density for $\nu \ge 0$ is given by  the following \citep{Bevilacqua:20189}:
\begin{equation} \label{WG2s}
\widehat{{\cal GW}}_{\nu,\mu, \beta}(z)=L\beta^{d}\mathstrut_1 F_2\Big( \lambda(d,\nu); \lambda(d,\nu)+\frac{\mu}{2}, \lambda(d,\nu)+\frac{\mu+1}{2} ;-\frac{(z\beta)^{2}}4\Big), \quad  z > 0,
\end{equation}
where $L={2^{-d}\pi^{-\frac{d}{2}}\Gamma(\mu+2\nu+1)\Gamma(2\nu+d)\Gamma(\nu)}/\left (  {\Gamma\left(\nu+{d}/{2}\right)\Gamma(\mu+2\nu+d+1)\Gamma(2\nu)} \right )$. Note that the
spectral density  is well-defined   when $\nu=0$ as  $\lim_{x\to 0} \Gamma(\nu)/\Gamma(2\nu)=2$.

The functions ${}_2F_1$ and ${}_1F_2$ are special cases of the generalized hypergeometric functions $\mathstrut_p F_q$ \citep{Abra:Steg:70} given by:
$${}_pF_q(a_1,a_2,\ldots,a_p;b_1,b_2,\ldots,b_q;x):=\sum\limits_{k=0}^{\infty}
\frac{(a_1)_k,(a_2)_k,\ldots,(a_p)_k}{(b_1)_k,(b_2)_k,\ldots,(b_q)_k}\frac{x^k}{k!}\;\;\;\text{for}\;\;\;p,q=0,1,2,\ldots$$
and $(a)_{k}:=  \Gamma(a+k)/\Gamma(a)$, for $k\in \N \cup \{0\} $, is the Pochhammer symbol.
 Similarly to the  Mat{\'e}rn  model, closed-formed solutions can be obtained   when  $\nu=k$ is a nonnegative integer \citep{Gneiting:2002b}.
In particular in this case ${\cal GW}_{\nu,\mu, \beta}$
 factors into the product of the Askey function ${\cal GW}_{0,\mu+k,\beta}$ in Equation (\ref{WG21*}), with a polynomial of degree $k$ (see Table 1).
 Other closed form solutions can be obtained when $\nu=k+0.5$, using some results in \cite{Schaback:2011}.


More importantly, the generalized Wendland  model, as in the Mat{\'e}rn   case,  allows for parameterization in a continuous  fashion  of the
mean squared differentiability  of the underlying Gaussian RF and its associated sample pathsthrough the smoothness parameter $\nu$.
 Specifically,
the sample paths of the generalized-Wendland model are $k$ times differentiable, in any direction, if
and only if $\nu>k-0.5$.
A thorough comparison between the generalized Wendland  and Mat{\'e}rn models with respect to indexing mean squared differentiability is provided by \cite{Bevilacqua:20189}.



\subsection{{\bf Equivalence of Gaussian Measures}}

Denote by $P_i$, $i=0,1$, two probability measures defined on the same
 measurable space $\{\Omega, \cal F\}$. $P_0$ and $P_1$ are called equivalent (denoted $P_0 \equiv P_1$) if $P_1(A)=1$ for any $A\in \cal F$ implies $P_0(A)=1$, and vice versa. For a RF  $Z=\{ Z(\ss), \ss \in D \subset \R^d \}$,  we restrict the event $A$ to the $\sigma$-algebra generated by $Z$ and we  emphasize this restriction by saying that the
two measures are equivalent on the paths of $Z$.

The equivalence of Gaussian measures is a  fundamental tool when studying Gaussian RFs under fixed domain asymptotics and has important implications
on both  estimation and  prediction. 
For instance, using equivalence of Gaussian measures,  \cite{Zhang:2004} has shown that, for the Mat{\'e}rn covariance model $\sigma^2{\cal M}_{\nu,\beta}$, variance and scale cannot be   consistently estimated (for fixed $\nu$). Instead, the parameter $\sigma^2\beta^{-2\nu}$ can be estimated consistently. Similarly, for the generalized Wendland covariance model $\sigma^2{\cal GW}_{\nu,\mu, \beta}$, \cite{Bevilacqua:20189} have shown  that the parameter $\sigma^2 \mu \beta^{-(2\nu+1)} $
can be estimated consistently.
We call those parameters that can be estimated consistently microergodic. Another important implication of the equivalence of Gaussian measures is that the {\em true} (under $P_0$) and  misspecified
(under $P_1$) kriging prediction attain the same asymptotic prediction efficiency \citep{Stein:1999} when $P_0 \equiv P_1$.

Henceforth  we write $P(\sigma^2 \phi )$ for zero-mean Gaussian measures with variance parameter  $\sigma^2$ and an  isotropic correlation function $\phi$.
The following result is taken from \cite{Bevilacqua:20189}   and provides  sufficient conditions for the equivalence of two Gaussian measures
having Mat{\'e}rn and generalized Wendland correlation functions and sharing the same variance.
\begin{theorem}\label{ThmX}
For given $\nu_0  \geq 1/2$ and $\nu_1 \geq 0$,   let $P(\sigma^2 {\cal M}_{\nu_0,\beta})$ and $P(\sigma^2 {\cal GW}_{\nu_1,\mu, \delta})$
  be two zero-mean Gaussian measures. If $\nu_0=\nu_1+1/2$, $\mu > \lambda(d,\nu_1)+d/2$, and
\begin{equation}\label{true}
 \delta
=\beta \left(\frac{\Gamma(\mu+2\nu_1+1)} {\Gamma(\mu)}\right)^{\frac{1}{1+2\nu_1}},
\end{equation}
then for any
bounded infinite set $D\subset \R^d$, $d=1, 2, 3$, $P(\sigma^2 {\cal M}_{\nu_0,\beta})
  \equiv P(\sigma^2 {\cal GW}_{\nu_1,\mu, \delta})$ on the paths of $Z$.
\end{theorem}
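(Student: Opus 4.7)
The plan is to apply the classical spectral criterion for the equivalence of zero-mean stationary Gaussian measures on bounded subsets $D$ of $\R^{d}$ with $d\leq 3$ (Stein, 1999, Chapter 4; Skorokhod and Yadrenko): given two isotropic spectral densities $\widehat{\phi}_{0}$ and $\widehat{\phi}_{1}$, a sufficient condition for equivalence is that $\widehat{\phi}_{0}(z)/\widehat{\phi}_{1}(z)\to 1$ as $z\to\infty$, with the relative error $(\widehat{\phi}_{0}-\widehat{\phi}_{1})/\widehat{\phi}_{1}$ lying in $L^{2}(z^{d-1}\,dz)$ on a neighborhood of infinity. Since the two models share $\sigma^{2}$, everything reduces to the tail behavior of $\widehat{\mathcal{M}}_{\nu_{0},\beta}$ in (\ref{stein1}) and $\widehat{\mathcal{GW}}_{\nu_{1},\mu,\delta}$ in (\ref{WG2s}), under the parameter identifications $\nu_{0}=\nu_{1}+1/2$ and (\ref{true}).

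First, reading off (\ref{stein1}) with $\nu_{0}=\nu_{1}+1/2$ gives the trivial expansion
\[
\widehat{\mathcal{M}}_{\nu_{0},\beta}(z)=\frac{\Gamma(\nu_{0}+d/2)}{\pi^{d/2}\Gamma(\nu_{0})}\,\beta^{-2\nu_{0}}\,z^{-(2\nu_{1}+1+d)}\bigl\{1+O(z^{-2})\bigr\},\qquad z\to\infty.
\]
The central step is to verify that $\widehat{\mathcal{GW}}_{\nu_{1},\mu,\delta}(z)$ has the same polynomial decay $z^{-(2\nu_{1}+1+d)}$ with a matching leading constant. For this I would substitute $a=\lambda(d,\nu_{1})$, $b_{1}=\lambda(d,\nu_{1})+\mu/2$, $b_{2}=\lambda(d,\nu_{1})+(\mu+1)/2$ into (\ref{WG2s}) and apply the classical large-argument asymptotic expansion of ${}_{1}F_{2}(a;b_{1},b_{2};-x)$ (Luke, 1969). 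That expansion decomposes into an algebraic contribution of order $x^{-a}$, with prefactor $\Gamma(b_{1})\Gamma(b_{2})/\{\Gamma(b_{1}-a)\Gamma(b_{2}-a)\}$, plus an oscillatory remainder of strictly smaller polynomial amplitude whenever $\mu>\lambda(d,\nu_{1})+d/2$. Reinstating the variable $z$ through the argument $-(z\delta)^{2}/4$ then yields
\[
\widehat{\mathcal{GW}}_{\nu_{1},\mu,\delta}(z)=\tilde{C}(\nu_{1},\mu)\,\delta^{-(2\nu_{1}+1)}\,z^{-(2\nu_{1}+1+d)}\bigl\{1+o(1)\bigr\},\qquad z\to\infty,
\]
with $\tilde{C}(\nu_{1},\mu)$ a computable ratio of Gamma functions coming from $L$ in (\ref{WG2s}) together with the $\Gamma$-prefactor just described.

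Equating the two leading constants, and simplifying the resulting ratio of Gamma functions via the Legendre duplication formula, collapses the identity to $\delta^{2\nu_{1}+1}=\beta^{2\nu_{1}+1}\,\Gamma(\mu+2\nu_{1}+1)/\Gamma(\mu)$, which is precisely (\ref{true}). The remaining piece of the criterion --- square integrability of the relative error against $z^{d-1}\,dz$ --- then follows because, once the leading term has been cancelled by the correct choice of $\delta$, the residual is $O(z^{-2})$ plus oscillatory terms whose amplitude, by the margin imposed through $\mu>\lambda(d,\nu_{1})+d/2$, decays fast enough to be square-integrable in any dimension $d\leq 3$.

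The main technical obstacle is carrying out the ${}_{1}F_{2}$ asymptotic expansion cleanly enough to isolate the algebraic leading coefficient together with explicit control on the subdominant oscillatory term, and then verifying, through Gamma-function bookkeeping, that the matching condition is exactly the elegant reparameterization (\ref{true}). Once this is done, the tail expansion of $\widehat{\mathcal{M}}_{\nu_{0},\beta}$, the invocation of the spectral equivalence criterion, and the integrability check for $d\leq 3$ are essentially routine.
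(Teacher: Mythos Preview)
The paper does not actually prove this theorem: it is quoted verbatim as a result ``taken from \cite{Bevilacqua:20189}'' and no argument is supplied. Your proposed route via the spectral equivalence criterion, together with the large-argument asymptotics of ${}_{1}F_{2}$ to extract the tail $z^{-(2\nu_{1}+1+d)}$ of $\widehat{\mathcal{GW}}_{\nu_{1},\mu,\delta}$ and match it against the Mat\'ern tail, is exactly the standard machinery and is how the cited reference establishes the result; so your plan is correct and aligned with the original source, even though there is nothing in the present paper to compare against.

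One point worth tightening: the square-integrability check hinges on knowing the precise polynomial amplitude of the oscillatory piece in the ${}_{1}F_{2}$ expansion, namely that it behaves like $z^{\theta}\cos(z\delta+\text{phase})$ with exponent $\theta$ governed by $a-b_{1}-b_{2}$, and it is the inequality $\mu>\lambda(d,\nu_{1})+d/2$ that forces $\theta$ low enough for $(z^{\theta}/z^{-(2\nu_{1}+1+d)})^{2}z^{d-1}$ to be integrable near infinity when $d\le 3$. You have identified this correctly in spirit, but in a fully written proof you would want to record that exponent explicitly rather than leave it as ``decays fast enough.''
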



\section{A Class of Isotropic Correlations that Unifies Compact and Global Supports} \label{sec3}
This Section provides the main theoretical result of the paper.
Theorem \ref{ThmX} is 
the crux for the subsequent construction. Using Equation (\ref{true}),
we now define the mapping $\delta_{\nu,\mu,\beta}$ through the identity
\begin{equation}
\label{repa} \delta_{\nu,\mu,\beta} = \beta \left(\frac{\Gamma(\mu+2\nu +1)} {\Gamma(\mu)}\right)^{\frac{1}{1+2\nu}},
\end{equation} where $\nu \ge 0$, $\beta>0$ and $\mu \ge 0$ 
and we  define the  $\varphi_{\nu,\mu,\beta}$ class of isotropic correlation models as:
\begin{equation}\label{newbb}
\varphi_{\nu,\mu,\beta}(r) := {\cal GW}_{\nu,\mu,\delta_{\nu,\mu,\beta}}(r), \qquad r \ge 0.
\end{equation}
The  model $\varphi_{\nu,\mu,\beta}$ is a  reparameterization of the generalized Wendland family and, as a consequence, it  is positive definite in $\R^d$ under the conditions $\mu\geq \lambda(d,\nu)$, $\beta>0$, $\nu\geq0$.
 Under this parameterization, the compact support is jointly specified by   $\nu$, $\beta$ and $\mu$, and
 basic properties of the Gamma function show that $\delta_{\nu,\cdot,\beta}$,  $\delta_{\cdot,\mu,\beta}$ and    $\delta_{\nu,\mu,\cdot}$ are  strictly increasing  on $[\lambda(d,\nu),\infty)$, $[0,\infty)$
 and $(0,\infty)$
 respectively.
  Hereafter, we  use  $\varphi_{\nu,\mu,\beta}$ or  ${\cal GW}_{\nu,\mu,\delta_{\nu,\mu,\beta}}$ depending on the context and whenever there is no confusion.

\begin{table}[t!]
\caption{The $\varphi_{\nu,\mu,\beta}$ model
with compact support $\delta_{\nu,\mu,\beta}$ (see Equation \ref{repa})
 for $\nu=0,1,2, 3$ and the associated limit case when $\mu \to \infty$  {\em i.e.}, the Mat{\'e}rn  model
  ${\cal M}_{\nu+1/2,\beta}$.}
\label{tab1}
\begin{center}
\scalebox{0.85}{
\begin{tabular}{|c|l|c|l}
\hline
~$\nu$~& $\varphi_{\nu,\mu,\beta}(r)$  &$ {\cal M}_{\nu+1/2,\beta}(r)$  \\
\hline
\hline
$0$ & $\left(1-\frac{r}{\delta_{0,\mu,\beta}} \right)^{\mu}_{+}$ &    $e^{-\frac{r}{\beta}}$ \\
\hline
$1$ & $\left(1-\frac{r}{\delta_{1,\mu,\beta}}\right)^{\mu+1}_{+}\left(1+\frac{r}{\delta_{1,\mu,\beta}}(\mu+1)\right)$  & $ e^{-\frac{r}{\beta}}(1+\frac{r}{\beta})$  \\
\hline
$2$ & $\left(1-\frac{r}{\delta_{2,\mu,\beta}}\right)^{\mu+2}_{+}\left(1+\frac{r}{\delta_{2,\mu,\beta}}(\mu+2)+\left(\frac{r}{\delta_{2,\mu,\beta}}\right)^2(\mu^2+4\mu+3)\frac{1}{3}\right)$  &$ e^{-\frac{r}{\beta}}(1+\frac{r}{\beta}+\frac{r^2}{3\beta^2})$\\
\hline
\multirow{2}{*}{$3$} & $\left(1-\frac{r}{\delta_{3,\mu,\beta}}\right)^{\mu+3}_{+}
\big( 1+\frac{r}{\delta_{3,\mu,\beta}}(\mu+3)+\left(\frac{r}{\delta_{3,\mu,\beta}}\right)^2(2\mu^2+12\mu+15)\frac{1}{5}\qquad$&\multirow{2}{*}{$ e^{-\frac{r}{\beta}}(1+\frac{r}{\beta}+\frac{2r^2}{5\beta^2}+\frac{r^3}{15\beta^3})$}\\
& $\hspace*{2.8cm}+\left(\frac{r}{\delta_{3,\mu,\beta}}\right)^3(\mu^3+9\mu^2+23\mu+15)\frac{1}{15}\big)$&\\
\hline
\end{tabular}}
\end{center}
\end{table}

We now show that  this  new parameterization of the generalized Wendland model  is  very flexible, as it allows us to consider, under the same umbrella,
 compactly and globally supported   correlation functions.
 In particular, we show that  the Mat{\'e}rn family ${\cal M}_{\nu+1/2,\beta}$ is a special case of the $\varphi_{\nu,\mu,\beta}$ model when $\mu \to \infty$.
 Table $\ref{tab1}$ is taken from  \cite{Bevilacqua:20189}  and it reports the  $\varphi_{\nu,\mu,\beta}$ correlation model for the special cases $\nu=0,1, 2,  3$ and its associated limit case when $\mu \to \infty$
 i.e., the Mat{\'e}rn  correlation model ${\cal M}_{\nu+1/2,\beta}$.

 Two preliminary results are needed for the proof of our main result. Our first preliminary result is of its own interest and  establishes the convergence of the spectral density associated with the $\varphi_{\nu,\mu,\beta}$ model to the spectral density of the  Mat{\'e}rn family ${\cal M}_{\nu+1/2,\beta}$
when $\mu \to \infty$,
uniformly for $z$ in an arbitrary bounded subinterval of  the positive real line.
\begin{theorem}\label{the3}
For $\nu \geq 0$, let $\widehat{\varphi}_{\nu,\mu,\beta}$ be the isotropic spectral density of the   correlation function  $\varphi_{\nu,\mu,\beta}$   defined in Equation (\ref{newbb}), and determined according to (\ref{WG2s}).   Let $\widehat{{\cal M}}_{\nu+0.5,\beta}$ be
the isotropic spectral density of the     correlation function  ${\cal M}_{\nu+1/2,\beta}$  as defined through (\ref{stein1}).
 Then,
\begin{equation} \label{llim}
\lim_{\mu\to\infty}\widehat{\varphi}_{\nu,\mu,\beta}(z)=\widehat{{\cal M}}_{\nu+0.5,\beta}(z),\quad \nu\geq 0
\end{equation}
uniformly for $z$
in an arbitrary bounded subinterval of  the positive real line.
\end{theorem}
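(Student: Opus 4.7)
My plan is to work directly with the series representation of ${}_1F_2$ given in (\ref{WG2s}), evaluated at $\beta$ replaced by $\delta_{\nu,\mu,\beta}$, and carry out a three-part asymptotic analysis as $\mu \to \infty$: (i) determine the leading behaviour of $\delta_{\nu,\mu,\beta}$; (ii) evaluate the limit of the multiplicative prefactor $L\delta_{\nu,\mu,\beta}^d$; (iii) pass the limit inside the hypergeometric series and match the result against (\ref{stein1}).

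For (i) and (ii), I would invoke the classical Gamma asymptotic $\Gamma(\mu+a)/\Gamma(\mu+b)=\mu^{a-b}(1+O(1/\mu))$, which immediately gives $\delta_{\nu,\mu,\beta}=\beta\mu(1+O(1/\mu))$ from (\ref{repa}), and also $\Gamma(\mu+2\nu+1)/\Gamma(\mu+2\nu+d+1)\sim \mu^{-d}$. The $\mu^d$ appearing in $\delta^d$ cancels the $\mu^{-d}$ coming from $L$; then a double application of the Legendre duplication formula $\Gamma(2z)=2^{2z-1}\Gamma(z)\Gamma(z+1/2)/\sqrt{\pi}$, applied both to $\Gamma(2\nu)$ and to $\Gamma(2\nu+d)$, collapses the remaining constants to
\[
L\,\delta_{\nu,\mu,\beta}^d \;\xrightarrow[\mu\to\infty]{}\; \frac{\Gamma(\nu+1/2+d/2)\,\beta^d}{\pi^{d/2}\,\Gamma(\nu+1/2)},
\]
which is precisely the Mat\'ern prefactor in (\ref{stein1}).

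For (iii), the termwise limit is straightforward: since $\delta_{\nu,\mu,\beta}^{2k}\sim (\beta\mu)^{2k}$ and $(\lambda(d,\nu)+\mu/2)_k(\lambda(d,\nu)+(\mu+1)/2)_k\sim (\mu/2)^{2k}$, the powers of $\mu$ cancel and the $k$th term of the series converges to $(\lambda(d,\nu))_k(-\beta^2 z^2)^k/k!$, whose formal sum is $(1+\beta^2 z^2)^{-\lambda(d,\nu)}=(1+\beta^2 z^2)^{-(\nu+1/2+d/2)}$, the Mat\'ern $z$-shape. For $z$ in any compact subset of $[0,\beta^{-1})$, the bounds $(\lambda+\mu/2)_k(\lambda+(\mu+1)/2)_k\ge (\mu/2)^{2k}$ and (via Gautschi) $\delta_{\nu,\mu,\beta}\le \beta(\mu+2\nu+1)$ provide a dominating summable majorant $C(\lambda(d,\nu))_k(\beta(1+\varepsilon)\sup|z|)^{2k}/k!$, so dominated convergence delivers uniform convergence of the series.

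The main obstacle lies in extending the conclusion to compact subsets of $(0,\infty)$ that include values of $z$ with $\beta z \ge 1$: the candidate limit $(1+\beta^2z^2)^{-\lambda(d,\nu)}$ no longer has a convergent power series, so no summable term-by-term majorant exists. To handle this regime, I would employ the Laplace-type integral representation
\[
{}_1F_2(\lambda; b_1, b_2; w) \;=\; \frac{1}{\Gamma(\lambda)}\int_0^\infty t^{\lambda-1} e^{-t}\; {}_0F_2(b_1, b_2; wt)\, dt,
\]
which follows from $(\lambda)_k=\Gamma(\lambda+k)/\Gamma(\lambda)$ and Fubini. The \emph{inner} series now has numerator $(wt)^k$ against $(b_1)_k(b_2)_k k!$, so termwise asymptotics yield ${}_0F_2(\lambda+\mu/2,\lambda+(\mu+1)/2; -(z\delta)^2 t/4)\to e^{-\beta^2 z^2 t}$ uniformly on compact $t$-sets (the limit series $\sum_k(-\beta^2 z^2 t)^k/k!$ is entire, dispelling the radius-of-convergence obstruction). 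Combining the termwise limit with algebraic-decay bounds on ${}_0F_2(b_1,b_2;-y)$ as $y\to\infty$ (which, together with the Gaussian weight $e^{-t}$, provide an $L^1(dt)$ majorant independent of $\mu$), dominated convergence in $t$ gives
\[
{}_1F_2 \to \frac{1}{\Gamma(\lambda(d,\nu))}\int_0^\infty t^{\lambda(d,\nu)-1}e^{-t(1+\beta^2 z^2)}dt = (1+\beta^2z^2)^{-\lambda(d,\nu)},
\]
valid for every $z>0$ and uniform on compacts. Multiplying by the prefactor limit from step (ii) yields (\ref{llim}).
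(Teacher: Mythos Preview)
Your steps (i)--(ii) and the termwise-limit part of (iii) are exactly the paper's argument: expand the ${}_1F_2$ series, use Stirling on the Gamma ratios and the duplication formula to reduce the prefactor, and identify the limiting series with the binomial expansion of $(1+\beta^2z^2)^{-\lambda(d,\nu)}$. The paper stops there: it invokes dominated convergence for the series without exhibiting a majorant, obtains pointwise convergence, and then simply asserts that pointwise convergence of continuous functions is uniform on bounded intervals. It does not split into the regimes $\beta z<1$ and $\beta z\ge 1$ at all.

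You are more scrupulous than the paper in recognising that a summable termwise majorant can only exist when $\beta\,\sup|z|<1$, since the limiting series diverges otherwise; your Laplace representation via ${}_0F_2$ is a reasonable device to sidestep that obstruction. However, the step ``algebraic-decay bounds on ${}_0F_2(b_1,b_2;-y)$ \dots\ provide an $L^1(dt)$ majorant independent of $\mu$'' is not justified as written. Here $b_1,b_2=\lambda(d,\nu)+\mu/2+O(1)\to\infty$ and the argument $-(z\delta_{\nu,\mu,\beta})^2t/4\sim -\tfrac14(z\beta)^2\mu^2 t$ also diverges, so you are in a double-scaling regime where the standard large-argument asymptotic for ${}_0F_2$ at \emph{fixed} parameters does not apply. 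A crude termwise bound gives only $|{}_0F_2|\le \exp\bigl((1+\varepsilon)^2\beta^2z^2 t\bigr)$, which against $e^{-t}$ is integrable in $t$ precisely when $\beta z<1$ --- the same restriction you were trying to escape. To close the argument you would need a bound on ${}_0F_2(b_1,b_2;-y)$ that is uniform over $b_1,b_2\ge b_0$ and all $y\ge 0$ (for instance via an alternating-series estimate on the tail once $k\ge k_0(y,b_1,b_2)$, combined with a separate treatment of the first $k_0$ terms), or else a different dominated-convergence strategy altogether.
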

\begin{proof}
We provide a constructive proof.
We first calculate the spectral density $\widehat{\varphi}_{\nu,\mu,\beta}$ associated with $\widehat{\varphi}_{\nu,\mu,\beta}$. To do so, we use 
Equation (7), in concert with basic properties of Fourier calculus to obtain
\begin{footnotesize}
\begin{eqnarray}\label{ec1}
\widehat{\varphi}_{\nu,\mu,\beta}(z)=\frac{2^{-d}\pi^{-\frac{d}{2}}\Gamma(\mu+2\nu+1)\Gamma(2\nu+d)\Gamma(\nu)\delta^d_{\nu,\mu,\beta}}{\Gamma\left(\nu+\frac{d}{2}\right)\Gamma(\mu+2\nu+d+1)\Gamma(2\nu)} \mathstrut_1 F_2\left( \lambda(d,\nu); \lambda(d,\nu)+\frac{\mu}{2}, \lambda(d,\nu)+\frac{\mu+1}{2} ;-\frac{(z\delta_{\nu,\mu,\beta})^{2}}4\right).
\end{eqnarray}
\end{footnotesize}
We use the duplication formula for the Gamma function to obtain $\Gamma(x)\Gamma\left(x+1/2\right)=2^{1-2x}\Gamma(2x)$.  We now invoke the series expansion of hypergeometric function $\mathstrut_1 F_2$, and since $ \lambda(d,\nu)=0.5(d+1)+\nu$, we obtain
\begin{footnotesize}
\begin{eqnarray}\label{ec2}
\widehat{\varphi}_{\nu,\mu,\beta}(z)&=&\frac{2^{-d}\pi^{-\frac{d}{2}}\Gamma(\mu+2\nu+1)\Gamma(2\nu+d)\Gamma(\nu)\delta^d_{\nu,\mu,\beta}}{\Gamma\left(\nu+\frac{d}{2}\right)\Gamma(\mu+2\nu+d+1)\Gamma(2\nu)} \sum\limits_{n=0}^{\infty}\frac{\left(\frac{d+1}{2}+\nu\right)_n\delta^{2n}_{\nu,\mu,\beta}}{n!\left(\frac{d+\mu+1}{2}+\nu\right)_n\left(\frac{d+\mu}{2}+\nu+1\right)_n}\left(\frac{-z^2}{4}\right)^n\nonumber\\
&=&2^{-d}\pi^{-\frac{d}{2}}\sum\limits_{n=0}^{\infty}\frac{\Gamma(2\nu+2n+d)\Gamma(\mu+2\nu+1)\Gamma(\nu)\delta^{2n+d}_{\nu,\mu,\beta}}{n!\Gamma(2\nu)\Gamma(\mu+2\nu+2n+d+1)\Gamma\left(\nu+\frac{d}{2}+n\right)}\left(\frac{-z^2}{4}\right)^n\nonumber\\
&=&2^{-d}\pi^{-\frac{d}{2}}\sum\limits_{n=0}^{\infty}\omega_n(\nu)\left(\frac{-z^2}{4}\right)^n,
\end{eqnarray}
\end{footnotesize}
where
$$\omega_n(\nu):=\frac{\Gamma(2\nu+2n+d)\Gamma(\mu+2\nu+1)\Gamma(\nu)\delta^{2n+d}_{\nu,\mu,\beta}}{n!\Gamma(2\nu)\Gamma(\mu+2\nu+2n+d+1)\Gamma\left(\nu+\frac{d}{2}+n\right)}.$$
The ratio test shows that $\sum\limits_{n=0}^{\infty}\omega_n(\nu)\left(\frac{-z^2}{4}\right)^n$ is absolutely convergent for all $z \in \R^+$.
As a consequence, by the dominated convergence Theorem, we can take the limit as $\mu\to\infty$ inside the infinite sum in Equation (\ref{ec2}), giving
\begin{eqnarray}\label{ec3}
\lim_{\mu\to\infty}\widehat{\varphi}_{\nu,\mu,\beta}(z)&=&2^{-d}\pi^{-\frac{d}{2}}\sum\limits_{n=0}^{\infty}\lim_{\mu\to\infty}\omega_n(\nu)\left(\frac{-z^2}{4}\right)^n.
\end{eqnarray}
By the Stirling formula we have $\frac{\Gamma(x+a)}{\Gamma(x+b)}\sim x^{a-b}$, and using the definition of the Pochhammer symbol \citep{Abra:Steg:70}, we have
\begin{footnotesize}
\begin{eqnarray}\label{ec4}
\omega_n(\nu)&=&\frac{\Gamma(2\nu+2n+d)\Gamma(\mu+2\nu+1)\Gamma(\nu)\delta^{2n+d}_{\nu,\mu,\beta}}{n!\Gamma(2\nu)\Gamma(\mu+2\nu+2n+d+1)\Gamma\left(\nu+\frac{d}{2}+n\right)}\nonumber\\
&=&\frac{2^{d+2n}\Gamma\left(\frac{d+1}{2}+\nu\right)\Gamma(\mu+2\nu+1)}{n!\Gamma(\mu+2\nu+2n+d+1)\Gamma\left(\nu+\frac{1}{2}\right)}\left[\beta\left(\frac{\Gamma(\mu+2\nu+1)}{\Gamma(\mu)}\right)^{\frac{1}{1+2\nu}}\right]^{d+2n}\nonumber\\
&\sim&\frac{2^{d+2n}\Gamma\left(\frac{d+1}{2}+\nu\right)\left(\frac{d+1}{2}+\nu\right)_n\beta^{d+2n}}{n!\Gamma\left(\nu+\frac{1}{2}\right)}.
\end{eqnarray}
\end{footnotesize}
Combining Equations (\ref{ec3}) and (\ref{ec4}), we obtain
\begin{eqnarray}\label{ec5}
\lim_{\mu\to\infty}\widehat{\varphi}_{\nu,\mu,\beta}(z)&=&\frac{\pi^{-\frac{d}{2}}\Gamma\left(\frac{d+1}{2}+\nu\right)\beta^d}{\Gamma\left(\nu+\frac{1}{2}\right)}\sum\limits_{n=0}^{\infty}\frac{\left(\frac{d+1}{2}+\nu\right)_n}{n!}[-(z\beta)^2]^n.
\end{eqnarray}
Finally, considering the convergent series $\sum\limits_{n=0}^{\infty}\frac{(a)_n}{n!}(-x)^n=(1+x)^{-a}$ we obtain
\begin{equation*}
\lim_{\mu\to\infty}\widehat{\varphi}_{\nu,\mu,\beta}(z)=\frac{\pi^{-\frac{d}{2}}\Gamma\left(\frac{d+1}{2}+\nu\right)\beta^d}{\Gamma\left(\nu+\frac{1}{2}\right)(1+z^2\beta^2)^{\frac{d+1}{2}+\nu}}
=\widehat{{\cal M}}_{\nu+0.5,\beta}(z).
\end{equation*}
This proves pointwise convergence of a sequence of continuous functions, which is
necessarily uniform on a bounded interval.


\end{proof}

The following result will be useful for the main result in Theorem 3.
\begin{lemma}\label{lem22}
Let $\widehat{\varphi}_{\nu,\mu,\beta}$ be the  spectral density of the   isotropic correlation function  defined in Equation (\ref{newbb}).
Let $\widehat{{\cal M}}_{\nu,\beta}$ be
the isotropic spectral density of the Matérn  isotropic correlation function as defined through (\ref{stein1}).Then,
\begin{equation} \label{lemeq}
\int_{0}^{\infty}z^{d-1} \widehat{\varphi}_{\nu,\mu,\beta}(z){\rm d} z=\int_{0}^{\infty}z^{d-1}\widehat{{\cal M}}_{\nu+0.5,\beta}(z) {\rm d}  z=\frac{\Gamma\left(\frac{d}{2}\right)}{2\pi^{d/2}}.
\end{equation}
\end{lemma}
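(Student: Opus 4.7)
The plan is to prove both equalities in (\ref{lemeq}) by the same observation: for any isotropic correlation function $\phi$ on $\R^d$ with $\phi(0)=1$ and integrable spectral density $\widehat\phi$, the integral $\int_0^\infty z^{d-1}\widehat\phi(z)\,{\rm d}z$ equals the universal constant $\Gamma(d/2)/(2\pi^{d/2})$, independently of $\phi$. The identity (\ref{lemeq}) is then immediate by applying this fact to $\phi={\cal M}_{\nu+1/2,\beta}$ and to $\phi=\varphi_{\nu,\mu,\beta}$.

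To establish the universal identity, I would read (\ref{FT}) as the radial part of the $d$-dimensional Fourier transform of the isotropic function $x\mapsto\phi(\|x\|)$ on $\R^d$. Fourier inversion then yields
\begin{equation*}
\phi(\|x\|)=\int_{\R^d}e^{i x\cdot\xi}\,\widehat\phi(\|\xi\|)\,{\rm d}\xi.
\end{equation*}
Setting $x=0$ and passing to spherical coordinates in $\R^d$, using that the unit sphere $S^{d-1}$ has surface area $\omega_{d-1}=2\pi^{d/2}/\Gamma(d/2)$, I obtain
\begin{equation*}
1=\phi(0)=\int_{\R^d}\widehat\phi(\|\xi\|)\,{\rm d}\xi=\frac{2\pi^{d/2}}{\Gamma(d/2)}\int_0^\infty z^{d-1}\widehat\phi(z)\,{\rm d}z,
\end{equation*}
which rearranges to the claimed formula.

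The delicate point is justifying integrability of the two spectral densities against $z^{d-1}{\rm d}z$, which is what licenses Fourier inversion. For the Mat\'ern family this is immediate from (\ref{stein1}), which gives polynomial decay $\widehat{\cal M}_{\nu+1/2,\beta}(z)=O(z^{-(d+2\nu+1)})$ at infinity; moreover, the substitution $t=\beta^2z^2$ reduces the Mat\'ern integral directly to a beta function identity yielding $\Gamma(d/2)/(2\pi^{d/2})$, which serves as a self-contained verification. For the reparameterized generalized Wendland, the positive definiteness statement recalled in the paper ensures $\widehat\varphi_{\nu,\mu,\beta}\geq0$, while continuity of $\varphi_{\nu,\mu,\beta}$ together with $\varphi_{\nu,\mu,\beta}(0)=1$ forces, via Bochner's theorem, the radial measure $\omega_{d-1}z^{d-1}\widehat\varphi_{\nu,\mu,\beta}(z)\,{\rm d}z$ to be a probability measure on $[0,\infty)$. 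This is exactly the finiteness required, and the equality $\int_0^\infty z^{d-1}\widehat\varphi_{\nu,\mu,\beta}(z)\,{\rm d}z=1/\omega_{d-1}$ is an immediate consequence. If one prefers a purely computational route, integrating the series (\ref{ec2}) term-by-term gives the same value, but it is considerably more tedious and obscures the reason the answer is independent of $\nu$, $\mu$ and $\beta$.
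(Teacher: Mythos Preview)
Your argument is correct and takes a genuinely different route from the paper. The paper computes both integrals explicitly and separately: for the Mat\'ern side it appeals to a standard tabulated integral (Gradshteyn--Ryzhik 3.241.4), and for the generalized Wendland side it substitutes the ${}_1F_2$ expression (\ref{ec1}), uses a Prudnikov identity for $\int_0^\infty z^{a-1}{}_1F_2(a_1;b_1,c_1;-z)\,{\rm d}z$, and then simplifies a ratio of many gamma functions via the duplication formula to reach $\Gamma(d/2)/(2\pi^{d/2})$. You bypass all of this by recognizing that the identity is simply Schoenberg/Bochner inversion evaluated at the origin: $\phi(0)=1$ forces $\omega_{d-1}\int_0^\infty z^{d-1}\widehat\phi(z)\,{\rm d}z=1$ for \emph{any} isotropic correlation with a spectral density, so the constant is universal and no parameters can appear. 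What your approach buys is conceptual transparency and generality---it makes clear that (\ref{lemeq}) has nothing to do with the particular structure of the two families---at the price of relying on abstract Fourier inversion rather than explicit special-function calculus. The paper's computational route is self-contained and perhaps more in the analytic spirit of the surrounding proofs of Theorems~\ref{the3} and~\ref{the4}, but it obscures the reason the two integrals agree. Your brief remarks on integrability (polynomial decay for Mat\'ern, Bochner plus continuity at $0$ for the compactly supported Wendland) are adequate to close the argument.
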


\begin{proof}
First, using Equation (2) 
 in the main document, in concert with  $3.241.4^{11}$ of \cite{Gradshteyn:Ryzhik:2007}, we obtain
\begin{footnotesize}
\begin{eqnarray} \label{id1}
\int_{0}^{\infty}z^{d-1}\widehat{{\cal M}}_{\nu+0.5,\beta}(z){\rm d}  z&=&\frac{\Gamma\left(\nu+\frac{d+1}{2}\right)\beta^d}{\pi^{d/2} \Gamma\left(\nu+\frac{1}{2}\right)}\int_{0}^{\infty}
\frac{z^{d-1}}{(1+\beta^2z^2)^{\nu+(d+1)/2}}{\rm d}z=\frac{\Gamma\left(\frac{d}{2}\right)}{2\pi^{d/2}}.
\end{eqnarray}
\end{footnotesize}
We now invoke (\ref{ec1}) to obtain
\begin{footnotesize}
\begin{eqnarray} \label{id1}
\int_{0}^{\infty}z^{d-1} \widehat{\varphi}_{\nu,\mu,\beta}(z) {\rm d}  z&=&\frac{2^{-d}\pi^{-\frac{d}{2}}\Gamma(\mu+2\nu+1)\Gamma(2\nu+d)\Gamma(\nu)\delta^d_{\nu,\mu,\beta}}{\Gamma\left(\nu+\frac{d}{2}\right)\Gamma(\mu+2\nu+d+1)\Gamma(2\nu)}\nonumber\\ &\times&\int_{0}^{\infty}z^{d-1}\mathstrut_1 F_2\left(\frac{d+1}{2}+\nu;\frac{d+\mu+1}{2}+\nu,\frac{d+\mu}{2}+\nu+1 ;-\frac{(z\delta_{\nu,\mu,\beta})^{2}}4\right) {\rm d}  z\nonumber\\
&=&\frac{2^{-d}\pi^{-\frac{d}{2}}\Gamma(\mu+2\nu+1)\Gamma(2\nu+d)\Gamma(\nu)\delta^d_{\nu,\mu,\beta}}{\Gamma\left(\nu+\frac{d}{2}\right)\Gamma(\mu+2\nu+d+1)\Gamma(2\nu)}I(d,\mu,\nu).
\end{eqnarray}
\end{footnotesize}
with
$$ I(d,\mu,\nu) := \int_{0}^{\infty}z^{d-1}\mathstrut_1 F_2\left(\frac{d+1}{2}+\nu;\frac{d+\mu+1}{2}+\nu,\frac{d+\mu}{2}+\nu+1 ;-\frac{(z\delta_{\nu,\mu,\beta})^{2}}4\right) {\rm d}  z. $$
Using the identity (8.4.48.1) of \cite{Prudnikov1986} given by $$\int\limits_{0}^{\infty}z^{a-1}\mathstrut_1 F_2\left(a_1;b_1,c_1;-z\right){\rm d} z=\frac{\Gamma(a)\Gamma(a_1-a)\Gamma(b_1)\Gamma(c_1)}{\Gamma(a_1)\Gamma(b_1-a)\Gamma(c_1-a)}$$ and with the change in variable $u=z^2\delta^2_{\nu,\mu,\beta}/4$, we obtain
\begin{footnotesize}
\begin{eqnarray} \label{id2}
I(d,\mu,\nu)&=&\frac{2^{d-1}}{\delta^d_{\nu,\mu,\beta}}\int_{0}^{\infty}u^{d/2-1}\mathstrut_1 F_2\left(\frac{d+1}{2}+\nu;\frac{d+\mu+1}{2}+\nu,\frac{d+\mu}{2}+\nu+1;-u\right){\rm d} u\nonumber\\
&=&\frac{2^{d-1}\Gamma\left(\frac{d}{2}\right)\Gamma\left(\nu+\frac{1}{2}\right)\Gamma\left(\frac{d+\mu+1}{2}+\nu\right)\Gamma\left(\frac{d+\mu}{2}+\nu+1\right)}
{\delta^d_{\nu,\mu,\beta}\Gamma\left(\frac{d+1}{2}+\nu\right)\Gamma\left(\frac{\mu+1}{2}+\nu\right)\Gamma\left(\frac{\mu}{2}+\nu+1\right)}.
\end{eqnarray}
\end{footnotesize}
Combining Equations (\ref{id1}), and (\ref{id2}) and using the duplication formula for the gamma function $\Gamma(x)\Gamma\left(x+\frac{1}{2}\right)=2^{1-2x}\Gamma(2x)$, we obtain
\begin{footnotesize}
\begin{eqnarray} \label{id3}
\int_{0}^{\infty}z^{d-1} \widehat{\varphi}_{\nu,\mu,\beta}(z){\rm d}z&=&\frac{\Gamma(\mu+2\nu+1)\Gamma(2\nu+d)\Gamma(\nu)\Gamma\left(\frac{d}{2}\right)\Gamma\left(\nu+\frac{1}{2}\right)\Gamma\left(\frac{d+\mu+1}{2}+\nu\right)\Gamma\left(\frac{d+\mu}{2}+\nu+1\right)}
{2\pi^{d/2}\Gamma\left(\nu+\frac{d}{2}\right)\Gamma(\mu+2\nu+d+1)\Gamma(2\nu)\Gamma\left(\frac{d+1}{2}+\nu\right)\Gamma\left(\frac{\mu+1}{2}+\nu\right)\Gamma\left(\frac{\mu}{2}+\nu+1\right)}\nonumber\\
&=&\frac{\Gamma\left(\frac{d}{2}\right)}{2\pi^{d/2}}.
\end{eqnarray}
\end{footnotesize}
The proof is completed.
\end{proof}

We are now able  to state the main result of this paper. We establish the uniform convergence of the $\varphi_{\nu,\mu,\beta}
$ correlation model
to the Mat{\'e}rn  ${\cal M}_{\nu+1/2,\beta}$  correlation model as $\mu \to \infty$.
\begin{theorem}\label{the4}
 Let $\varphi_{\nu,\mu,\beta}$ be the isotropic correlation function  defined in Equation (\ref{newbb}).
 Then,
\begin{equation} \label{llim}
\lim_{\mu\to\infty} \varphi_{\nu,\mu,\beta}(r)={\cal M}_{\nu+1/2,\beta}(r),\quad \nu\geq 0
\end{equation}
with uniform convergence for $r \in (0,\infty)$.
\end{theorem}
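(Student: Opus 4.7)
Plan: The strategy is to pass through the spectral domain: write both correlation functions as inverse Hankel transforms of their spectral densities, and then use Theorem 2 and Lemma 1 to convert pointwise spectral convergence into uniform convergence in the spatial variable. Concretely, every isotropic correlation function $\phi$ with spectral density $\widehat{\phi}$ is recovered via a representation of the form
$$\phi(r) \;=\; K_d\int_0^\infty \Omega_d(rz)\,\widehat{\phi}(z)\,z^{d-1}\,\mathrm{d}z,$$
for a dimensional constant $K_d$. Subtracting this representation for $\varphi_{\nu,\mu,\beta}$ and ${\cal M}_{\nu+1/2,\beta}$ and pulling the absolute value inside yields a pointwise-in-$r$ bound on $|\varphi_{\nu,\mu,\beta}(r)-{\cal M}_{\nu+1/2,\beta}(r)|$ in terms of the $z^{d-1}$-weighted $L^1$ distance between the two spectral densities, combined with the supremum of $|\Omega_d|$.

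The next step is to drive this $L^1$ distance to zero. Both spectral densities are nonnegative (by Bochner's theorem); by Theorem 2 they converge pointwise on $(0,\infty)$; and by Lemma 1 the weighted measures $z^{d-1}\widehat{\varphi}_{\nu,\mu,\beta}(z)\,\mathrm{d}z$ and $z^{d-1}\widehat{{\cal M}}_{\nu+1/2,\beta}(z)\,\mathrm{d}z$ carry identical (finite) total mass $\Gamma(d/2)/(2\pi^{d/2})$. These are exactly the hypotheses of Scheff\'e's lemma, which then delivers
$$\int_0^\infty z^{d-1}\left|\widehat{\varphi}_{\nu,\mu,\beta}(z)-\widehat{{\cal M}}_{\nu+1/2,\beta}(z)\right|\,\mathrm{d}z\;\xrightarrow[\mu\to\infty]{}\;0.$$
Invoking the classical bound $|\Omega_d(x)|\le\Omega_d(0)=(2^{d/2-1}\Gamma(d/2))^{-1}$, which holds because $\Omega_d$ is a normalization of the characteristic function of the uniform distribution on $S^{d-1}\subset\R^d$, one concludes
$$\sup_{r>0}\left|\varphi_{\nu,\mu,\beta}(r)-{\cal M}_{\nu+1/2,\beta}(r)\right|\;\le\;K_d\,\Omega_d(0)\int_0^\infty z^{d-1}\left|\widehat{\varphi}_{\nu,\mu,\beta}(z)-\widehat{{\cal M}}_{\nu+1/2,\beta}(z)\right|\,\mathrm{d}z,$$
whose right-hand side tends to $0$, yielding the claimed uniform convergence on $(0,\infty)$.

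The main conceptual obstacle in this route is already absorbed into Lemma 1: pointwise convergence of nonnegative integrable functions alone is not enough for $L^1$ convergence, since mass can escape to infinity; it is precisely the matching-mass identity that precludes this pathology and unlocks Scheff\'e. With Theorem 2 and Lemma 1 in hand, the remainder is a routine combination of Fourier inversion and the elementary Bessel-function bound on $\Omega_d$.
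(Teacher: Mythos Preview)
Your proof is correct and follows the same overall architecture as the paper's: represent both correlations via Hankel inversion of their spectral densities, bound the difference uniformly in $r$ by the $z^{d-1}$-weighted $L^1$ distance of the spectral densities using the Bessel estimate $|\Omega_d|\le\Omega_d(0)=\big(2^{d/2-1}\Gamma(d/2)\big)^{-1}$, and then show that this $L^1$ distance tends to zero using Theorem~\ref{the3} and Lemma~\ref{lem22}.

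The one substantive difference lies in how that last step is executed. The paper splits $\int_0^\infty$ at a large cutoff $B$: on $[0,B]$ it invokes the uniform spectral convergence of Theorem~\ref{the3}, and on $[B,\infty)$ it uses integrability of the Mat\'ern spectral density together with the equal-mass identity of Lemma~\ref{lem22} to control the tails of \emph{both} densities simultaneously. You instead recognize that pointwise convergence of nonnegative integrable functions whose integrals are all equal is precisely the hypothesis of Scheff\'e's lemma, which yields $L^1$ convergence directly. The paper's $\epsilon$-splitting is, in effect, an inline proof of Scheff\'e specialized to this setting; your version names the lemma and is correspondingly shorter. Both arguments use exactly the same inputs (Theorem~\ref{the3} for pointwise convergence, Lemma~\ref{lem22} for the mass identity, nonnegativity from Bochner), so neither buys more generality than the other---yours is simply a cleaner packaging.
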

\begin{proof}
We need to  verify that, for all $\epsilon>0$, there exists $N\in\N$. such that
\begin{footnotesize}
$$|\varphi_{\nu,\mu,\beta}(r)-{\cal M}_{\nu+1/2,\beta}(r)|\leq \epsilon,\;\;\mu>N$$
\end{footnotesize}
Let ${\cal D}=|\varphi_{\nu,\mu,\beta}(r)-{\cal M}_{\nu+1/2,\beta}(r)|$. Using Equation (\ref{FT}) and invoking the H$\ddot{o}$lder inequality, we have
\begin{footnotesize}
\begin{eqnarray} \label{idt2}
{\cal D}&=&
\bigg|r^{1-d/2}\int_{0}^{\infty}z^{d/2} \widehat{\varphi}_{\nu,\mu,\beta}(z)J_{d/2-1}(rz){\rm d}z
-r^{1-d/2}\int_{0}^{\infty}z^{d/2}\widehat{{\cal M}}_{\nu+0.5,\beta}(z)J_{d/2-1}(rz) {\rm d} z\bigg|\nonumber\\
&=&r^{1-d/2}\left|\int_{0}^{\infty}\left(\widehat{\varphi}_{\nu,\mu,\beta}(z)-\widehat{{\cal M}}_{\nu+0.5,\beta}(z)\right)z^{d/2}J_{d/2-1}(rz){\rm d}z\right|\nonumber\\
&\leq&r^{1-d/2}\int_{0}^{\infty}\left|\left(\widehat{\varphi}_{\nu,\mu,\beta}(z)-\widehat{{\cal M}}_{\nu+0.5,\beta}(z)\right)z^{d/2}J_{d/2-1}(rz)\right| {\rm d}z. \nonumber
\end{eqnarray}
\end{footnotesize}
In particular, by the  inequality $|J_{d/2-1}(rz)|\leq{|rz|^{d/2-1}}/({2^{d/2-1}\Gamma\left({d}/{2}\right)})$  \citep{Ch:2014},  and by direct inspection, we obtain
\begin{footnotesize}
\begin{eqnarray} \label{idt3}
{\cal D}&\leq&\frac{1}{2^{d/2-1}\Gamma\left(\frac{d}{2}\right)}
\int_{0}^{\infty}\left|\widehat{\varphi}_{\nu,\mu,\beta}(z)-\widehat{{\cal M}}_{\nu+0.5,\beta}(z)\right|z^{d-1}{\rm d} z\nonumber\\
&\leq&\frac{1}{2^{d/2-1}\Gamma\left(\frac{d}{2}\right)}\bigg\{\int_{0}^{B}\left|\widehat{\varphi}_{\nu,\mu,\beta}(z)-\widehat{{\cal M}}_{\nu+0.5,\beta}(z)\right|z^{d-1} {\rm d} z
+ \int_{B}^{\infty}z^{d-1}\widehat{\varphi}_{\nu,\mu,\beta}(z) {\rm d} z\nonumber\\
&+&\int_{B}^{\infty}z^{d-1}\widehat{{\cal M}}_{\nu+0.5,\beta}(z) {\rm d} z\bigg\}\nonumber\\
&=&\frac{1}{2^{d/2-1}\Gamma\left(\frac{d}{2}\right)}\bigg\{\int_{0}^{B}\left|\widehat{\varphi}_{\nu,\mu,\beta}(z)-\widehat{{\cal M}}_{\nu+0.5,\beta}(z)\right|z^{d-1} {\rm d} z
+\int_{0}^{B}\left[\widehat{{\cal M}}_{\nu+0.5,\beta}(z)-\widehat{\varphi}_{\nu,\mu,\beta}(z)\right]z^{d-1} {\rm d} z\nonumber\\
&+&2\int_{B}^{\infty}z^{d-1}\widehat{{\cal M}}_{\nu+0.5,\beta}(z) {\rm d} z
+\int_{0}^{\infty}z^{d-1}\widehat{\varphi}_{\nu,\mu,\beta}(z) {\rm d} z-\int_{0}^{\infty}z^{d-1}\widehat{{\cal M}}_{\nu+0.5,\beta}(z)  {\rm d} z\bigg\}\nonumber\\
&\leq&\frac{1}{2^{d/2-1}\Gamma\left(\frac{d}{2}\right)}\bigg\{2\int_{0}^{B}\left|\widehat{\varphi}_{\nu,\mu,\beta}(z)-\widehat{{\cal M}}_{\nu+0.5,\beta}(z)\right|z^{d-1} {\rm d} z+2\int_{B}^{\infty}z^{d-1}\widehat{{\cal M}}_{\nu+0.5,\beta}(z) {\rm d} z\bigg\},
\end{eqnarray}
\end{footnotesize}
\noindent
where the last inequality is a direct consequence of Lemma \ref{lem22}. Set $K(d)=( {2^{d/2-1}\Gamma(d/2)})^{-1}$.
From the integrability of $z^{d-1}\widehat{{\cal M}}_{\nu+0.5,\beta}(z)$ over $\R^{+}$,
given an arbitrary $\epsilon> 0$ we  can choose $B$ to be sufficiently large to ensure that
$$\int_{B}^{\infty}z^{d-1}\widehat{{\cal M}}_{\nu+0.5,\beta}(z) {\rm d} z\leq \epsilon /(4K(d)).$$

For the first term,  we note  from Theorem \ref{the3}, that there exists $N \in \N$, such that
$$\int_{0}^{B}\left|\widehat{\varphi}_{\nu,\mu,\beta}(z)-\widehat{{\cal M}}_{\nu+0.5,\beta}(z)\right|z^{d-1} {\rm d} z \leq \epsilon/(4K(d)), \qquad \forall \mu>N. $$
Then,  ${\cal D}\leq K(d)[\epsilon/(2K(d))+ \epsilon/(2K(d))]=\epsilon$,  $\forall \mu>N$
which completes the proof.
\end{proof}


Some comments are in order.
First, note that for a  given smoothness parameter $\nu$ and scale parameter $\beta$,
the $\mu$ parameter allows us to increase or decrease the compact support $\delta_{\nu,\mu,\beta}$ of the proposed model
$ \varphi_{\nu,\mu,\beta}={\cal GW}_{\nu,\mu,\delta_{\nu,\mu,\beta}}$
since $\delta_{\nu,\cdot,\beta}$ is strictly increasing on $[\lambda(d,\nu),\infty)$.
In addition, Theorem \ref{the4} states that when $\mu \to \infty$ the Mat{\'e}rn model with global compact support is achieved.
Hence, the parameter $\mu$ is crucial to fix the sparseness of the associated correlation matrix
and it allows to switch from the world of flexible compactly supported covariance functions to the world of flexible globally supported covariance functions.
In principle, $\mu$ can be estimated from the data (see Section \ref{sec5} and the real data Application  in Section \ref{sec6})
or can be fixed by the user when seeking highly sparse matrices for computational reasons.

As an illustrative example,
Figure    \ref{cova} (b) gives a graphical representation of  ${\cal GW}_{\nu,\mu,\delta_{\nu,\mu,\beta}}$ when $\nu=2$
and $\mu=5, 10, 15$ and when $\mu \to \infty$, that is the  Mat{\'e}rn model
${\cal M}_{\nu+1/2,\beta}$.
The parameter $\beta$ is chosen so that the practical range of the Mat{\'e}rn model is equal to $0.2$ (with practical range, we mean
 the value  $x$ such that ${\cal M}_{\nu+1/2,\beta}(r)$ is lower than $0.05$ when $r > x$).
Apparently, when increasing $\mu$, the ${\cal GW}_{\nu,\mu,\delta_{\nu,\mu,\beta}}$  model approaches the ${\cal M}_{\nu+1/2,\beta}$ model.
Figure   \ref{cova} (b) also reports the associated increasing compact supports  $\delta_{\nu,\mu,\beta}$ ($0.231$, $0.403$ and $0.911$).
Figure    \ref{cova} (a) gives a graphical representation  of the generalized Wendland model using the original parameterization i.e.,
${\cal GW}_{\nu,\mu,\beta}$ when $\nu=2$, $\beta=0.5$  when increasing $\mu$. Using the original parameterization the behavior of the correlation  changes drastically
when increasing $\mu$. In particular   as $\mu \to \infty$,  it can be shown that ${\cal GW}_{\nu,\mu,\beta}(r)=0$ if $r>0$  and   ${\cal GW}_{\nu,\mu,\beta}(r)=1$ if $r=0$.

\begin{figure}[h!]
\begin{center}
\begin{tabular}{cc}
\includegraphics[width=5.2cm, height=5.2cm]{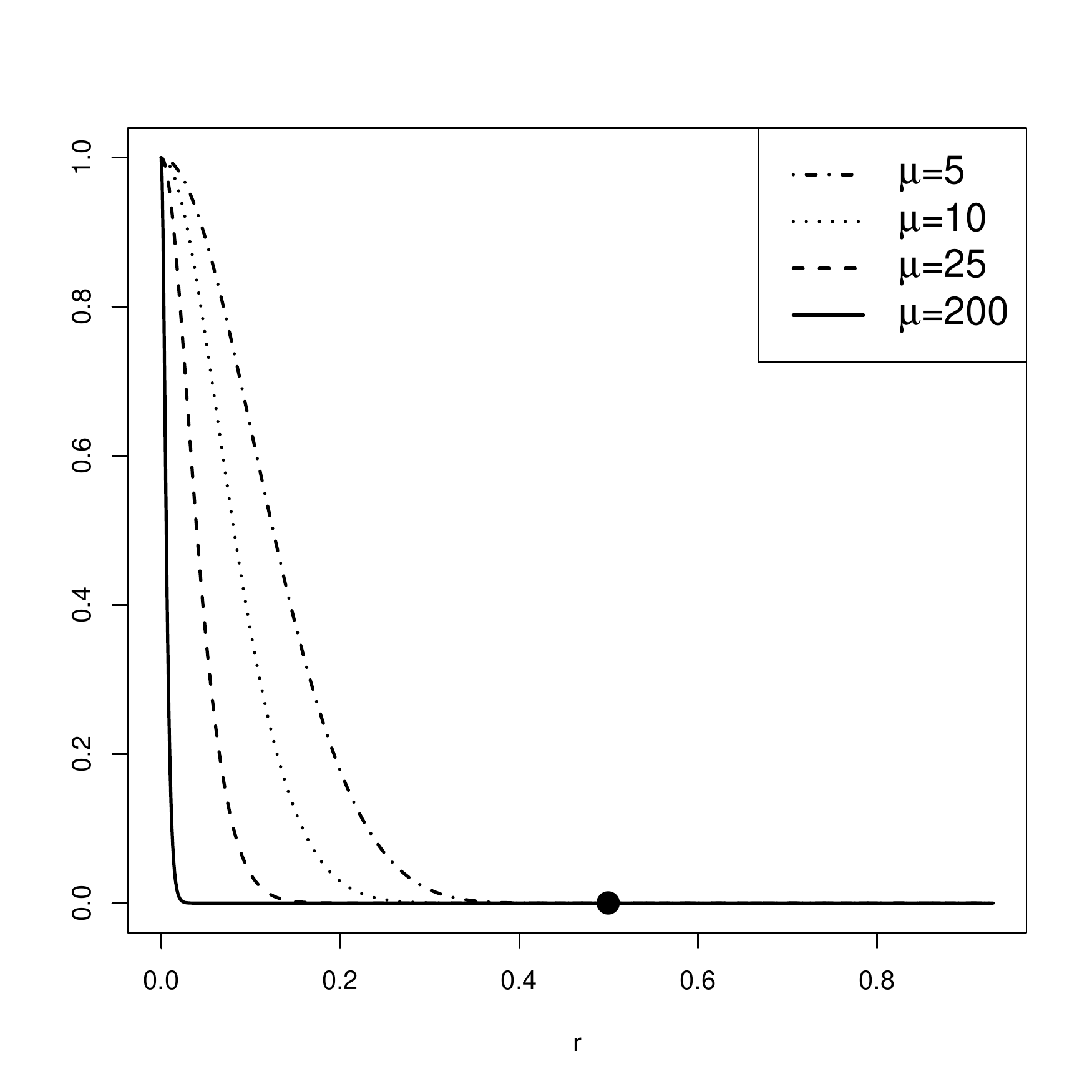}
&
\includegraphics[width=5.2cm, height=5.2cm]{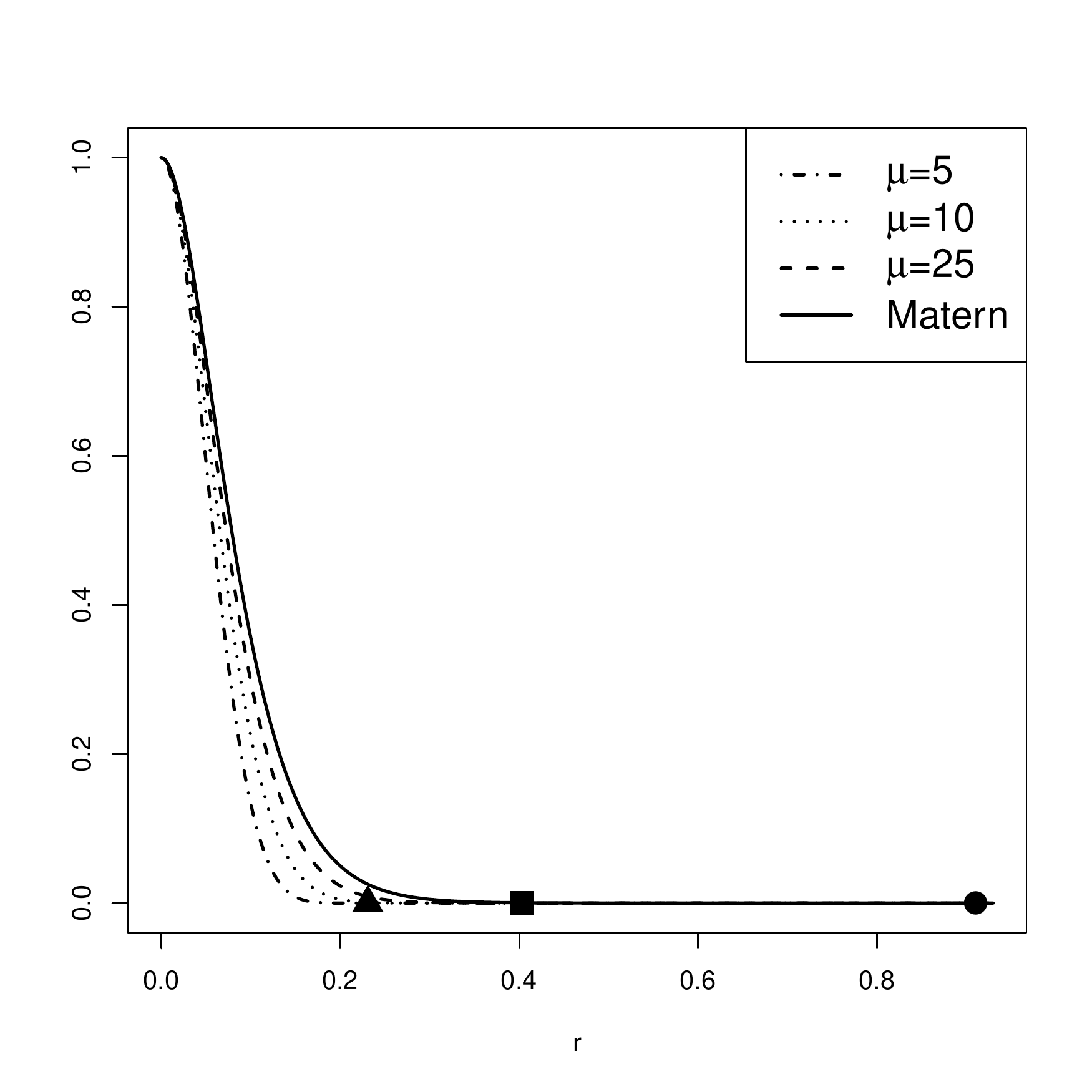}
\\
\hspace{-0.5cm}
(a)  & (b)\\
\end{tabular}
\end{center}
\caption{a): The Generalized Wendland  model ${\cal GW}_{\nu,\mu,0.5}$  when $\nu=2$, $\mu=5, 10, 25, 200$. b):
the proposed reparametrized Generalized Wendland  model  $\varphi_{\nu,\mu,\beta}= {\cal GW}_{\nu,\mu,\delta_{\nu,\mu,\beta}}$
when $\nu=2$, $\beta= 0.0338$ and $\mu=5, 10, 25$ and the limit case when  $\mu \to \infty$ that is the Mat{\'e}rn model ${\cal M}_{\nu+1/2,\beta}$.
In b) the points    ($\blacktriangle$, $\blacksquare$, $\bullet$)   (from left to right) denote the increasing compact support    $\delta_{\nu,\mu,\beta}=0.231, 0.403, 0.911$  associated with $\mu=5, 10, 25$ respectively.}
 \label{cova}
\end{figure}

\begin{figure}[h!]
\begin{center}
\begin{tabular}{cc}
\includegraphics[width=5.2cm, height=5.2cm]{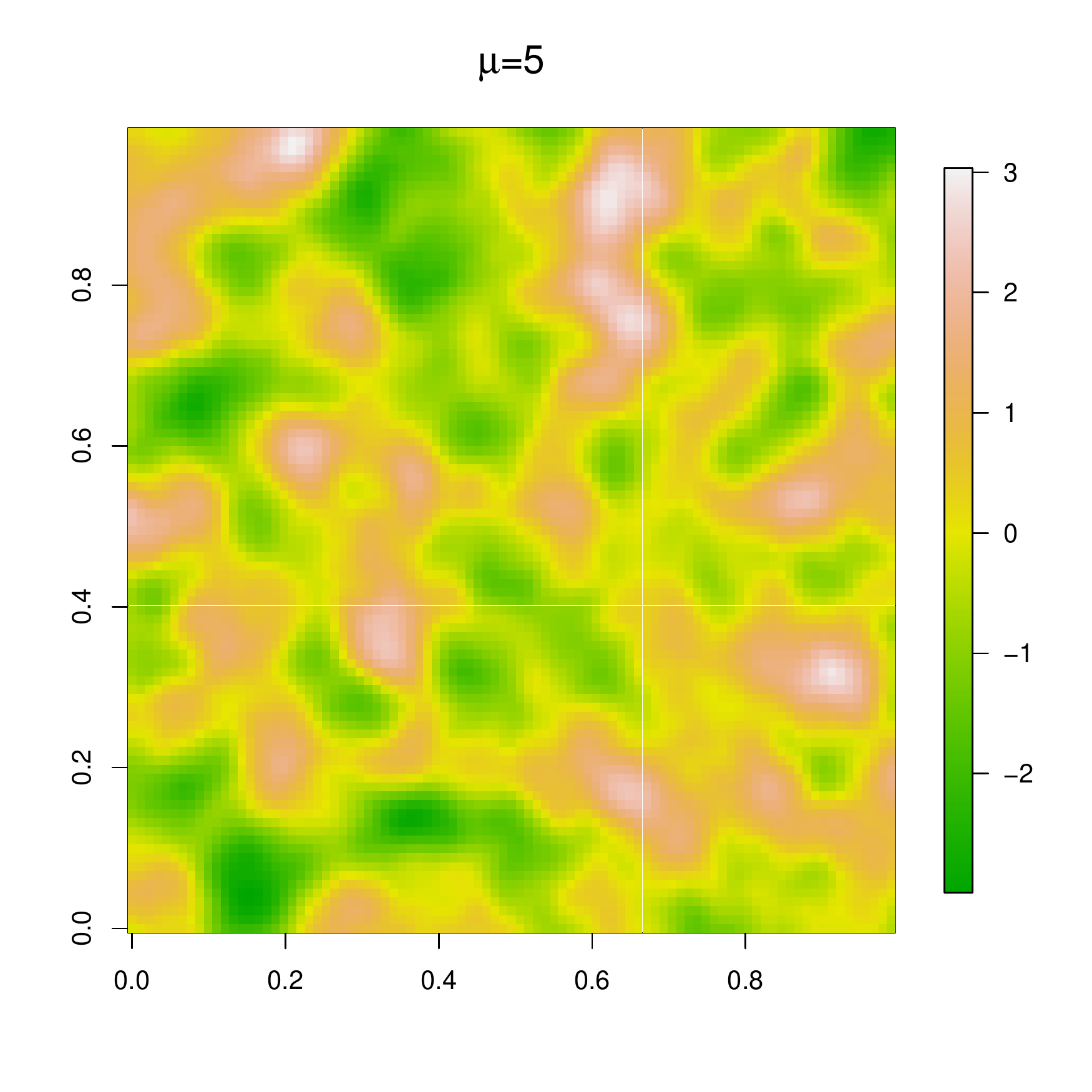}
&
\includegraphics[width=5.2cm, height=5.2cm]{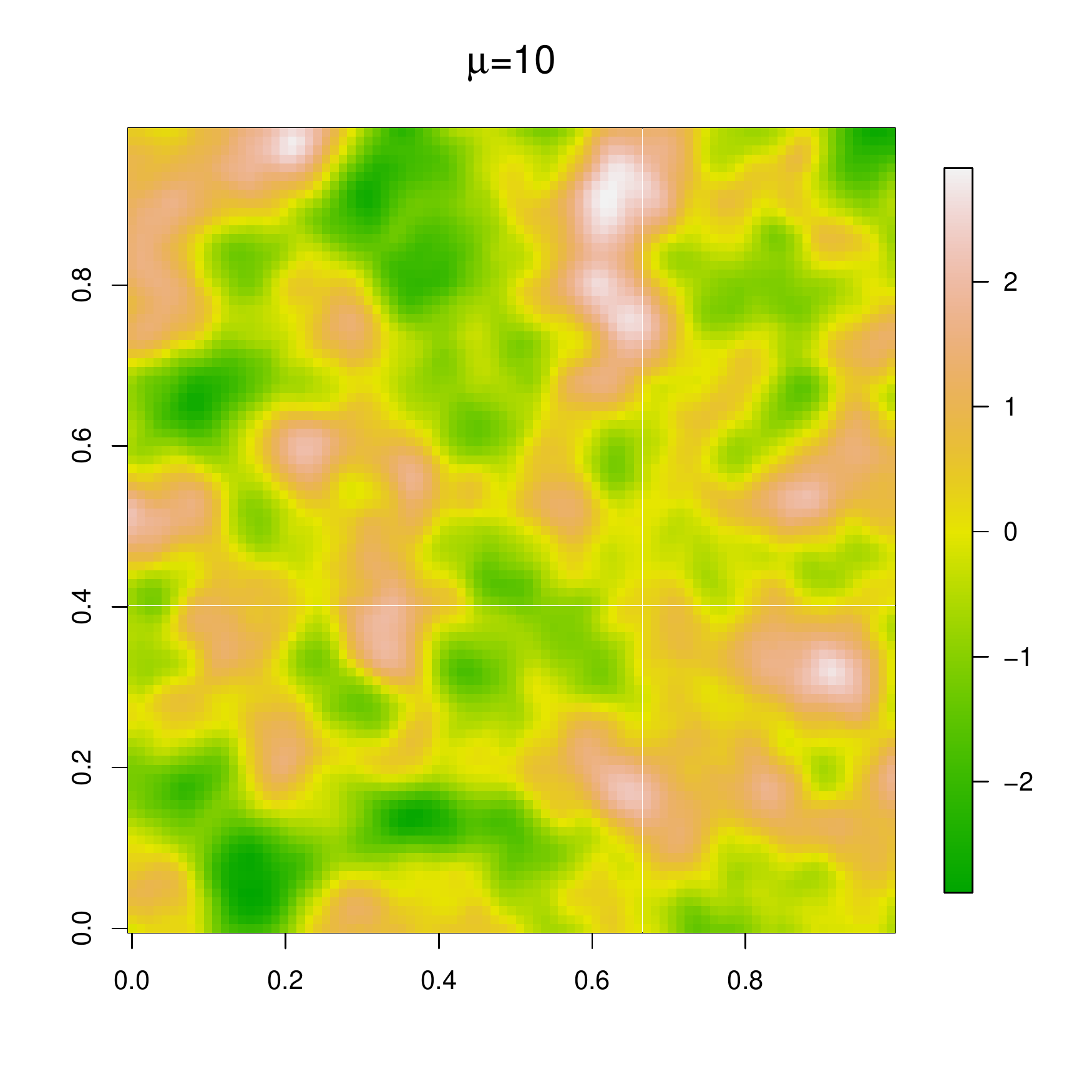}
\\
\includegraphics[width=5.2cm, height=5.2cm]{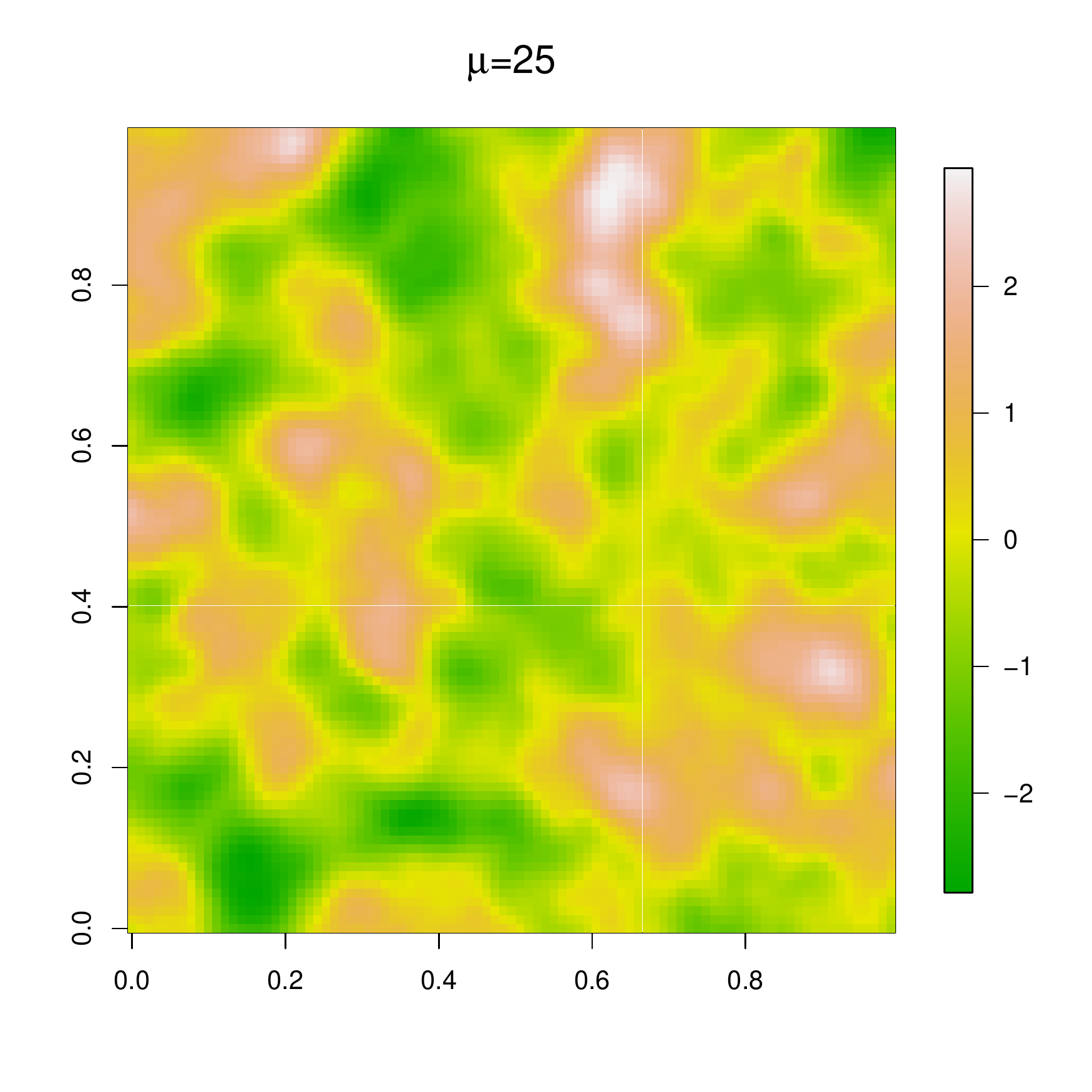}  &
\includegraphics[width=5.2cm, height=5.2cm]{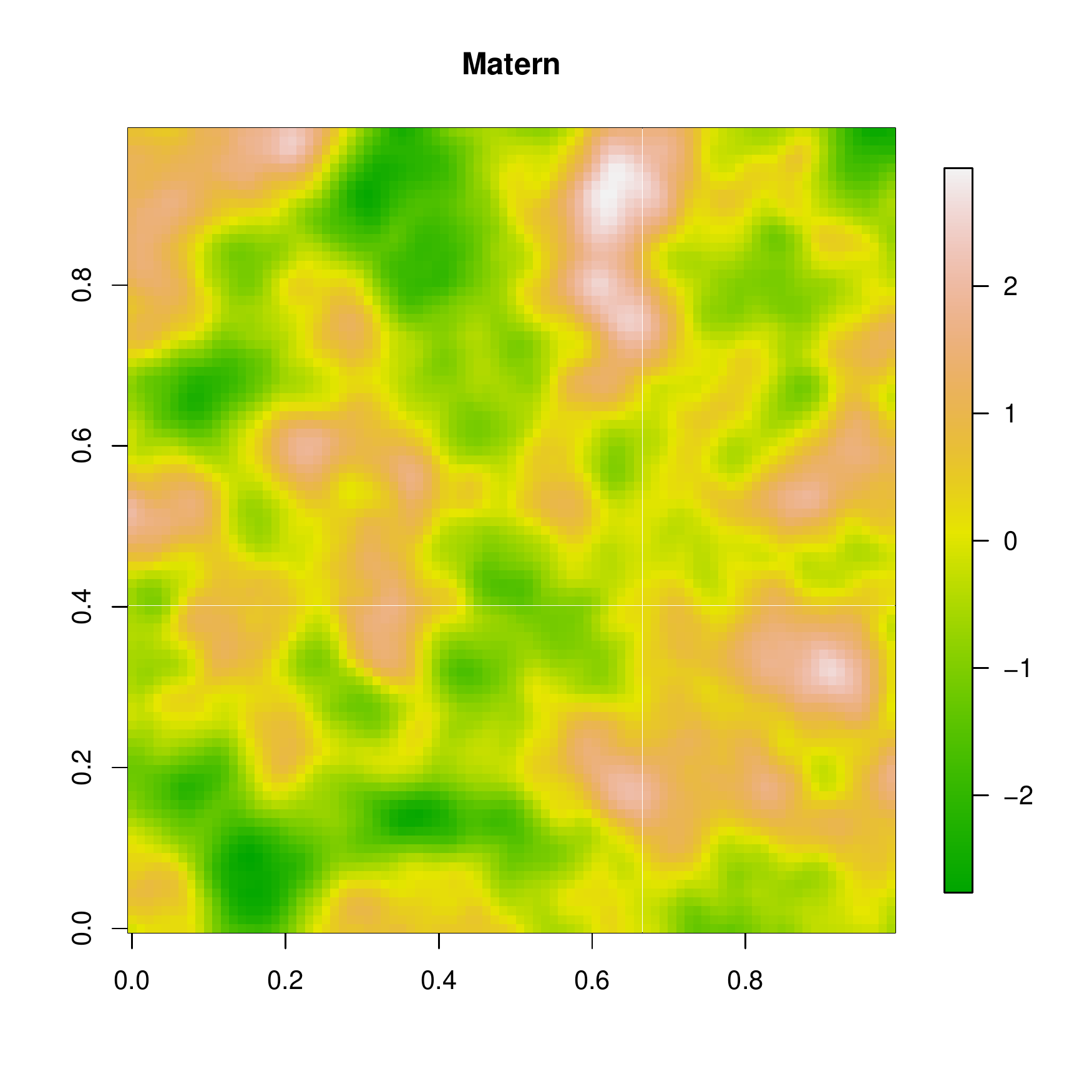}
\\
\end{tabular}
\end{center}
\caption{Four realizations of a Gaussian RF with  
$\varphi_{\nu,\mu,\beta}$ correlation  model
when $\nu=2$, $\beta= 0.0338$ and $\mu=5, 10, 25$ and the limit case when $\mu \to \infty$, that is the
Mat{\'e}rn model
 ${\cal M}_{\nu+1/2,\beta}$ (on the bottom right corner).}
\label{fig:map-ned}
\end{figure}

Figure \ref{fig:map-ned} shows four realizations of a zero-mean  Gaussian RF with
${\cal GW}_{\nu,\mu,\delta_{\nu,\mu,\beta}}$ correlation model using the same parameter settings of Figure \ref{cova}.
For the four realizations we use a common Gaussian simulation using Cholesky decomposition.
It can be appreciated that the realizations are very smooth (the sample paths are   $2$ times differentiable in this case), and they look very similar, even if the first three realizations come from Gaussian RFs with compactly supported correlation functions.

Finally, we point out that the Mat{\'e}rn model is attained as limit when the smoothness parameter is greater than or  equal than $0.5$. This implies
that  the full range of validity of the  smoothness parameter is not covered.  In particular, the proposed model is not able to parameterize
the fractal dimension \citep{gneiting2012a} of the associated Gaussian RF as in the Mat{\'e}rn case.


\section{Numerical experiments} \label{sec5}
\subsection{{\bf Speed of convergence}}
In the absence of theoretical rates of convergence, we show some simple numerical results on the convergence of the $\varphi_{\nu,\mu,\beta}$  to the
Mat{\'e}rn model when increasing $\mu$.  Specifically, we analyze  the absolute  error
\begin{equation}\label{abserr}
E_{\mu,\nu}(r):= |\varphi_{\nu,\mu,\beta}(r) -{\cal M}_{\nu+1/2,\beta}(r)|,   \quad r\geq0,
\end{equation}
 when increasing $\mu$  given   $\nu$ and $\beta$.

 In particular in Figure  \ref{cont22} (first row) we plot  $\varphi_{\nu,\mu,\beta}$ , 
 for $\mu=\lambda(2,\nu), 5, 10, 20, 40, 60, 80$
 and ${\cal M}_{\nu+1/2,\beta}$
for $\nu=0, 1, 2$. Here the $\beta$ parameter is chosen such that the practical range of the Mat{\'e}rn model model is approximately equal to $0.5$ ($\beta=0.167, 0.105, 0.084$, respectively,
for $\nu=0, 1, 2$).
 The second row displays the associated values of $E_{\mu,\nu}$. It can be appreciated that $E_{\mu,\nu}$ decreases when increasing $\mu$
 for each $\nu$, as expected from Theorem \ref{the4} and  the magnitude of the absolute error is increasing with $\nu$.
 In addition, the third row depicts the spectral densities associated to the correlation models in the first row.
 Note that the approximation is getting better for the high-frequency components as $\mu$  increases and it deteriorates when increasing $\nu$.
 These simple numerical  examples shows that the speed of convergence
   depends on the smoothness parameter $\nu$.
Table  \ref{tabxxx} more  deeply depicts  the convergence of the proposed model to    Mat{\'e}rn
by reporting the maximum   absolute error  under a more general parameter setting.  Table  \ref{tabxxx} confirms that
$\varphi_{\nu,\mu,\beta}$ approaches ${\cal M}_{\nu+1/2,\beta}$
when increasing  $\mu$
and the maximum absolute error between them strongly depends on $\nu$.

\begin{figure}[h!]
\begin{tabular}{ccc}
\includegraphics[width=5.25cm, height=5.3cm]{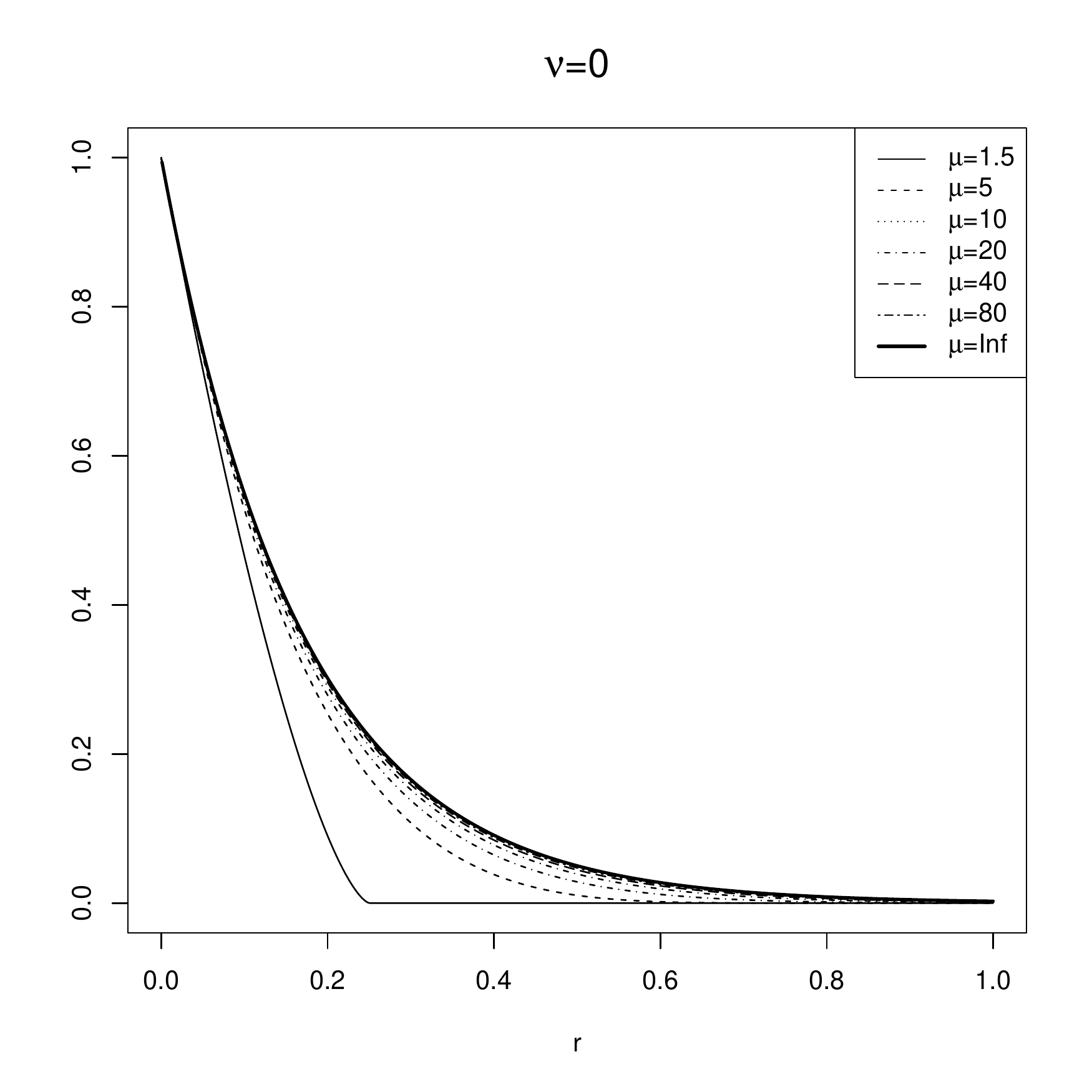} & \includegraphics[width=5.25cm, height=5.3cm]{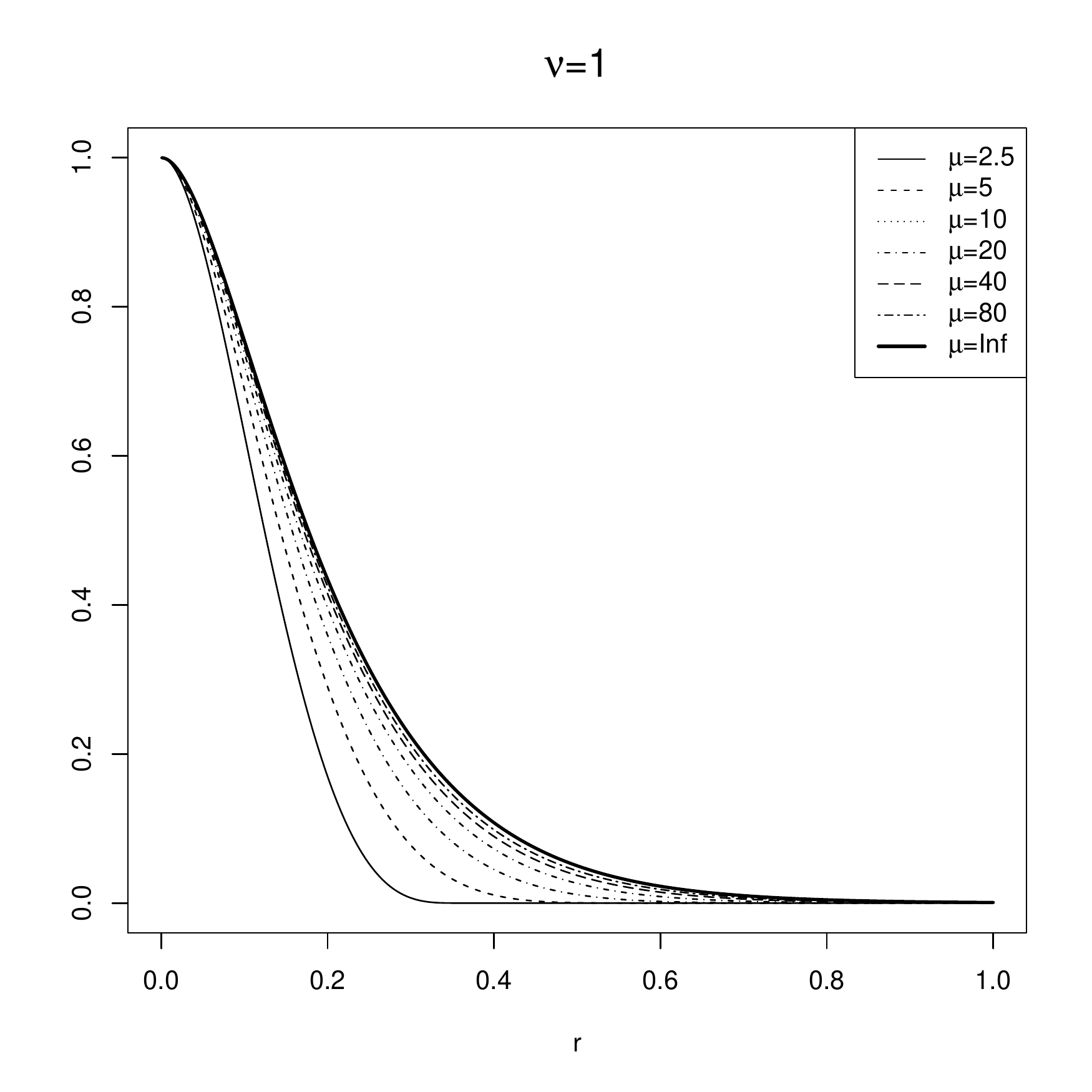} & \includegraphics[width=5.25cm, height=5.3cm]{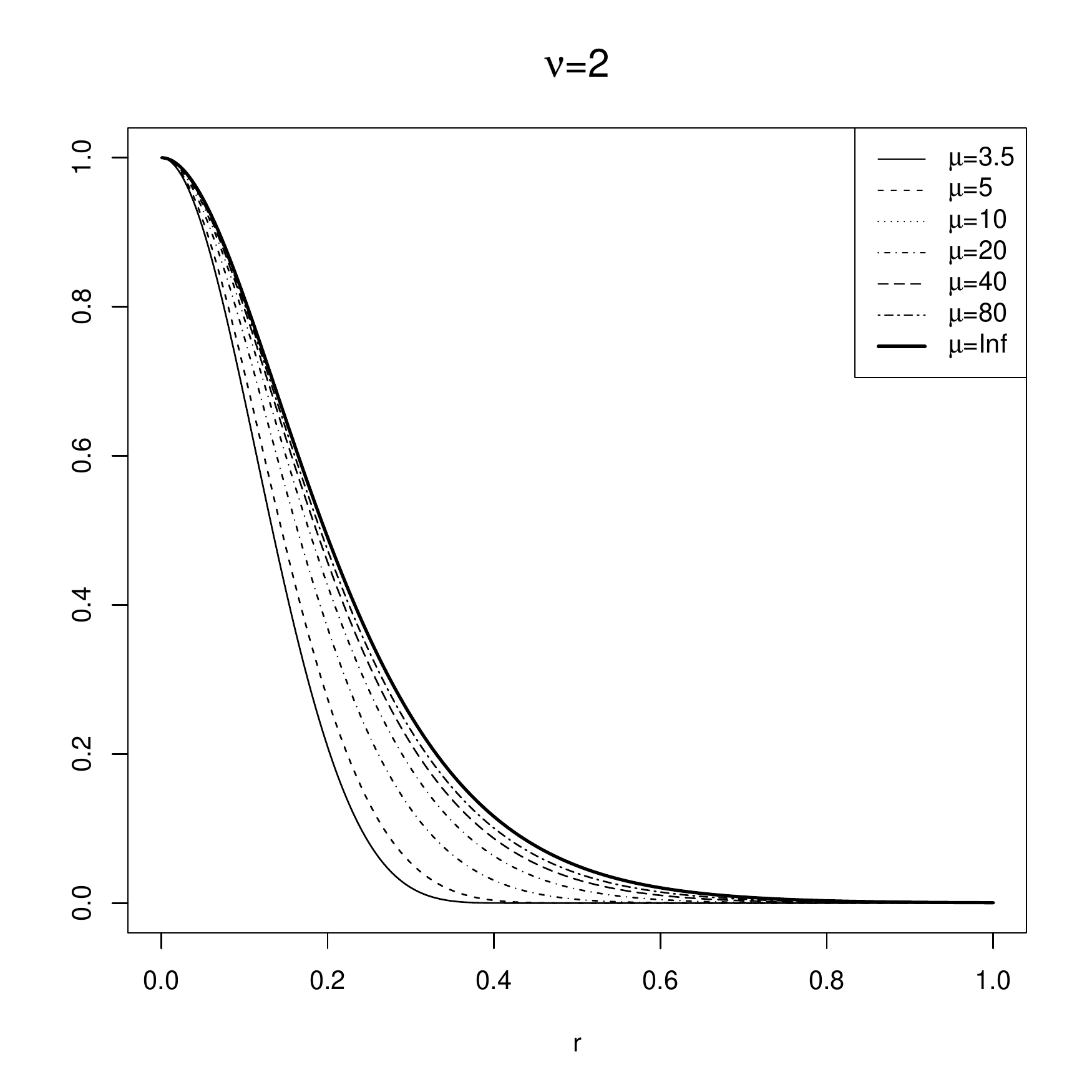}\\
\includegraphics[width=5.25cm, height=5.3cm]{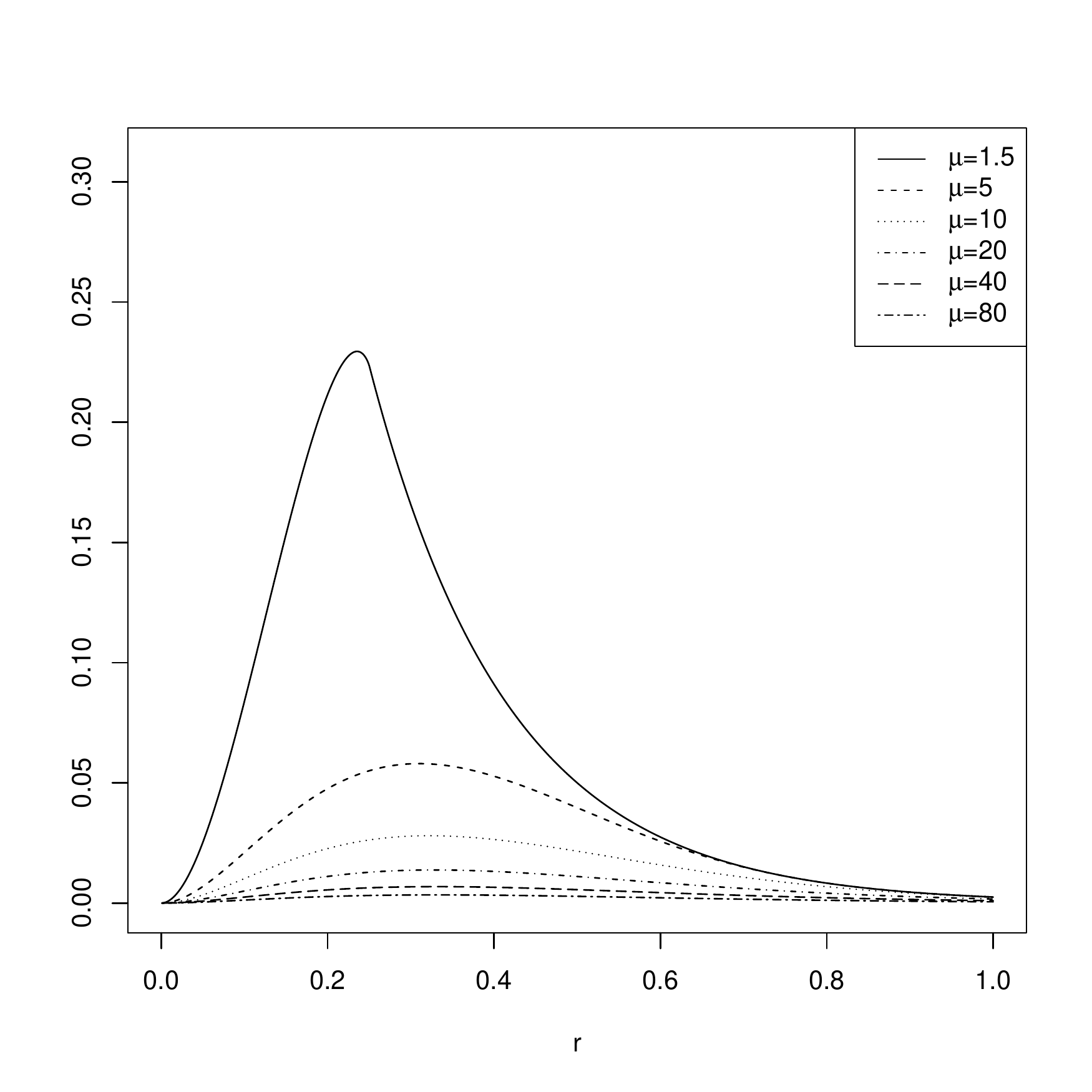} & \includegraphics[width=5.25cm, height=5.3cm]{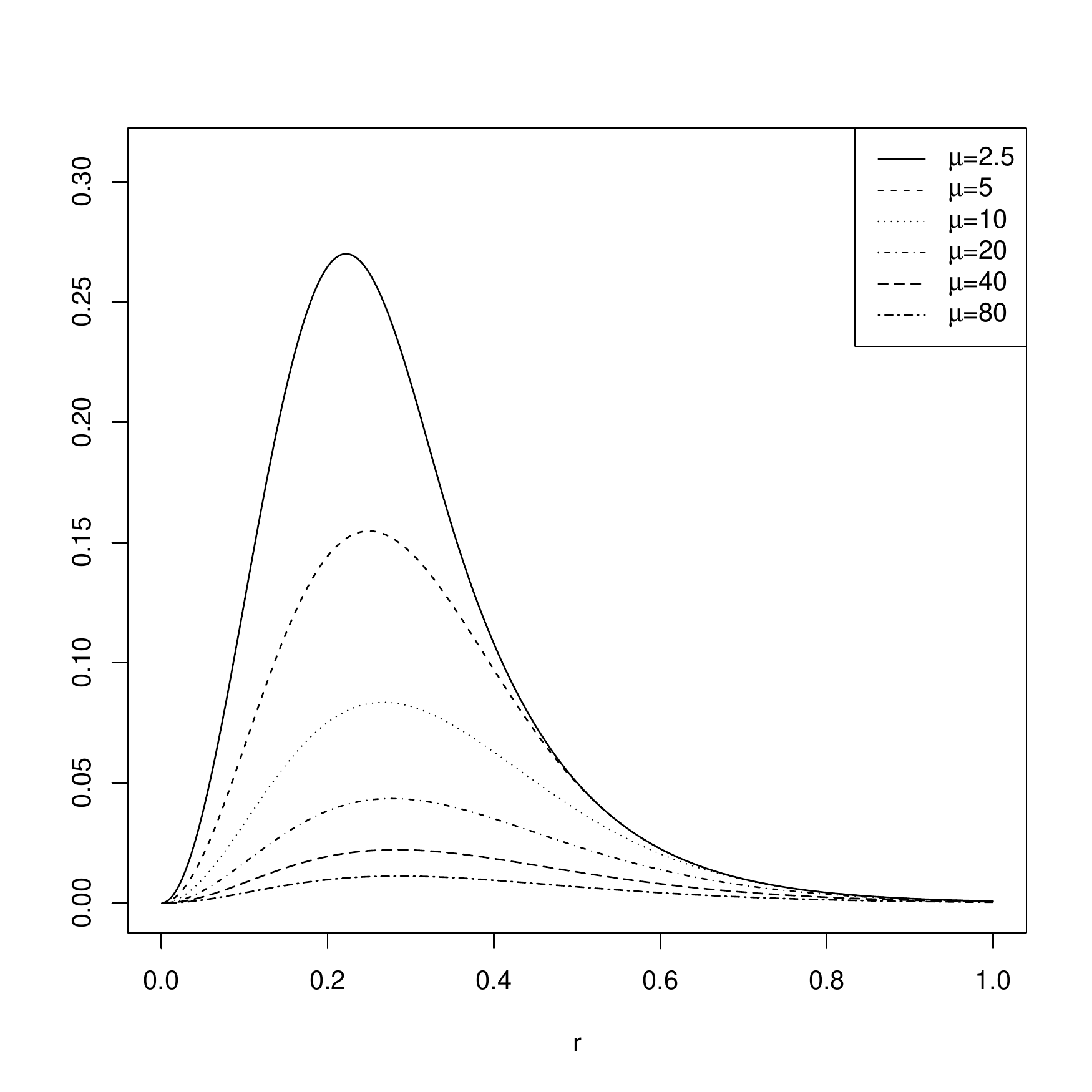}& \includegraphics[width=5.25cm, height=5.3cm]{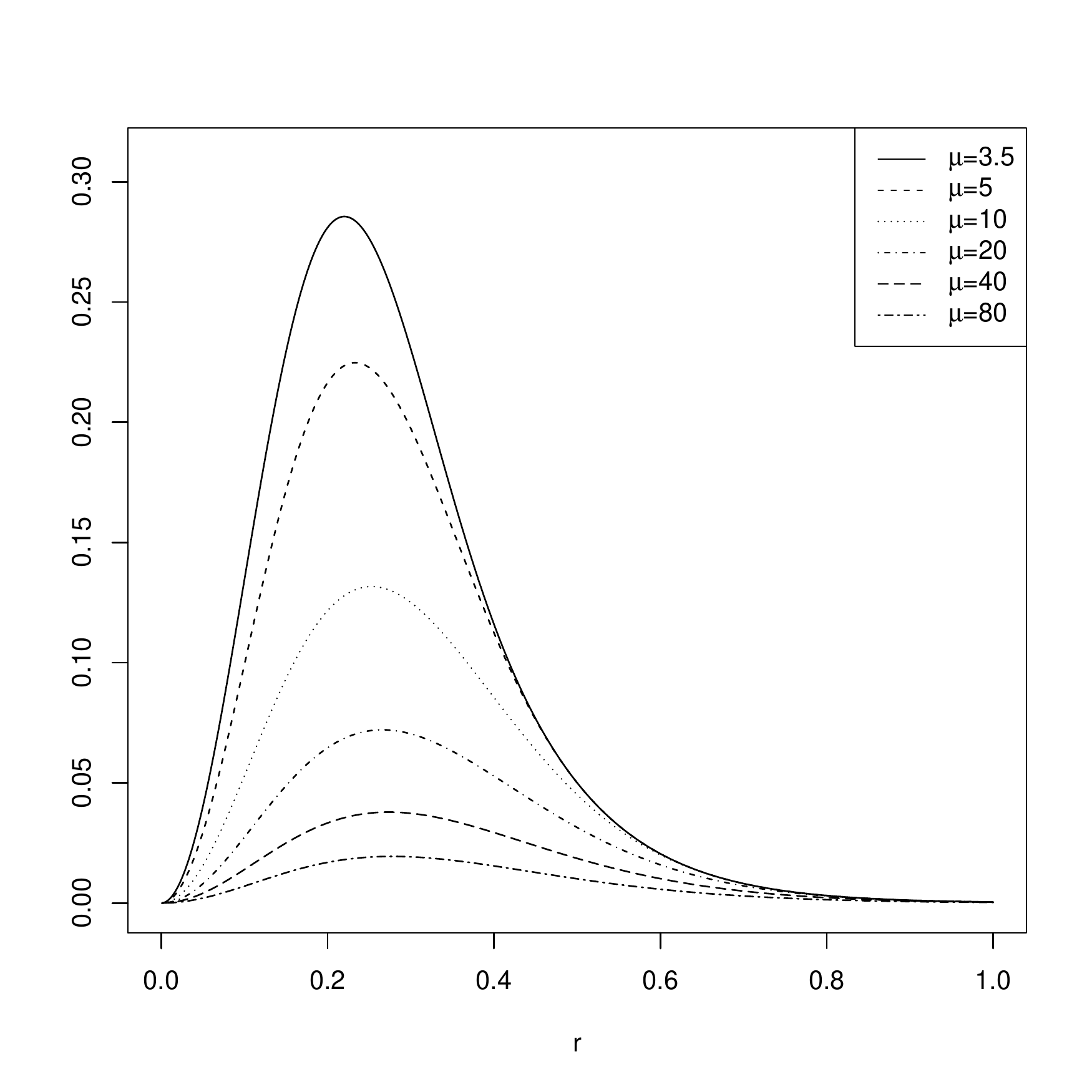} \\
\includegraphics[width=5.25cm, height=5.3cm]{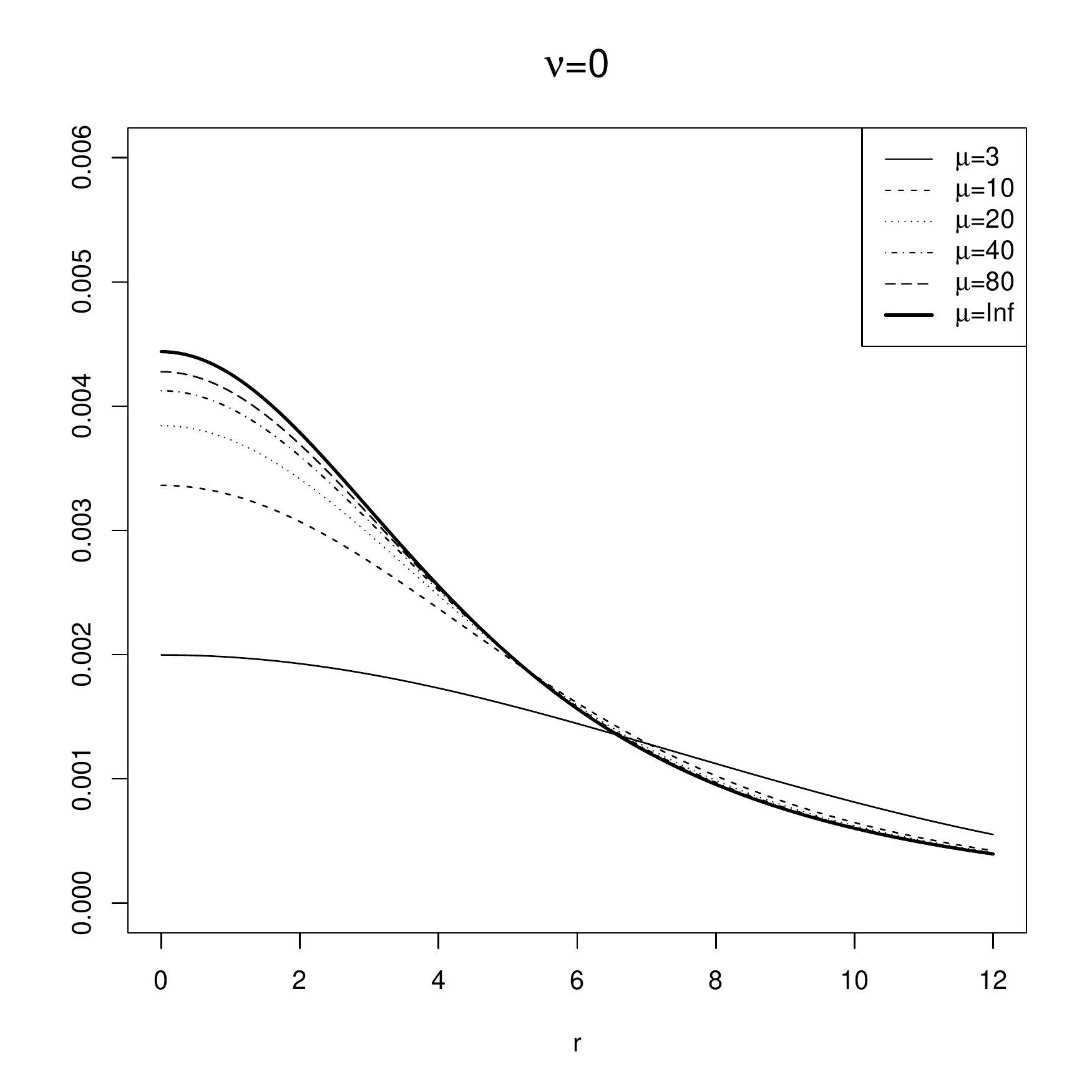} & \includegraphics[width=5.25cm, height=5.3cm]{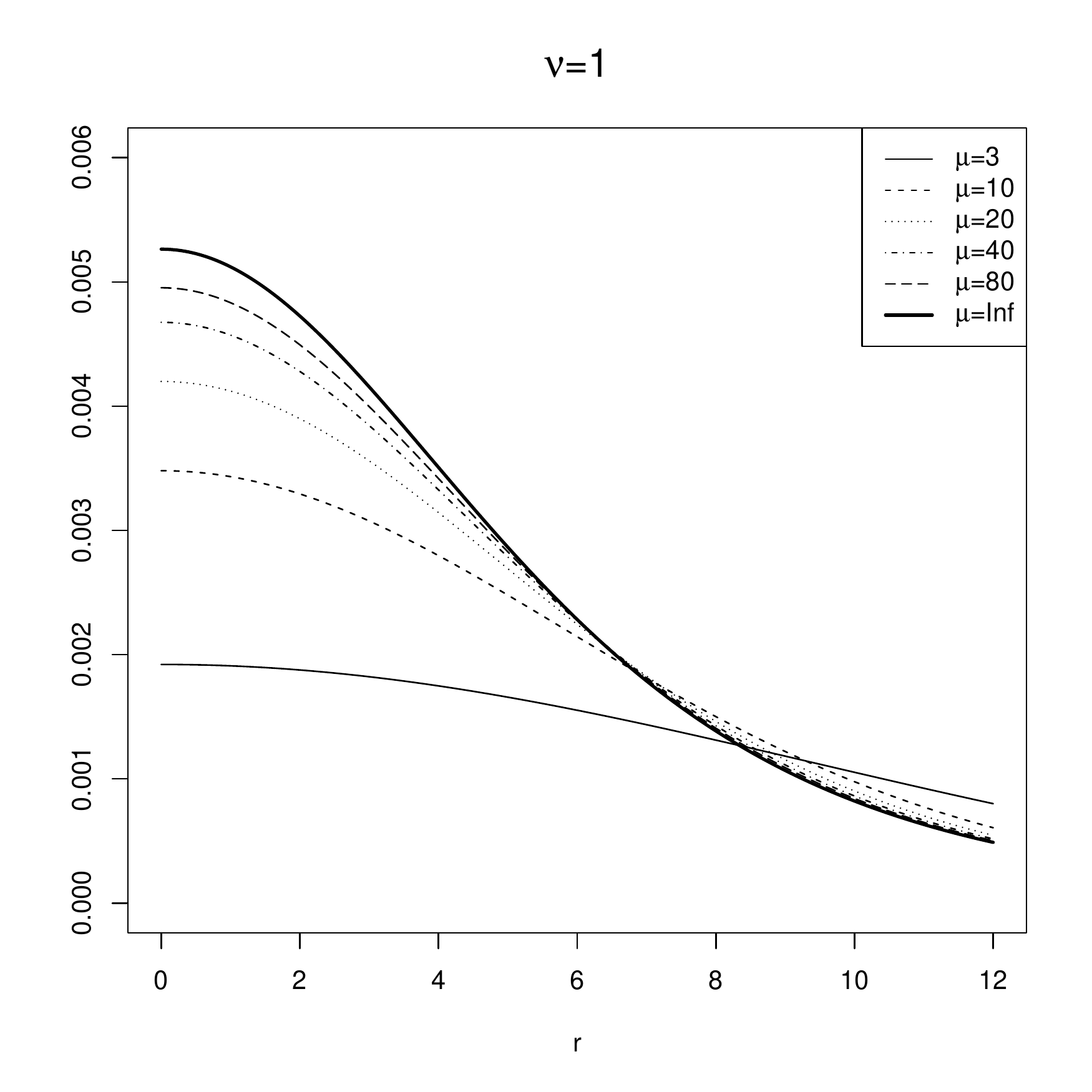}& \includegraphics[width=5.25cm, height=5.3cm]{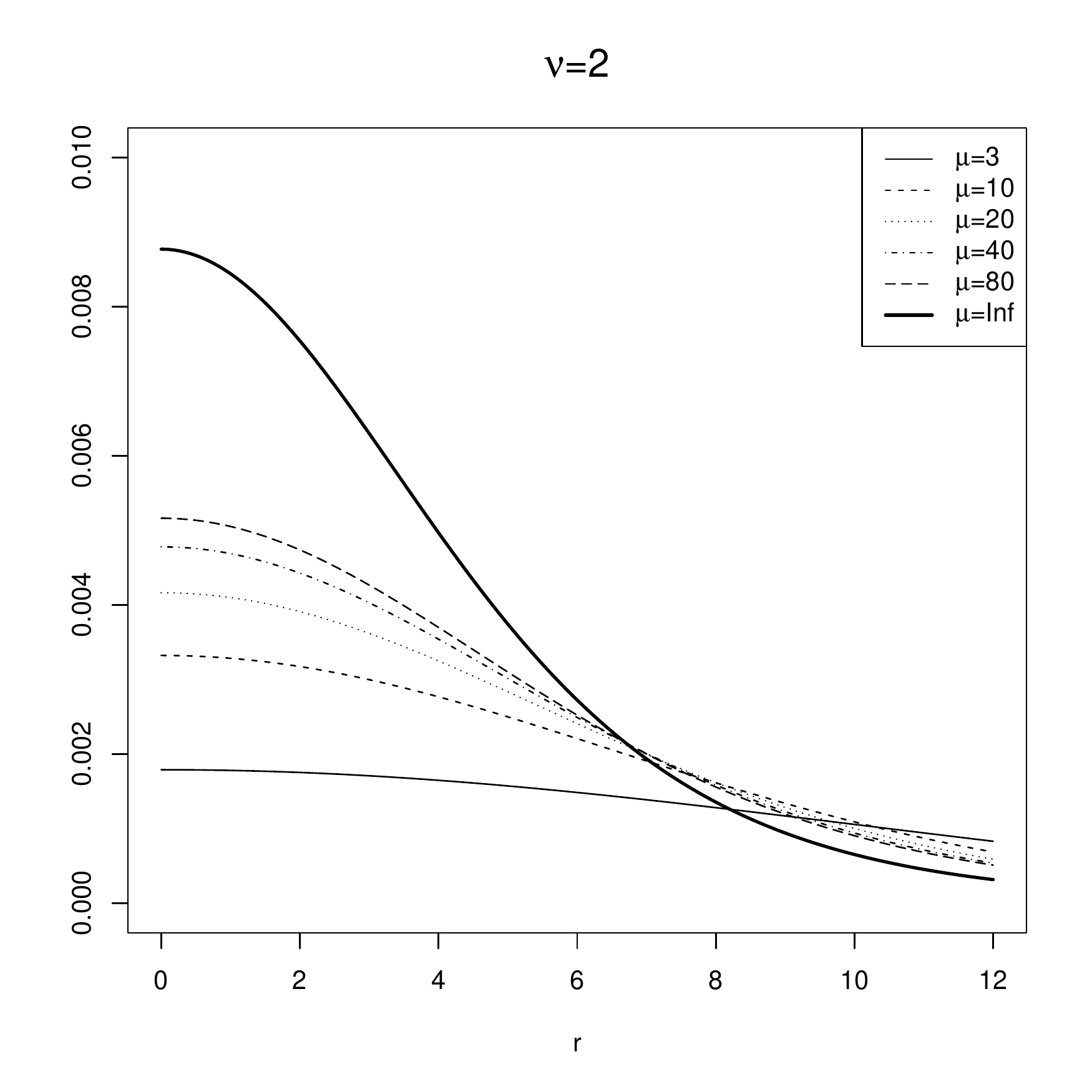} \\
\end{tabular}
\caption{First row: the   $\varphi_{\nu,\mu,\beta}(r)$ model  with $\mu=\lambda(2,\nu), 5, 10,20,40,80$ and $\mu \to \infty$ (the Mat{\'e}rn model)
and  with $\beta= 0.167, 0.105, 0.084$ and  $\nu=0,1,2$ (from left to right) respectively.
Second row: associated absolute value error $E_{\mu,\nu}(r)$ as defined in (\ref{abserr}). Third row:   spectral densities associated with the correlation models in the first row.}  \label{cont22}
\end{figure}

\begin{table}[h!]
\caption{Maximum of    $E_{\mu,\nu}$
as defined in (\ref{abserr})  when increasing $\mu$ for $\nu=0, 0.5,\ldots, 2.5$.} \label{tabxxx}
\begin{center}
\scalebox{0.7}{
\begin{tabular}{|c||c|c|c|c|c|c|c|c|c|}
  \hline
$\mu$     &     $\lambda(d,\nu)$  & 5&    $10$   &   $20$    &  $40$     & $80$   &  $160$   &   $320$    &  $640$\\  \hline\hline
$\nu=0.0$   &   $0.22944$ &0.05799 & $0.02800$ & $0.01376$ & $0.00682$ & $0.00340$ & $0.00170$ & $0.00085$ & $0.00042$\\
$\nu=0.5$ &   $0.25586$ & $0.11010 $& $0.05643$ & $0.02857$ & $0.01438$ & $0.00721$ & $0.00361$ & $0.00181$ & $0.00090$\\
$\nu=1.0$   &   $0.27001$ & 0.15470 & $0.08346$ & $0.04345$ & $0.02218$ & $0.01121$ & $0.00564$ & $0.00283$ & $0.00141$\\
$\nu=1.5$ &   $0.27914 $ & $0.19257$ &$0.10856$ & $0.05800$ & $0.03004$ & $0.01529$ & $0.00772$ & $0.00388$ & $0.00194$\\
$\nu=2.0$   &   $ 0.28554 $ &$0.22475$ & $0.13164$ & $0.07205$ & $0.03782$ & $0.01940$ & $0.00983$ & $0.00494$ & $0.00248$\\
$\nu=2.5$ &   $0.29029$  & $0.25230$ & $0.15279$ & $0.08552$ & $0.04549$ & $0.02350$ & $0.01195$ & $0.00603$ & $0.00303$\\
  \hline
\end{tabular}
}
\end{center}
\end{table}

\subsection{{\bf On the asymptotic distribution of the maximum likelihood estimator}}\label{mlll}
This Section focus on the ML estimation of the proposed covariance model. Let $D$ be  a 
subset of $ \R^d$
and  $S_n=\{ \ss_1,\ldots,\ss_n \in D \}$
 denote any set of distinct locations.
Let $\bZ_n=(Z(\boldsymbol{s}_1),\ldots,Z(\boldsymbol{s}_n))^{\top}$
be a finite  realization of   a zero-mean stationary Gaussian RF   $Z=\{ Z(\ss), \ss \in D  \}$, with isotropic covariance function
$\sigma^2\varphi_{\nu,\mu,\beta}$. Here, $\top$ denotes transposition.

We then write 
$R_{n}(\btau)=[\varphi_{\btau}(\|\boldsymbol{s}_i-\boldsymbol{s}_j\| )]_{i,j=1}^n$  with $\btau=(\nu,\mu,\beta)^{\top}$ for the associated correlation matrix.
If $\bm{\theta}=(\sigma^{2},\btau)^{\top}$,  the  Gaussian log-likelihood function is defined as follows:
\begin{equation}\label{eq:17}
\mathcal{L}_{n}(\bm{\theta})=-\frac{1}{2} \left(n\log(2\pi\sigma^{2})+\log(|R_{n}(\btau)|)+\frac{1}{\sigma^{2}}\bZ_n^{\top}R_{n}(\btau)^{-1}\bZ_n \right),
\end{equation} and  $\widehat{\bm{\theta}}_n:=\text{argmax}_{\bm{\theta} \in
\bm{\theta}}\mathcal{L}_{n}(\bm{\theta})$ is the ML estimator of $\bm{\theta}$.
 \cite{Mardia:Marshall:1984}  provide  general conditions for the
consistency and the asymptotic normality of the ML estimator irrespective of the correlation model. Under
suitable conditions, 
$\widehat{\bm{\theta}}_n$  is consistent and
asymptotically normal,  that is  $\widehat{\bm{\theta}}_n-\bm{\theta}  \stackrel{\mathcal{D}}{\longrightarrow}  \mathcal{N}\left(\bm{0},F^{-1}_n(\bm{\theta})\right)$ as $n\to\infty$
where
 \begin{equation}\label{eq:fisher}
\bm F_n(\bm{\theta})=\left[\frac{1}{2}\textrm{tr}\left(  \Sigma_n(\bm{\theta})^{-1} \frac{d  \Sigma_n(\bm{\theta})}{d \bm{\theta}_i}
 \Sigma_n(\bm{\theta})
^{-1}  \frac{d  \Sigma_n(\bm{\theta})}{d \bm{\theta}_j}\right) \right]_{i,j=1}^p.
\end{equation}
is
the Fisher Information matrix and $\Sigma_n(\bm{\theta})=\sigma^2R_{n}(\btau)$.
The conditions are normally  difficult to verify and they assume indirectly that the sample set grows
in such a way that the sampling domain
increases in extent as $n$ increases (i.e., $||\bm{s}_i-\bm{s}_j||\geq c>0$), which implies that  the set $D$ is unbounded.

Under fixed domain asymptotics, no general results are available for the asymptotic properties of ML estimator.
For the Generalized Wendland   model they have been studied in  \cite{Bevilacqua:20189}.
In particular, using Theorem 4 in  \cite{Bevilacqua:20189}, it can be shown that
if $P(\sigma_i^2\varphi_{\nu,\mu_i,\beta_i})$,  $i=0, 1$, are two zero mean Gaussian  measures
 and   if  $\mu_i > \nu+d+0.5$
 then for any
bounded infinite set $D\subset \R^d$, $d=1, 2, 3$,
$P(\sigma^2_0\varphi_{\nu,\mu_0,\beta_0})\equiv P(\sigma^2_1\varphi_{\nu,\mu_1\beta_1}) $ on the paths of $Z$ if and only if
\begin{equation} \label{condition1_iff}
\frac{\sigma_0^2}{ \beta^{2\nu+1}_0 }g(\nu,\mu_0) =\frac{\sigma_1^2}{ \beta^{2\nu+1}_1 }g(\nu,\mu_1).
\end{equation}
where $g(\nu,\mu)={\Gamma(\mu+1)  } /{ \Gamma(2\nu+\mu+1)}$. A straight  consequence is that for fixed $\nu$, the $\beta$, $\mu$ and $\sigma^2$ parameters cannot be estimated consistently
under fixed domain asymptotics.  Instead, the microergodic parameter  $$c(\bm{\theta})=\frac{ \sigma^{2}}{\beta^{1+2\kappa}}g(\nu,\mu)$$
is consistently estimable. Additionally, using Theorem 8 in \cite{Bevilacqua:20189},  for any fixed  $\nu$ and $\mu\geq \lambda(d,\nu)+3$ as $n\to\infty$, the asymptotic distribution of  ML estimator  of the microergodic parameter is given by
$$ \sqrt{n}\left( \frac{\hat{\sigma}_{n}^{2}}{ \hat{\beta}_n^{2\nu+1} }g(\nu,\mu)-c(\bm{\theta}) \right)  \stackrel{\mathcal{D}}{\longrightarrow}  N\left(0,2c(\bm{\theta})^2\right). $$
where $\hat{\beta}_{n}$ and $\hat{\sigma}_{n}^{2}$ are ML estimators of $\beta$ and $\sigma^2$.

We analyze the performance of the  ML method   when estimating
the parameters  of the covariance model $\sigma^2\varphi_{\nu,\mu,\beta}$ from  both increasing  and fixed domain asymptotics  perspective.
In particular we focus on assessing the approximation given by  the asymptotic distribution of the ML estimation under both types of asymptotics.

We first simulate  $500$ realizations
of a zero mean Gaussian RF   with  covariance model  $\sigma^2\varphi_{\nu,\mu,\beta}$
observed over $n=1000$ location sites uniformly distributed in the unit square.
The smoothness  parameter is assumed  to be known and fixed  equal to $\nu=0, 1, 2$. We  set  $\sigma^2=1$, $\mu= \lambda(2,\nu)+x$ with $x=1, 2, 4$ and since the increasing as well as the fixed-domain frameworks can
be mimicked by fixing the number of location sites over a given spatial domain and  decreasing or increasing the spatial dependence
\citep{Zhang:Zimmerman:2005,Shaby:Kaufmann:2013},
we set the $\beta$ parameter, such that   the compact support  $\delta_{\nu,\mu,\beta}$   is identically equal to $0.15$ and $0.6$ for each scenario.
For instance, when $\nu=0$ and $\mu= \lambda(2,0)+2=3.5$ then $\beta=0.15/3.5$ to obtain a compact support equal to $\delta_{0,3.5,\beta} =0.15$.

In the  ML estimation of $(\sigma^2,\beta,\mu)^{\top}$ for the covariance model $\sigma^2\varphi_{\nu,\mu,\beta}$, we found  a reparameterization of the $\mu$ parameter to be useful by considering its inverse.
That is, we consider the ML estimation of $(\sigma^2,\beta,\mu^*)^{\top}$ where
  $\mu^{*}=1/\mu \in [0, 1/\lambda(2,\nu)]$ for the covariance model $\sigma^2\varphi_{\nu,1/\mu^*,\beta}$. 
  In the original parameterization, we found high variability in the ML estimates of the $\mu$ parameter, particularly for large values  of $\mu$.
  A similar pattern has been observed in literature when estimating  the degrees of freedom of the Student's $t$ distribution;
to alleviate this issue some authors  \citep{Ciccio:Monti:2011,AR22} propose to considering the estimation of   the inverse degrees of freedom.

Figure \ref{cont22m} reports
the boxplots of the centered and rescaled ML estimates $( \widehat{\mu_i^*}-\mu^* )/\sqrt{f_{11}}$, $( \widehat{\beta}_i-\beta )/\sqrt{f_{22}}$, $( \widehat{\sigma^2_i}-\sigma^2 )/ \sqrt{f_{33}}$, $i=1,\ldots,500$  (first, second and third rows, respectively),
when $\nu=0, 1, 2$ (first, second and third column respectively),  $\mu= \lambda(2,\nu)+x$ with $x=1, 2, 4$ ( for each subfigure )
 by considering increasing and fixed domain asymptotics scenarios $\delta=0.15, 0.6$ (left and right part of each subfigure respectively). Here $f_{ii}$ are the diagonal elements of the inverse of the Fisher information matrix
 in Equation (\ref{eq:fisher}).
 Using the asymptotic results under increasing domain asymptotics the displayed boxplots should be similar  to the boxplot of a Gaussian random variable.
 Overall  the asymptotic distribution seems to work reasonably well (at  least for  values between the  first and third quartiles)
 and, as expected, the asymptotic approximation worsens when switching from the increasing domain  ($\delta=0.15$)  to the fixed domain  ($\delta=0.60$) setting,
irrespective of  the values of $\mu$ and $\nu$.


\begin{figure}
\centering

\begin{tabular}{ccccccc}
& \multicolumn{2}{c}{\;\;\;\;\;\;$\nu=0$} &\multicolumn{2}{c}{\;\;\;\;\;\;$\nu=1$}&\multicolumn{2}{c}{\;\;\;\;\;\;$\nu=2$}\\
\vspace{-0.8cm}
&\;\;\;\;\;\;\; $\delta=0.15$ &$\delta=0.6$&\;\;\;\;\;\;\;  $\delta=0.15$ &$\delta=0.6$&\;\;\;\;\;\;\; $\delta=0.15$ &$\delta=0.6$\\\\[0.01pt]
\vspace{3.5cm}
\multirow{3}{*}{\begin{tabular}[c]{c}\\\\\\$\widehat{\mu^*}$ \end{tabular}}
\hspace{-0.9cm}
& \multicolumn{2}{c}{\multirow{3}{*}{\includegraphics[width=5.3cm, height=5.3cm]{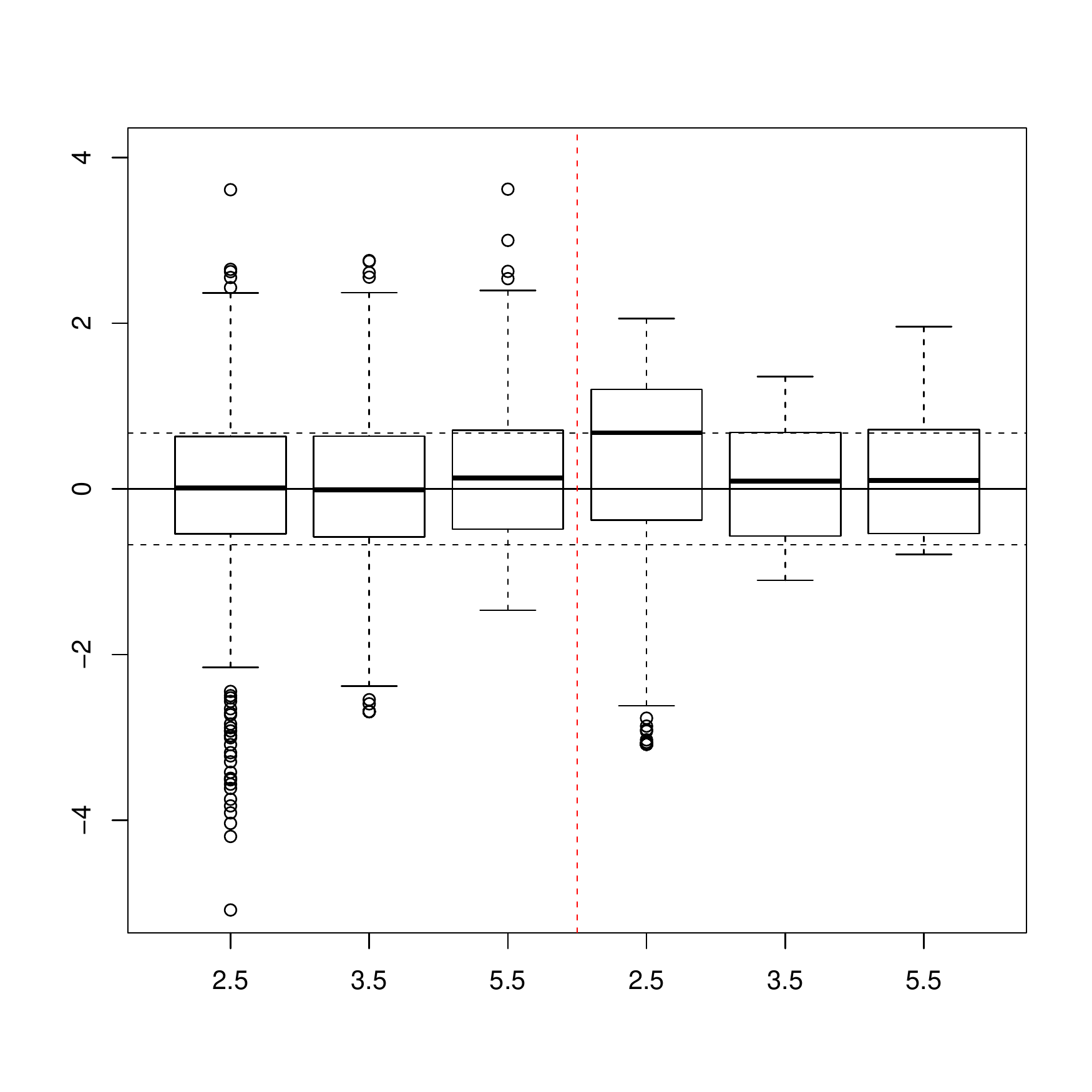}}}
\hspace{-0.9cm}
&\multicolumn{2}{c}{\multirow{3}{*}{\includegraphics[width=5.3cm, height=5.3cm]{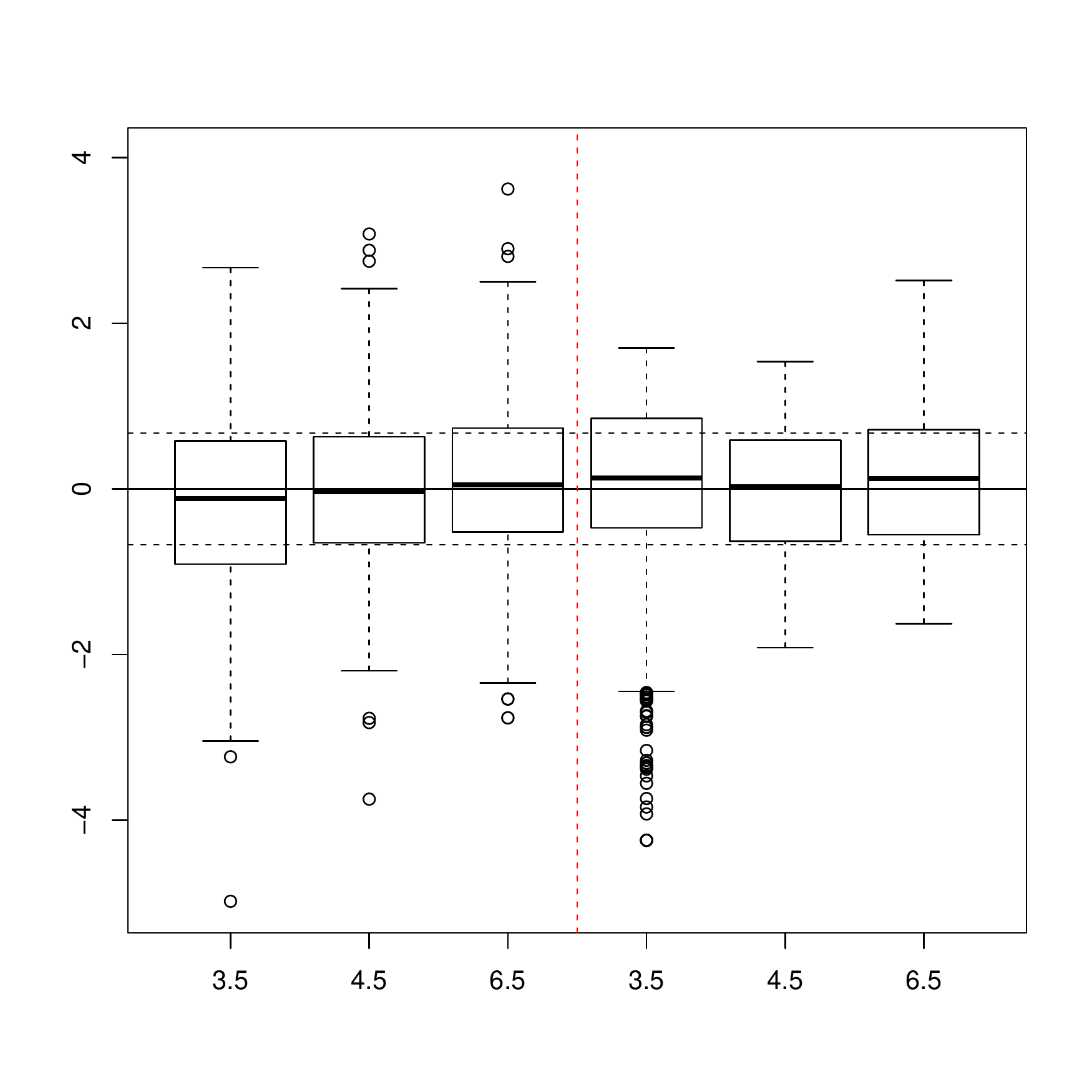}}}
\hspace{-0.9cm}
&\multicolumn{2}{c}{\multirow{3}{*}{\includegraphics[width=5.3cm, height=5.3cm]{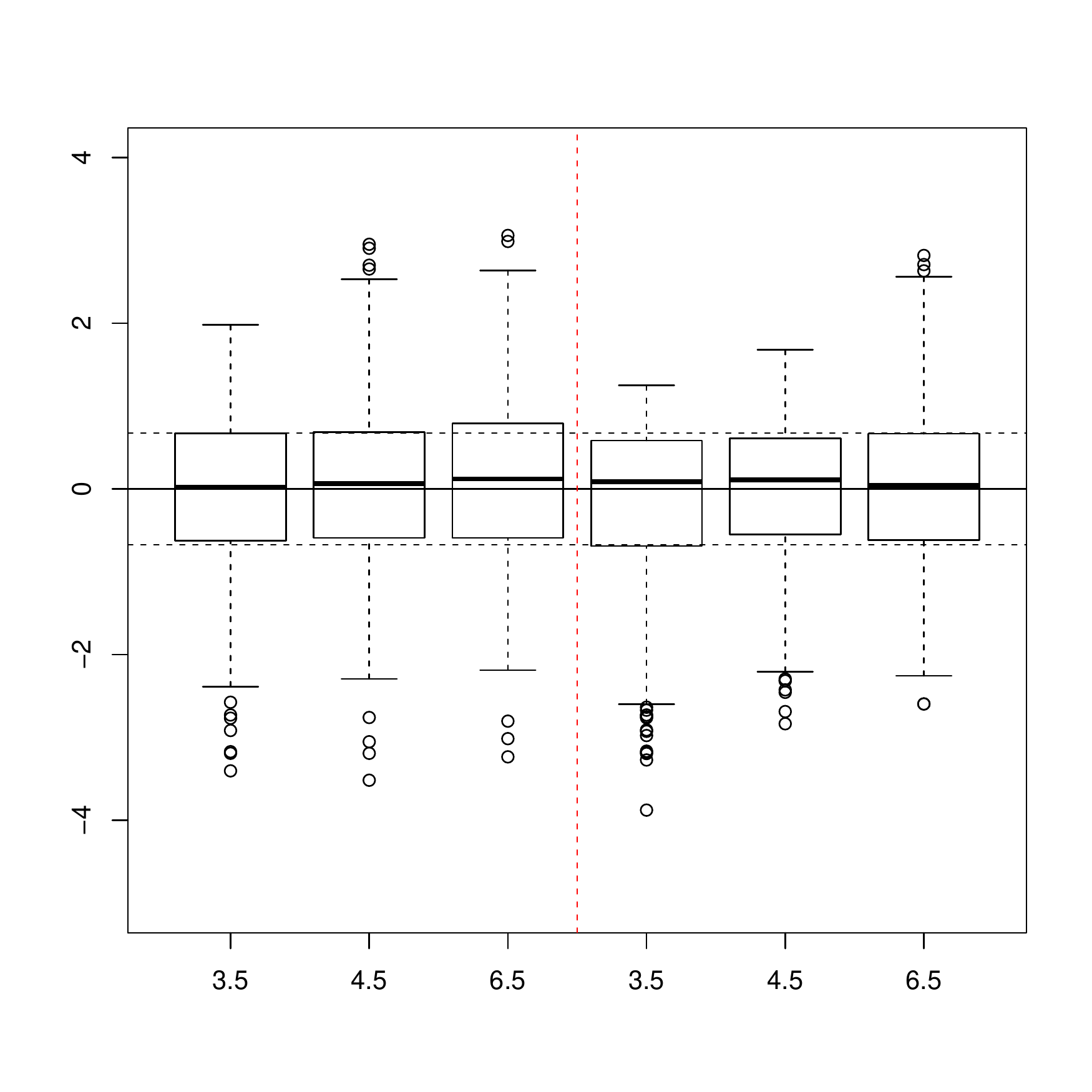}}} \\
\vspace{3.5cm}
\multirow{3}{*}{\begin{tabular}[c]{@{}ccc@{}}\\\\\\$\widehat{\beta}$\end{tabular}}
\hspace{-0.9cm}
& \multicolumn{2}{c}{\multirow{3}{*}{\includegraphics[width=5.3cm, height=5.3cm]{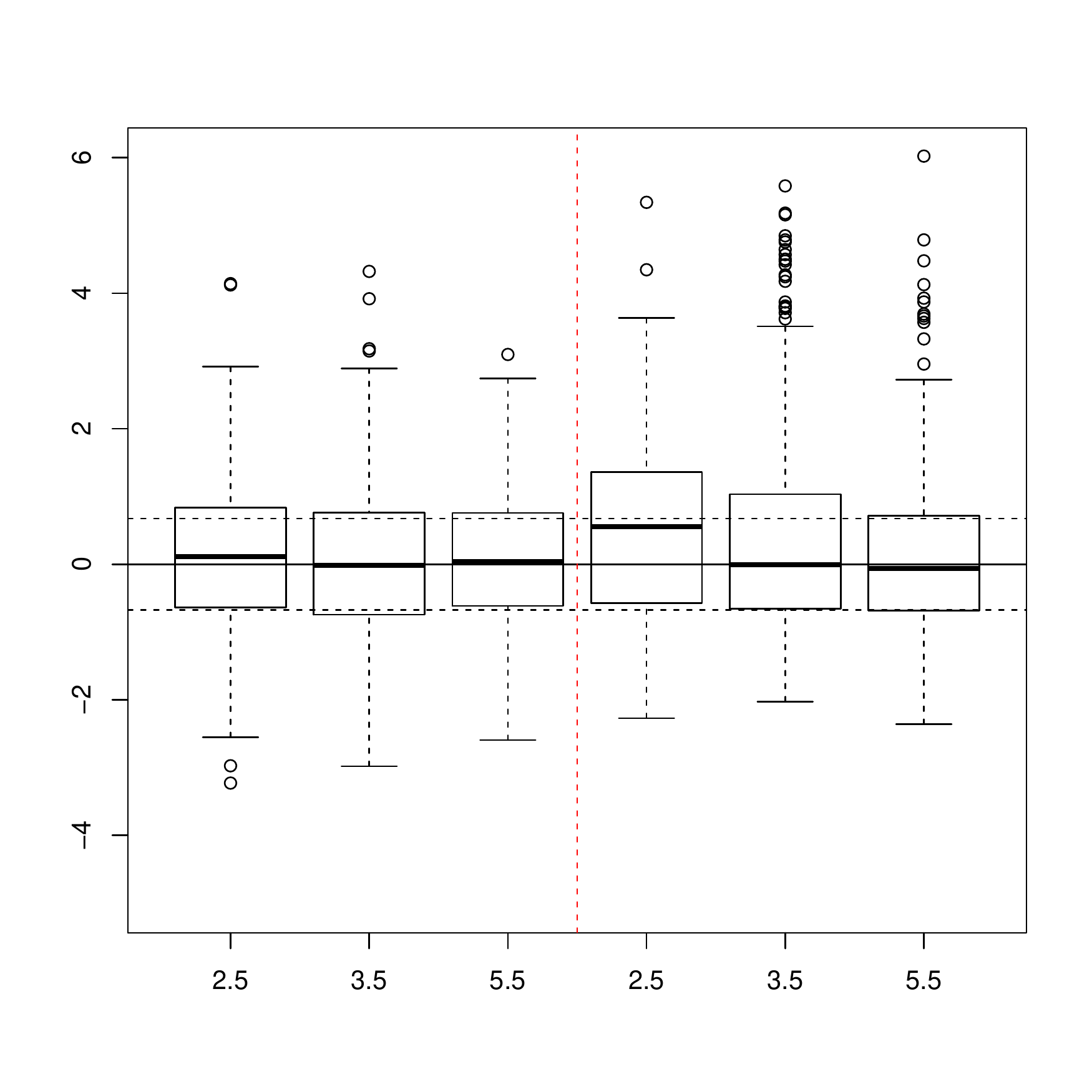}}}
\hspace{-0.9cm}
&\multicolumn{2}{c}{\multirow{3}{*}{\includegraphics[width=5.3cm, height=5.3cm]{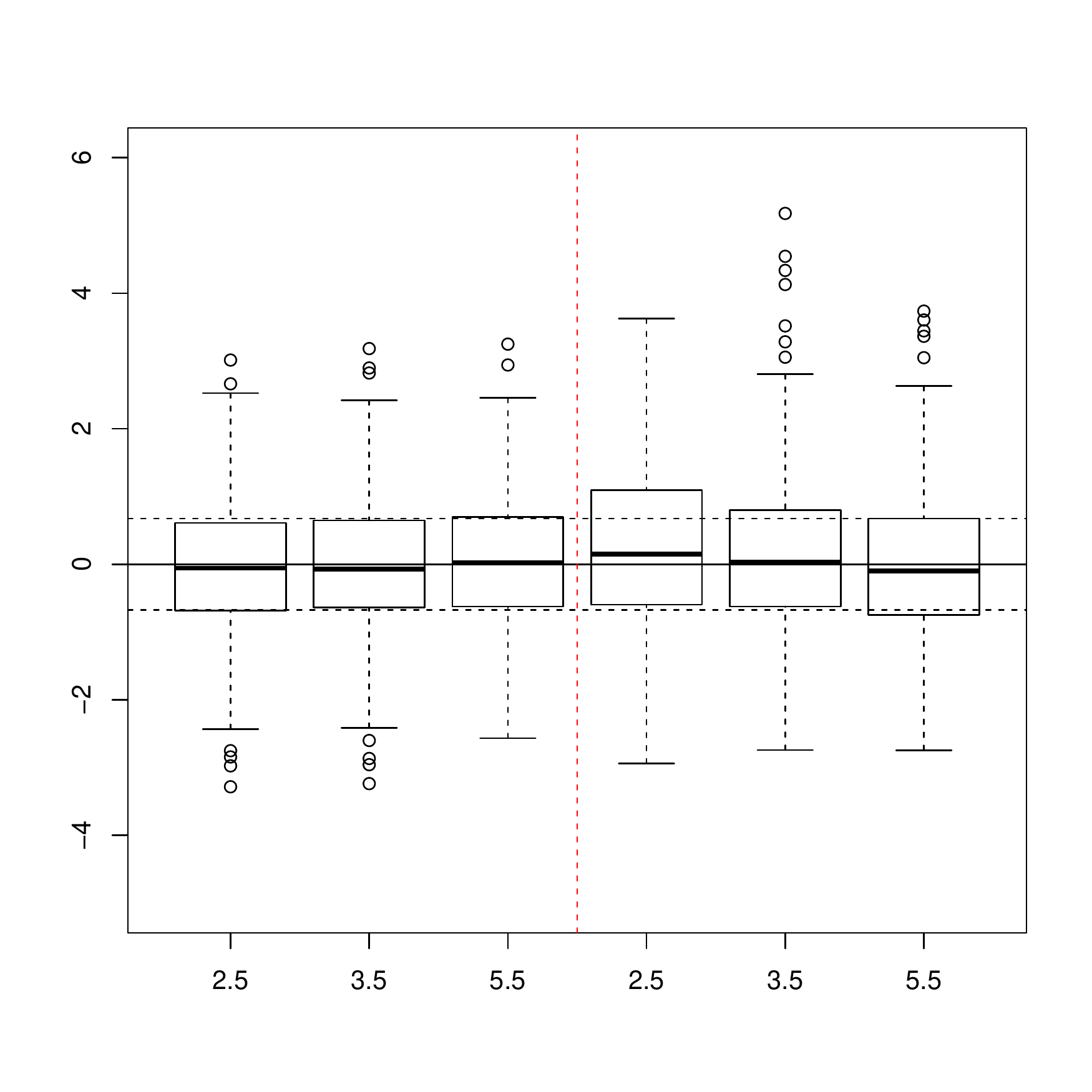}}}
\hspace{-0.9cm}
&\multicolumn{2}{c}{\multirow{3}{*}{\includegraphics[width=5.3cm, height=5.3cm]{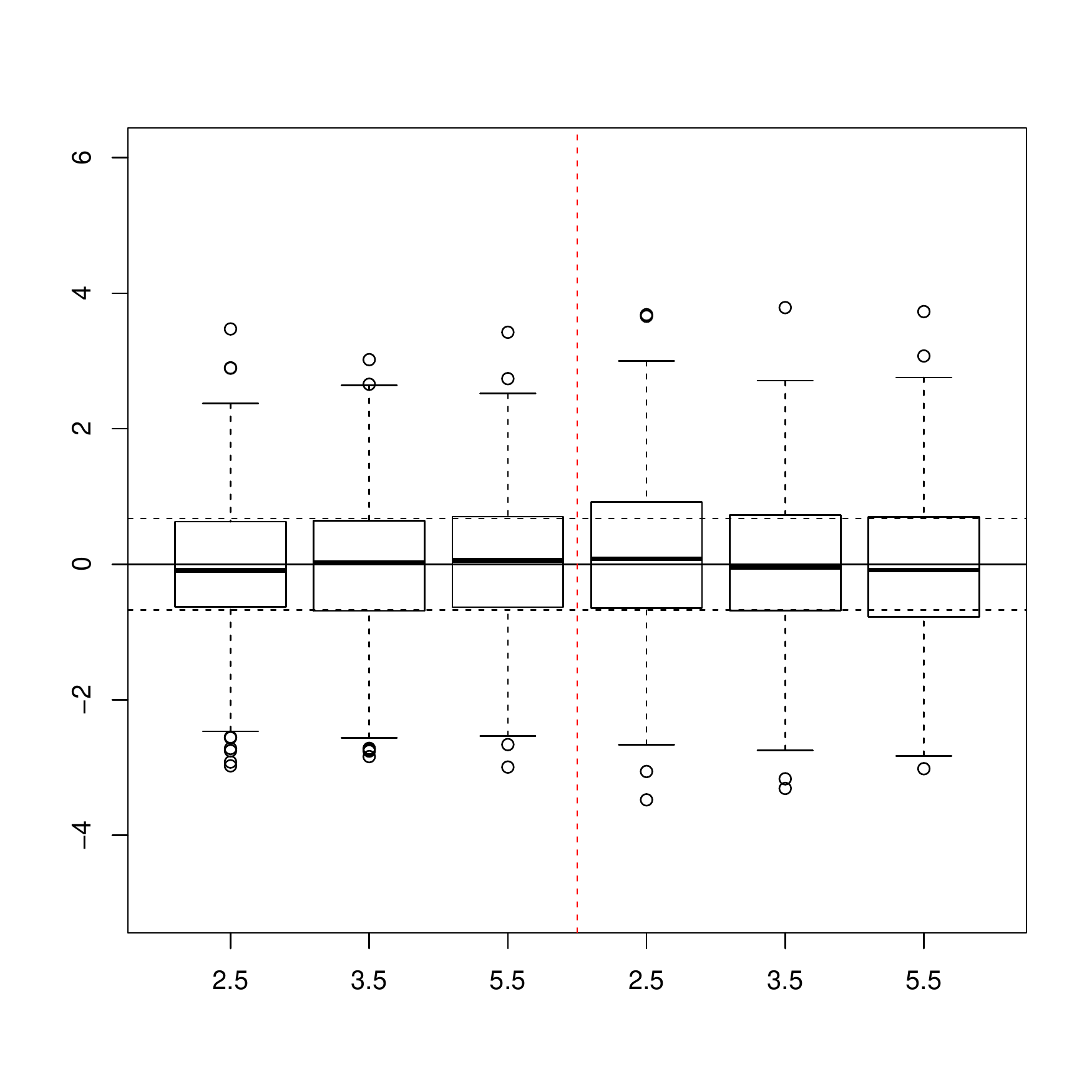}}} \\
\vspace{3.5cm}
\multirow{3}{*}{\begin{tabular}[c]{@{}ccc@{}}\\\\\\$\widehat{\sigma}^2$\end{tabular}}
\hspace{-0.9cm}
&\multicolumn{2}{c}{\multirow{3}{*}{\includegraphics[width=5.3cm, height=5.3cm]{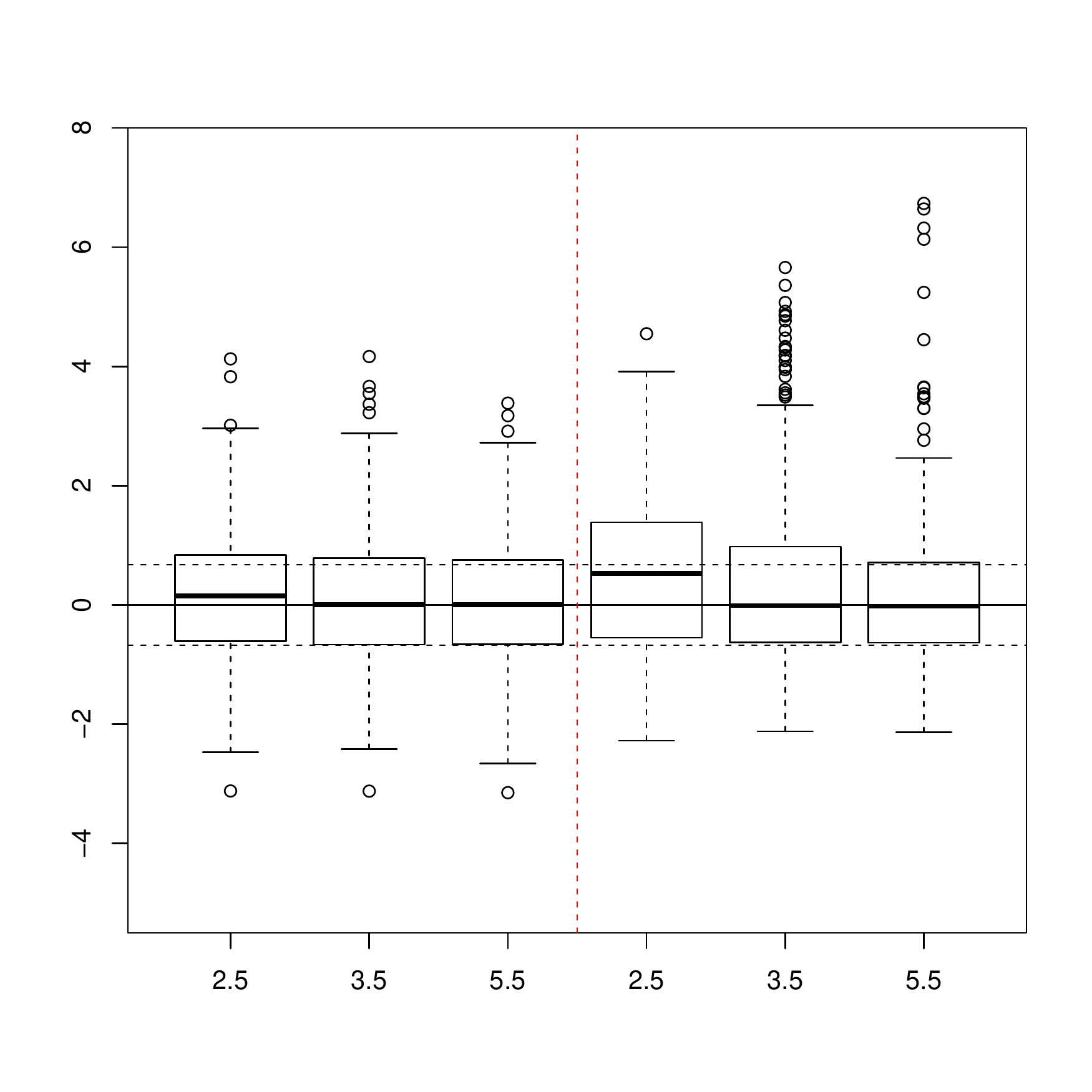}}}
\hspace{-0.9cm}
&\multicolumn{2}{c}{\multirow{3}{*}{\includegraphics[width=5.3cm, height=5.3cm]{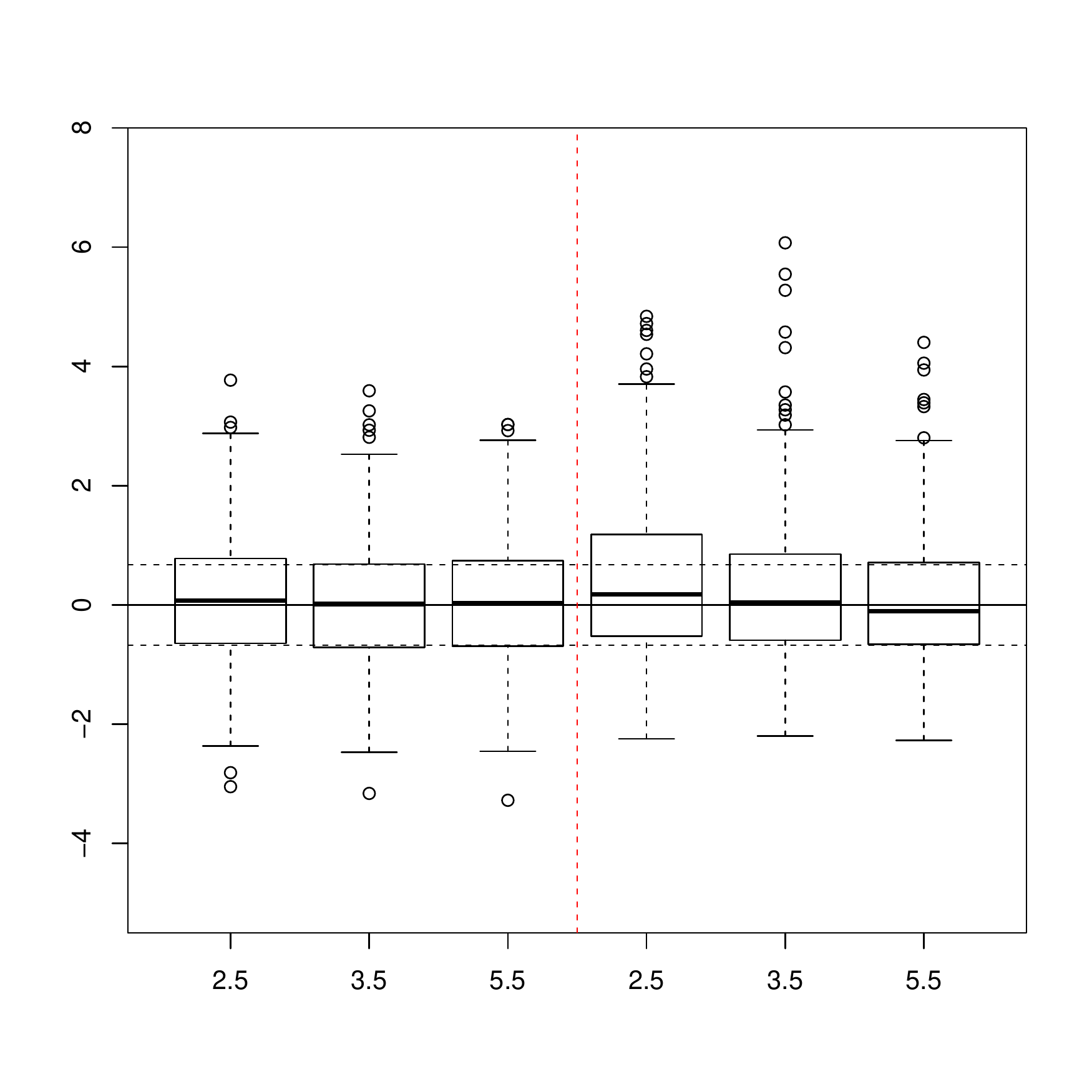}}}
\hspace{-0.9cm}
&\multicolumn{2}{c}{\multirow{3}{*}{\includegraphics[width=5.3cm, height=5.3cm]{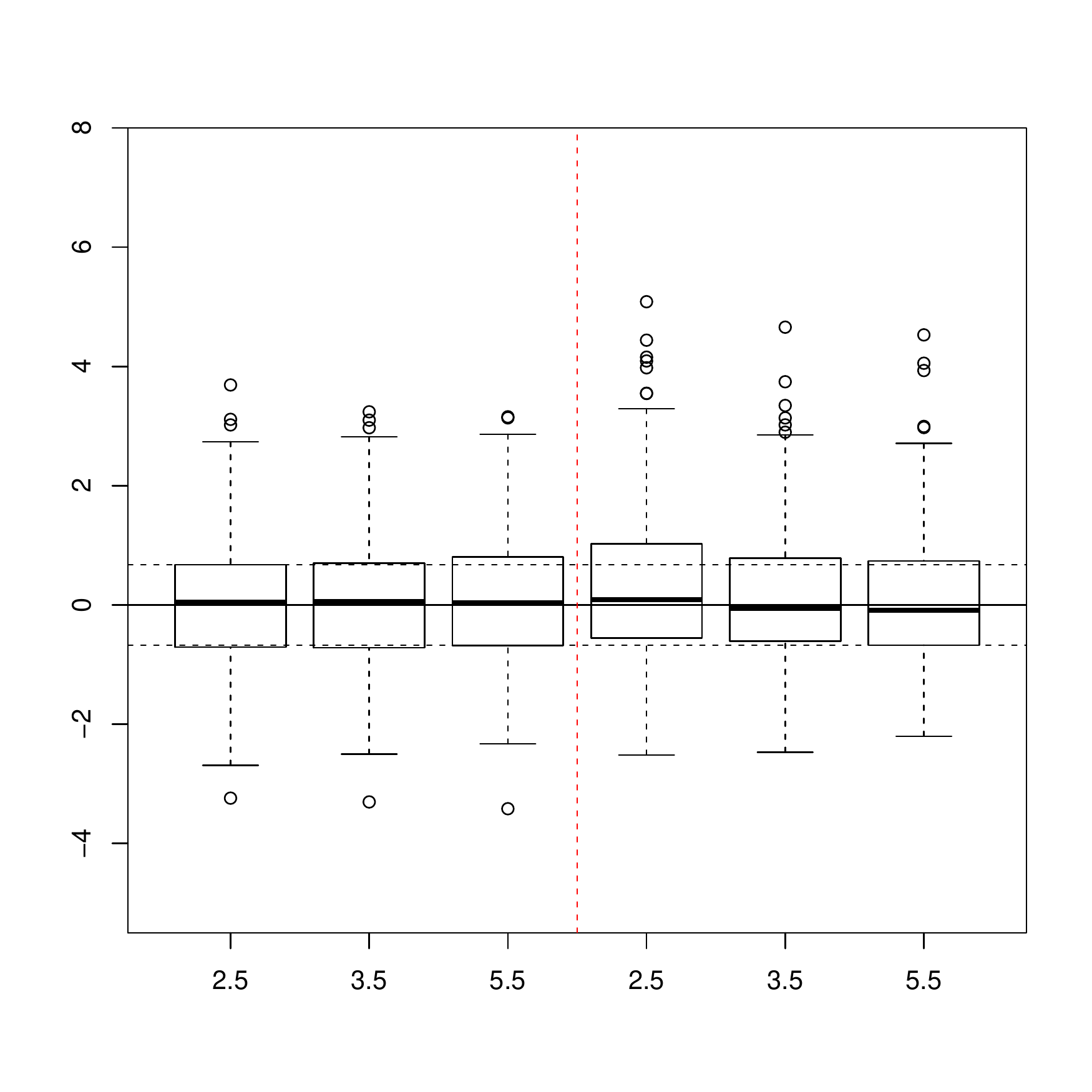}}} \\
\vspace{3.5cm}
\multirow{3}{*}{\begin{tabular}[c]{@{}ccc@{}}\\\\\\$m( \widehat{\sigma}_i^{2},\widehat{\beta}_i)$ \end{tabular}}
\hspace{-0.9cm}
&  \multicolumn{2}{c}{\multirow{3}{*}{\includegraphics[width=5.3cm, height=5.3cm]{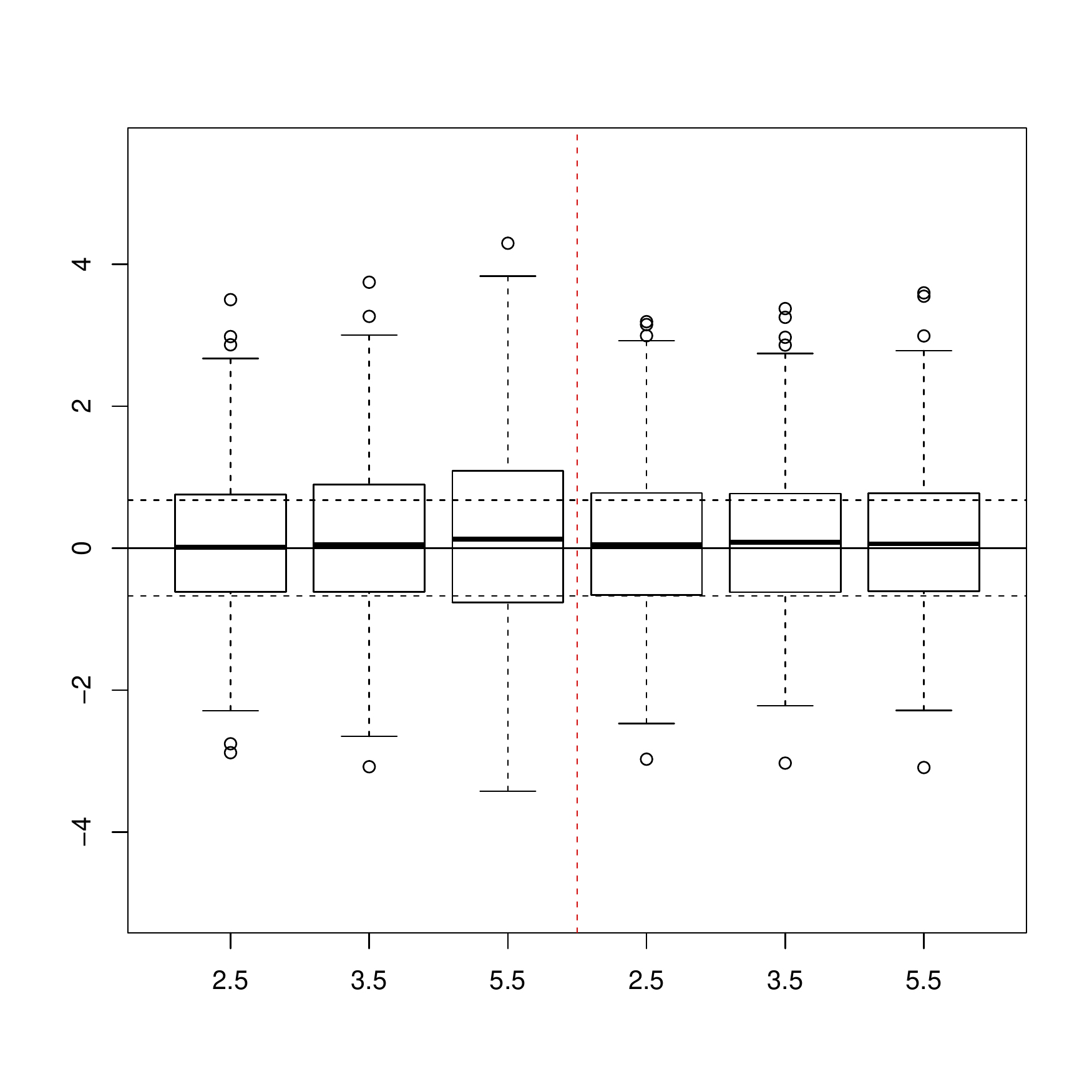}}}
\hspace{-0.9cm}
&\multicolumn{2}{c}{\multirow{3}{*}{\includegraphics[width=5.3cm, height=5.3cm]{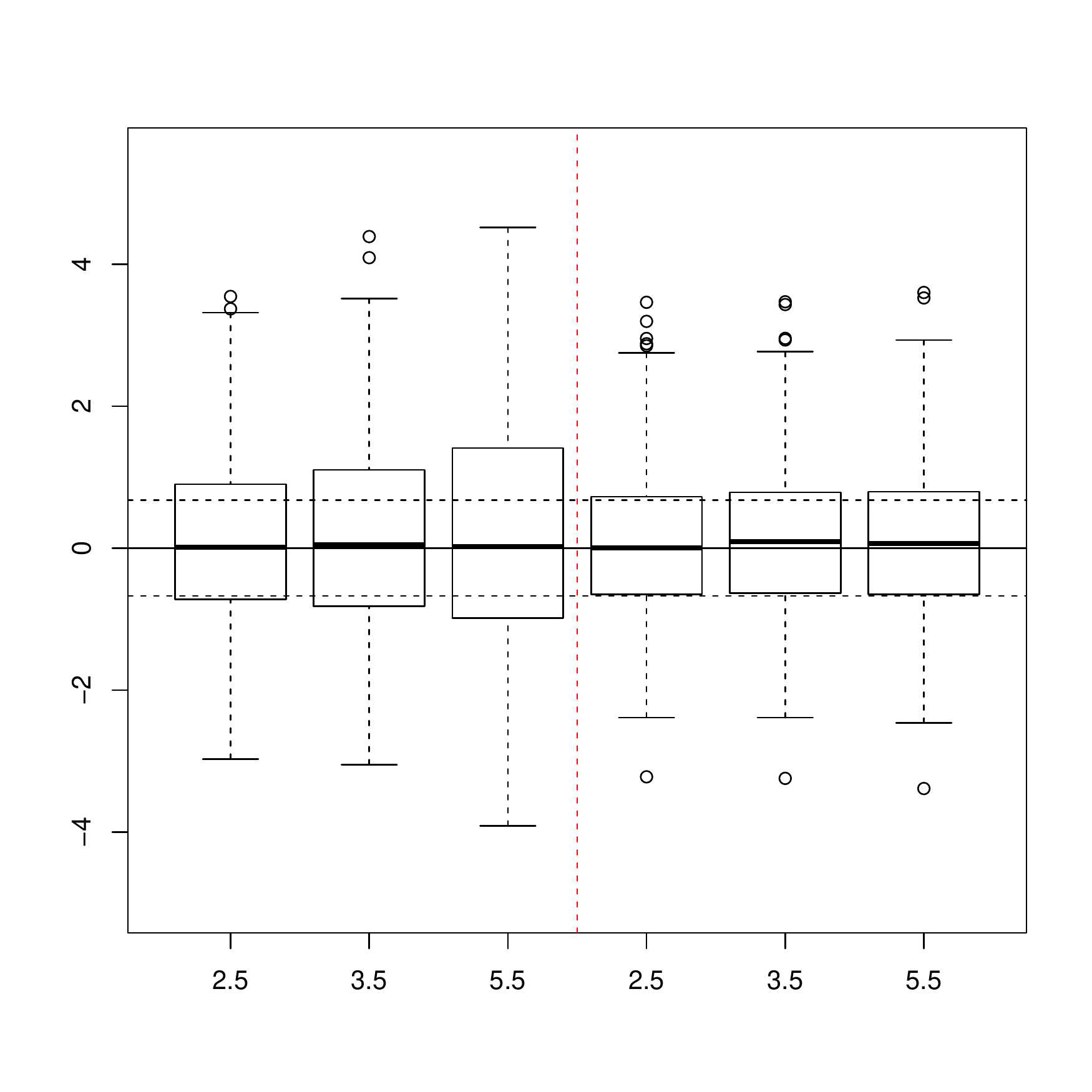}}}
\hspace{-0.9cm}
&\multicolumn{2}{c}{\multirow{3}{*}{\includegraphics[width=5.3cm, height=5.3cm]{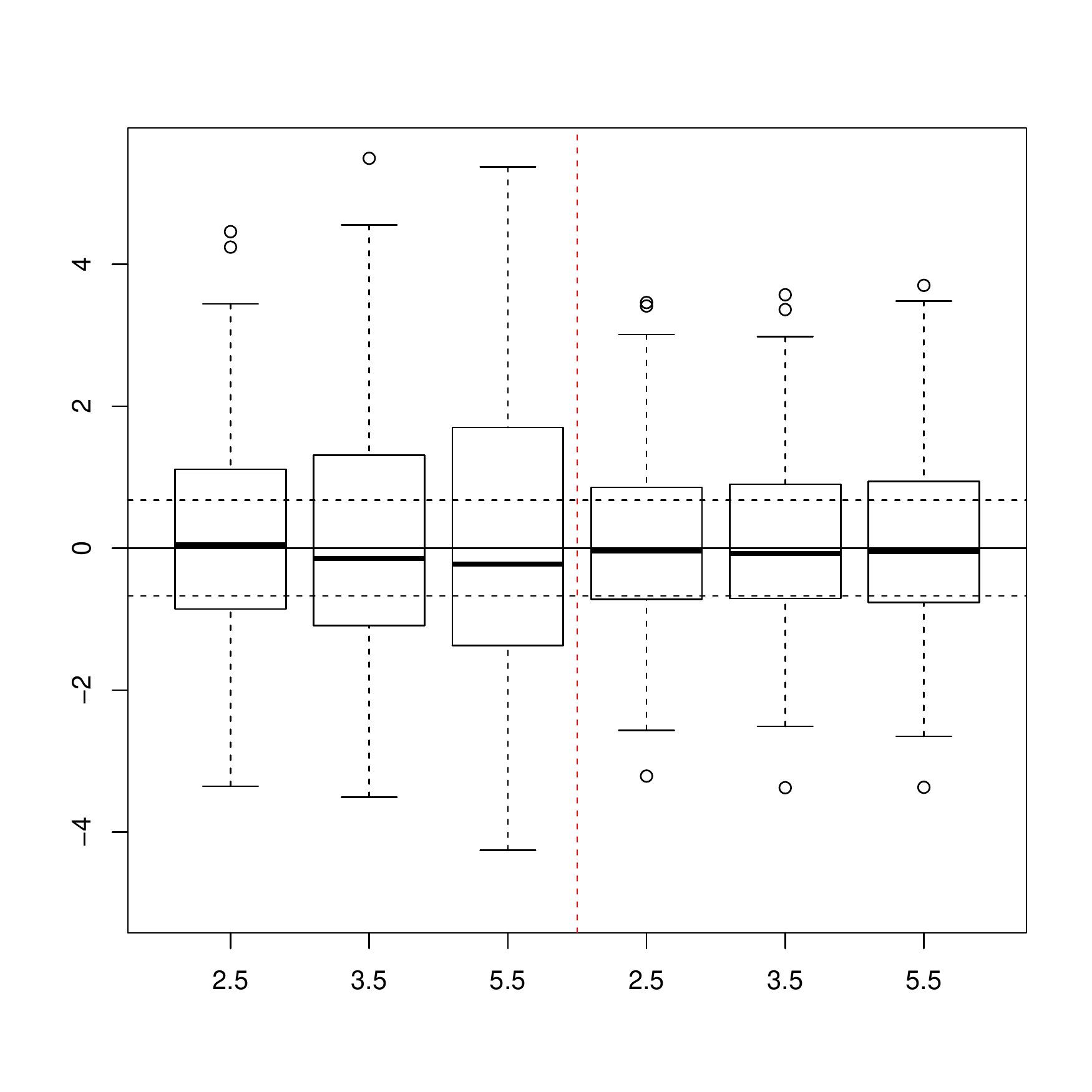}}} \\\\\\
\end{tabular}
\hspace{1.2cm}

\caption{Boxplots  of the centered and rescaled ML estimates  $( \widehat{\mu_i^*}-\mu^* )/\sqrt{f_{11}}$, $( \widehat{\beta}_i-\beta )/\sqrt{f_{22}}$, $( \widehat{\sigma^2_i}-\sigma^2 )/ \sqrt{f_{33}}$,  $i=1,\ldots,500$
of the covariance  model $\sigma^2\varphi_{\nu,1/\mu^*,\beta}$ with $\mu^*=1/\mu$ when $\nu=0, 1, 2$, $\mu= \lambda(2,\nu)+ x$, $x=1, 2, 4$ and
$\beta$ is such that  the  compact support   $\delta_{\nu,\mu,\beta}=0.15, 0.6$    (left  and right part of each subfigure). Last row:  boxplots of
  $m( \widehat{\sigma}_i^{2},\widehat{\beta}_i)=\sqrt{n/2} ( \widehat{\sigma^2}_i(\widehat{\beta}_i,\mu) \widehat{\beta}_i^{-(1+2\kappa)}/\sigma^{2} \beta^{-(1+2\kappa)}-1 )$, $i=1,\ldots,500$.
  The horizontal dotted lines represent the quantiles of the order $0.25$ and $0.75$
of the standard Gaussian distribution.}  \label{cont22m}
\end{figure}

To  analyze the approximation given by the asymptotic distribution under fixed domain  of the microergodic parameter $\sigma^{2} \beta^{-(1+2\kappa)}g(\nu,\mu)$, we replicate the previous numerical experiment using the same simulation settings
 but this time, we  assume that $\mu$ is known and fixed.
Last row  of Figure \ref{cont22m}  depicts  the  boxplots of    $m( \widehat{\sigma}_i^{2},\widehat{\beta}_i)=\sqrt{n/2} ( \widehat{\sigma}_i^{2} \widehat{\beta}_i^{-(1+2\kappa)}/\sigma^{2} \beta^{-(1+2\kappa)}-1 )$,
$i=1,\ldots, 500$, for each $\nu$ and $\mu$.
Also in this case the boxplots should be similar to the boxplot of a standard Gaussian random variable.
As expected, the asymptotic approximation works much better under fixed domain asymptotics ($\delta=0.60$), and it seems to improve with decreasing $\nu$.
In addition, under increasing domain the approximation   clearly deteriorates when increasing both $\mu$ and $\nu$.

\section{Data Examples}\label{sec6}
We consider two data examples that explain, from our perspective,
how the proposed model should be used depending on the size of the available dataset.
 The first  approach involves the estimation of the $\mu$ parameter and should be applied to (not necessarily) small  spatial datasets with the goal of looking for an improvement of the Mat{\'e}rn family
 from modeling viewpoint.
The second approach is more suitable for large datasets and considers an arbitrary fixed $\mu$. In this case  the goal is to seek
highly  sparse matrices   to reduce the computational complexity.

\subsection{{\bf Application to Mean Temperature Data}}

We consider data from
 WorldClim (\texttt{www.worldclim.org}) a global  database of high spatial resolution global weather and climate data
  for the years 1970-2000 \citep{wc2}.
 In particular, we consider  mean temperature data of September over a specific region of French
 (see Figure \ref{figusa} (a))
observed at $624$ geo-referenced location sites.
Following \cite{LI20111445}, we first detrend the data using splines to remove the cyclic
pattern of both variables along the longitude and latitude directions, and then regard the residuals  $y(\bm{s}_i)$, $i=1,\ldots,n$, $n=624$
as a realization from a zero mean  Gaussian RF with isotropic covariance function $\sigma^2\rho(r)$ (the empirical semi-variogram  is depicted in Figure \ref{figusa} (b)).
For the  isotropic correlation function  $\rho(r)$
 we specify the proposed reparametrized Generalized Wendland correlation model.
In particular, we consider  $\varphi_{\nu,\mu,\beta}$  for $\nu=0, 1$ and the associated special limit case,
that is the Mat{\'e}rn  model ${\cal M}_{\nu+1/2,\beta}$ for $\nu=0, 1$.

\begin{figure}[h!]
\centering{
\begin{tabular}{cc}
  \includegraphics[width=5.2cm, height=5.2cm]{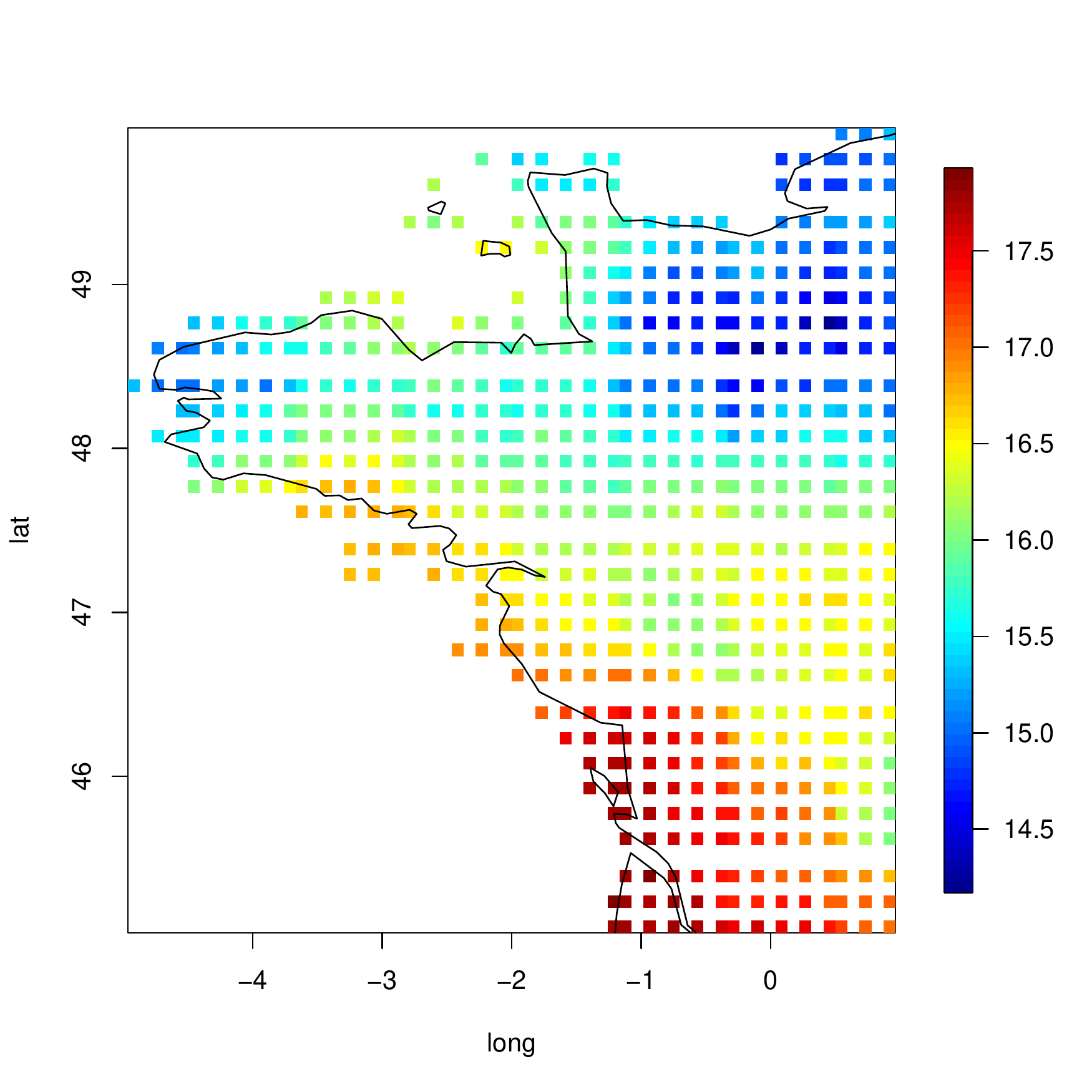}& \includegraphics[width=5.2cm, height=5.2cm]{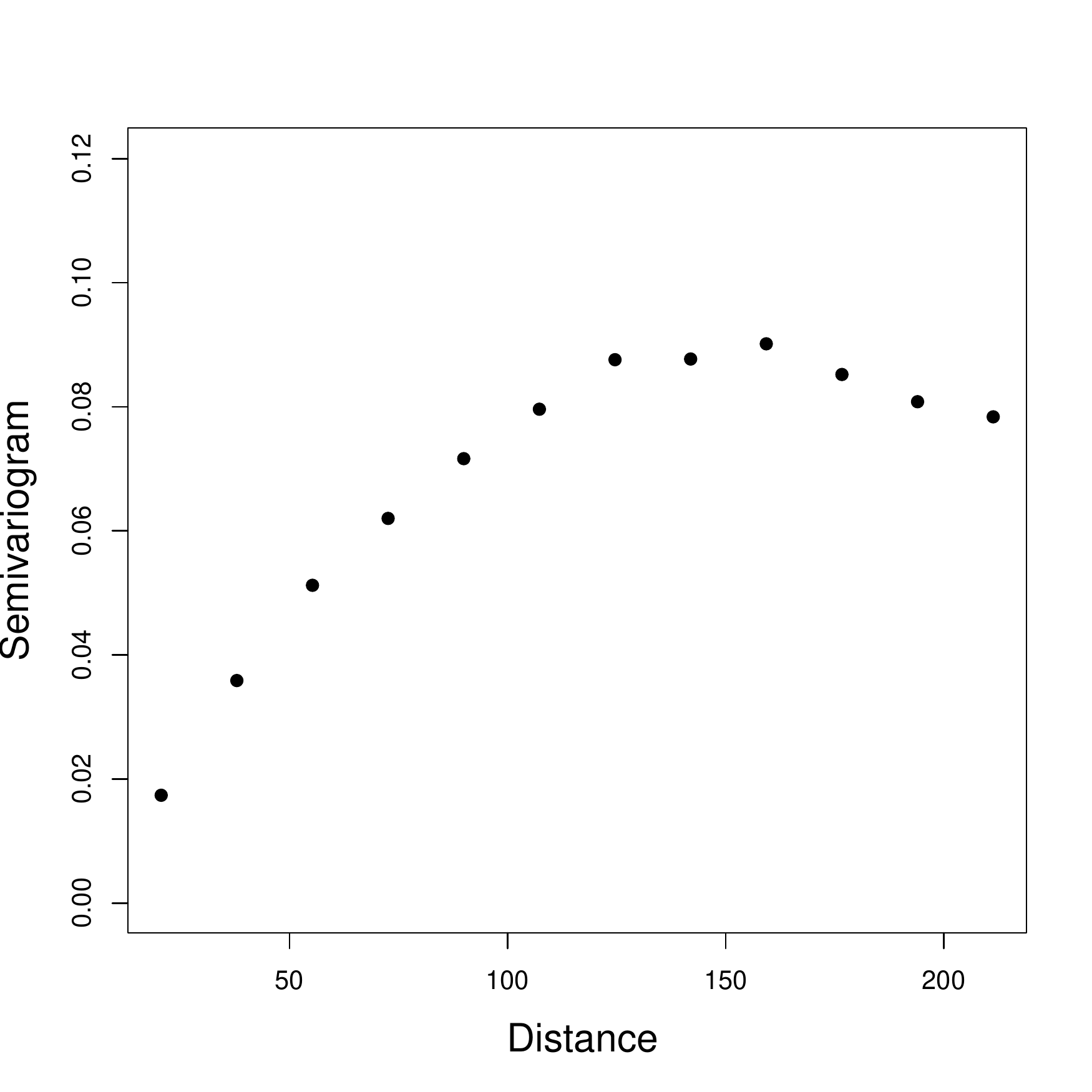}\\
  (a)&(b)
  \end{tabular}
    \caption{From left to right: a) spatial locations of   mean temperature of September  in a specific region of  France
  and b)  empirical semivariogram of the residuals after detrending the original data.}\label{figusa}}
\end{figure}

Here, we adopt an increasing domain approach  by estimating   all parameters of the covariance models  with ML  method,
  and we compute the associated standard error estimation,  as the square root of diagonal elements of the inverse of the Fisher Information matrix (\ref{eq:fisher}).
For the $\mu$ parameter  we use the parameterization described in Section \ref{mlll}  that is, $\mu^*=1/\mu \in (0,\lambda(2,\nu)^{-1}]$ and the
Mat{\'e}rn  model, under this parametrization,  is attained when  $\mu^* \to 0$.

The results of the estimation are summarized in Table   \ref{tabnn}, where we also report the values of the maximized log-likelihoods and the associated values of Akaike information criterion (AIC).
It can be appreciated that the covariance model  $\sigma^2\varphi_{0,\mu^*,\beta}$  achieves the  lower AIC with respect to the Mat{\'e}rn  model
 $\sigma^2{\cal M}_{0.5,\beta}$. When $\nu=1$, the reparametrized
 Generalized Wendland model $\sigma^2\varphi_{1,\mu^*,\beta}$
 coincides with
 the  Mat{\'e}rn  model
 ${\cal M}_{1.5,\beta}$ since the estimation of the $\mu^*$ parameter collapses to the lower bound. For this reason
  in Table   \ref{tabnn} we only report the estimates of  the covariance model $\sigma^2{\cal M}_{1.5,\beta}$.
  Overall the best fitted covariance model is $\sigma^2\varphi_{0,\mu^*,\beta}$. Figure \ref{fig114}  provides a graphical
comparison between the empirical and estimated semivariograms using  the
$\sigma^2{\cal M}_{0.5,\beta}$ and   $\sigma^2\varphi_{0,\mu^*,\beta}$ covariance models,  respectively.

  \begin{figure}[h!]
\centering{
\begin{tabular}{cc}
  \includegraphics[width=5.2cm, height=5.2cm]{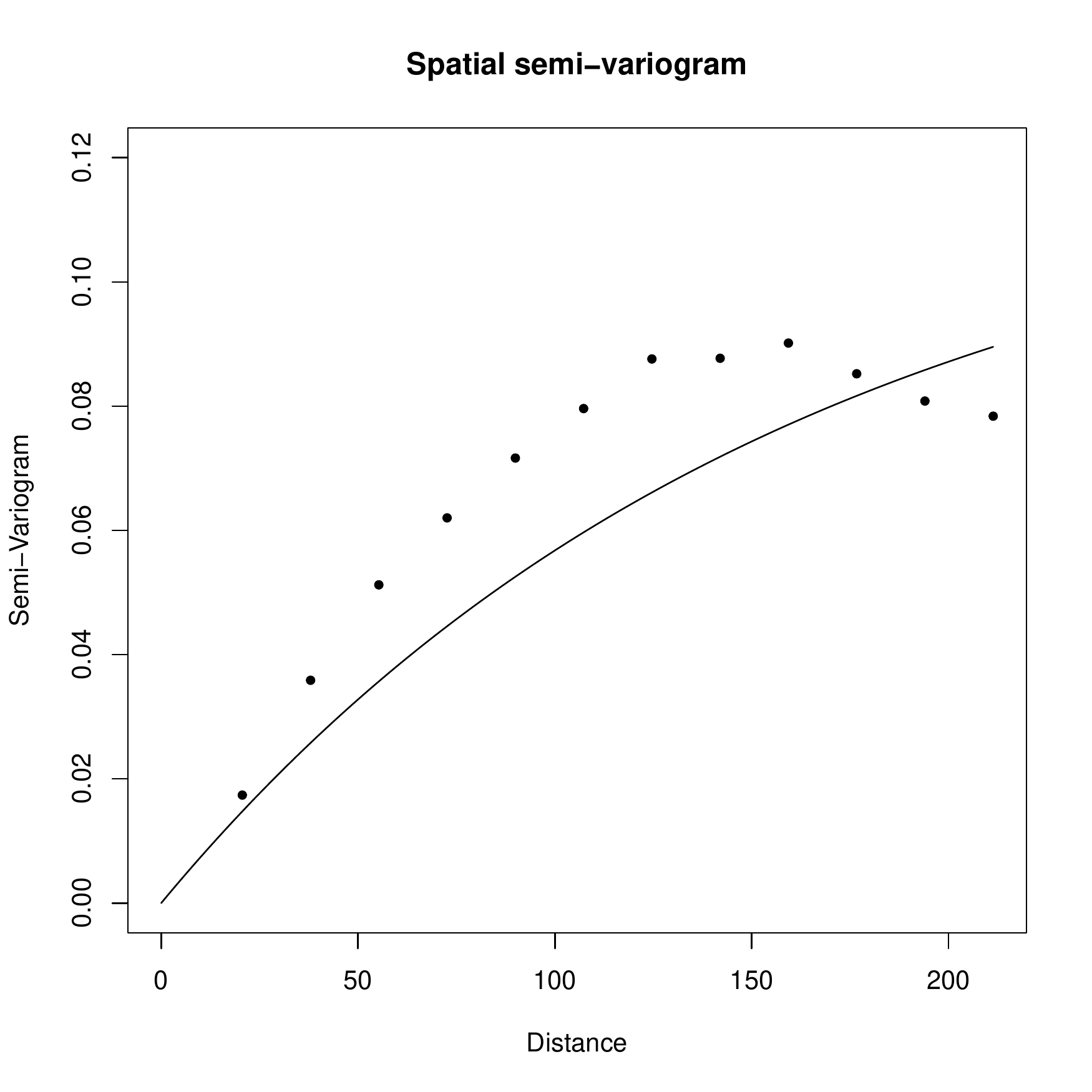}& \includegraphics[width=5.2cm, height=5.2cm]{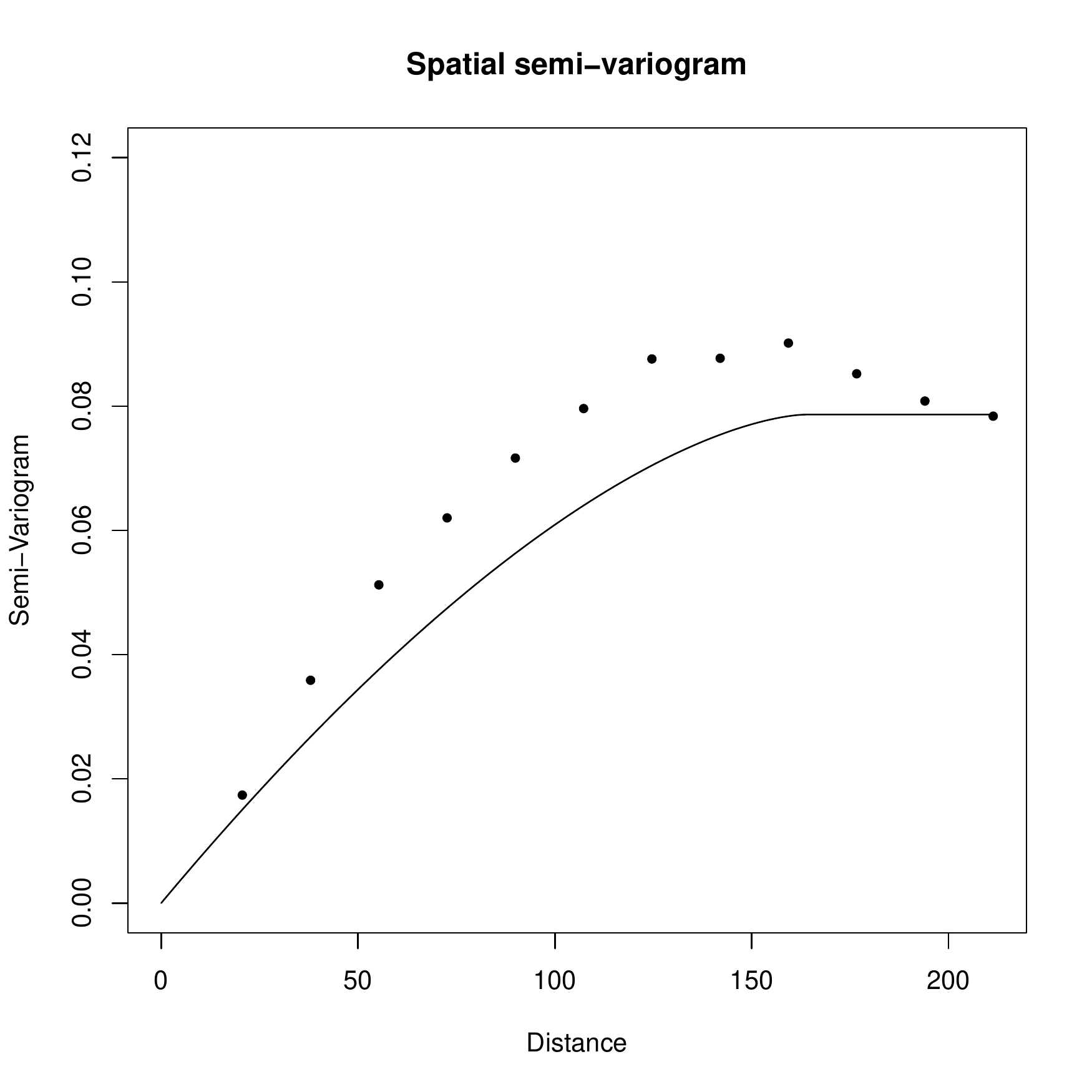}\\
  \end{tabular}
    \caption{
    Empirical semivariogram versus estimated semivariogram using the Mat{\'e}rn model  $\sigma^2{\cal M}_{0.5,\beta}$ (left part)
and the  Generalized Wendland model $\sigma^2\varphi_{0,\mu^*,\beta}$ (right part) covariance models.}\label{fig114}}
\end{figure}

\begin{table}[htbp]
\caption{ML estimates with associated standard error (in parentheses), RMSE, LSCORE and CRPS  for the three Gaussian RFs
with underlying covariance functions  $\sigma^2\varphi_{0,\mu^*,\beta}$,
its special limit case $\sigma^2{\cal M}_{0.5,\beta}$ and  $\sigma^2{\cal M}_{1.5,\beta}$.} \label{tabnn}
\begin{center}
\scalebox{0.6}{
\begin{tabular}{c|c|c|c|c|c|c|c|c|}
\cline{2-9}
 & $\beta$ & $\sigma^2$ & $\mu^*$ & -loglik & AIC & RMSE & LSCORE & CRPS \\ \hline
\multicolumn{1}{|c|}{\multirow{2}{*}{$\sigma^2{\cal M}_{0.5,\beta}$}} & \multirow{2}{*}{\begin{tabular}[c]{@{}c@{}}$160.0165$\\ $(79.263)$\end{tabular}} & \multirow{2}{*}{\begin{tabular}[c]{@{}c@{}}$0.1222$\\ $(0.059)$\end{tabular}} & \multirow{2}{*}{} & \multirow{2}{*}{$464.25$} & \multirow{2}{*}{$-924.5$} & \multirow{2}{*}{$0.0932$} & \multirow{2}{*}{$0.1965$} & \multirow{2}{*}{$0.0923$} \\
\multicolumn{1}{|c|}{} &  &  &  &  &  &  &  &  \\ \hline
\multicolumn{1}{|c|}{\multirow{2}{*}{$\sigma^2{\cal\varphi}_{0,\mu^*,\beta}$}} & \multirow{2}{*}{\begin{tabular}[c]{@{}c@{}}$103.7678$\\ $(7.239)$\end{tabular}} & \multirow{2}{*}{\begin{tabular}[c]{@{}c@{}}$0.0787$\\ $(0.008)$\end{tabular}} & \multirow{2}{*}{\begin{tabular}[c]{@{}c@{}}$0.6342$\\ $(0.047)$\end{tabular}} & \multirow{2}{*}{$470.86$} & \multirow{2}{*}{$-935.7$} & \multirow{2}{*}{$0.0926$} & \multirow{2}{*}{$0.1950$} & \multirow{2}{*}{$0.0916$} \\
\multicolumn{1}{|c|}{} &  &  &  &  &  &  &  &  \\ \hline
\multicolumn{1}{|c|}{\multirow{2}{*}{$\sigma^2{\cal M}_{1.5,\beta}$}} & \multirow{2}{*}{\begin{tabular}[c]{@{}c@{}}$16.3599$\\ $(0.899)$\end{tabular}} & \multirow{2}{*}{\begin{tabular}[c]{@{}c@{}}$0.06163$\\ $(0.008)$\end{tabular}} & \multirow{2}{*}{} & \multirow{2}{*}{$457.65$} & \multirow{2}{*}{$-911.3$} & \multirow{2}{*}{$0.0947$} & \multirow{2}{*}{$0.2627$} & \multirow{2}{*}{$0.0928$} \\
\multicolumn{1}{|c|}{} &  &  &  &  &  &  &  &  \\ \hline
\end{tabular}
}
\end{center}
\end{table}

We further evaluate the predictive performances of the three Gaussian RFs. We use the following resampling approach: we randomly choose 80\% of the spatial locations
and we use the remaining 20\%  as data for the predictions.
 We then use the estimates  in Table  \ref{tabnn}   to compute  three prediction scores \citep{Gneiting:Raftery:2007}
  for each Gaussian   RF.
Specifically, for each $j-th$ left-out  sample   $(y_j^L(\bm{s}_1), \ldots, y_j^L(\bm{s}_K))$, for $j=1,\ldots, 1000$ we compute
\begin{enumerate}
  \item the root mean squared error
\begin{equation*}
\overline{\mathrm{RMSE}}_j=\left[ \frac{1}{K} \sum_{i= 1}^K\left( y_j^L(\bm{s}_i)-\widehat{Y}_j^L(\bm{s}_i)\right)^2\right]^{\frac{1}{2}}
\end{equation*}
  \item  the logarithmic score
\begin{equation}\label{LSCORE}
\overline{\log S}_j=\frac{1}{K}\sum_{i=1}^{K}\left[  \frac{1}{2} \log \{2 \pi \sigma^L_j(\bm{s}_i)\}
+  \frac{1}{2} \{z^L_j(\bm{s}_i)\}^2 \right],
\end{equation}
 \item  the continuous ranked probability
\begin{equation}
\overline{{\rm CPRS}}_j= \frac{1}{K}\sum_{i=1}^{K}  \sigma_j(\bm{s}_i) \left (z^L_j(\bm{s}_i) \left ( 2\Phi(z^L_j(\bm{s}_i))-1\right ) +2 \Phi(z^L_j(\bm{s}_i)) -\frac{1}{\sqrt{\pi}} \right ), \label{CPRS}
\end{equation}
\end{enumerate}
where  $\widehat{Y}_j^L(\bm{s}_i)$ is the optimal linear  predictor, $\sigma^L_j(\bm{s}_i) $ is the corresponding square root variance
and $z^L_j(\bm{s}_i)=(y^L_j(\bm{s}_i)-\widehat{Y}_j^L(\bm{s}_i))/  \sigma^L_j(\bm{s}_i)$.
 Table   \ref{tabnn}   reports the overall means  $\mathrm{RMSE}=\sum_{j=1}^{1000}\overline{\mathrm{RMSE}}_j/1000$, 
 $\mathrm{\log S}=\sum_{j=1}^{1000}\overline{\log S}_j/1000$ and  $\mathrm{CRPS}=\sum_{j=1}^{1000}\overline{{\rm CPRS}}_j/1000$
  for each of the eight  Gaussian RFs.
As expected,  the covariance model
 $\sigma^2\varphi_{0,\mu^*,\beta}$
 outperforms the  Mat{\'e}rn model  limit case  ${\cal M}_{0.5,\beta}$   and the  Mat{\'e}rn   model  ${\cal M}_{1.5,\beta}$
   for the three  prediction scores considered.

 \subsection{{\bf   Application to Yearly total precipitation anomalies}}\label{secondapp}
We consider the dataset in \citet{Kaufman:Schervish:Nychka:2008} of yearly total precipitation anomalies
$\bm{z}=\{ z(\bm{s}_i)$, $i=1,\ldots,n$\}
registered at  $n=7,352$ location sites
in the USA  since 1895 to 1997.
The   yearly totals have been standardized by the
long-run mean and standard deviation for each station from 1962 (Figure \ref{figusaaa}, right part).
\citet{Kaufman:Schervish:Nychka:2008} adapted  a zero-mean Gaussian random field with an exponential covariance model using  covariance tapering
  to reduce the computational costs associated with ML estimation and  optimal linear prediction.
Here we present an improved analysis by considering  a zero mean Gaussian RF with  correlation:
\begin{equation}\label{cc2}
\rho^*(r)=(1-\tau^2)\rho(r)+\tau^2 I(r=0), \quad. r\geq 0,
\end{equation}
that includes a nugget effect $0\leq\tau^2<1$,
as suggested by inspecting the empirical semivariogram  in Figure \ref{figusaaa}, with a correlation function $\rho(r)$
specified as ${\cal M}_{0.5,\beta}$ and  its generalization $\varphi_{0,\mu,\beta}$.
 For the   $\varphi_{0,\mu,\beta}$ model,
 to obtain   sparse  covariance matrices  we fixed different values of $\mu=1.5, 1.75, 2,  2.5, 3.5, 4.5$ and let $\bm{\theta}=(\tau^2,\sigma^2,\beta)^{\top}$ to be estimated
for each of the six Gaussian RFs.

The bottleneck when maximizing  the  likelihood function or computing the optimal liner predictor 
is the Cholesky decomposition which generally has $O(n^3)$ time and $O(n^2)$ memory complexity.
If the matrix is sparse, then the computation of the Cholesky factor can be hastened by using sparse matrix algorithms
and the computational performance of the factorization depends on the percentage of zero elements of the covariance matrix and on how the locations are ordered.

 We point out that ML estimation can partially  take advantage of the  computational benefits associated with the proposed model:
 for a fixed smoothness parameter, the compact support depends on $\beta$ and $\mu$. Even when considering a fixed $\mu$,
the covariance matrix can be highly or slightly sparse, depending on the value of $\beta$ in the optimization process.
An alternative strategy is to use estimation methods with a good balance between statistical efficiency and computational complexity
that do not require any restrictions on the covariance model, such  as
composite likelihood methods  \citep{Eidsvik:Shaby:Reich:Wheeler:Niemi:2013, Bevilacqua:Gaetan:2014}
 or multi-resolution approximation  methods \citep{ka2020} or more in general using Vecchia's approximations \citep{katzfuss2021}. However, in this application we consider ML estimation which is still computational feasible
 although very slow to obtain.

Table  \ref{taba2}  depicts the ML estimates  of  $\bm{\theta}$ with associated standard error for
${\cal M}_{0.5,\beta}$ and  $\varphi_{0,\mu,\beta}$, $\mu=1.5, 1.75,  2,  2.5, 3.5, 4.5$  along with the associated maximized log-likelihood.
It can be appreciated that the  maximized log-likelihood increases with increasing $\mu$, and that
the Mat{\'e}rn  performs the best fitting in this case.
For each model, Table  \ref{taba2}  also reports the percentage of zero entries in the estimated covariance matrix $\Sigma(\hat{\bm{\theta}})$ and the estimated compact support $\widehat{\delta}_{0,\mu,\hat{\beta}} =\mu\hat{\beta}$. As expected, the percentage decreases  and  $\widehat{\delta}_{0,\mu,\hat{\beta}}$ increases with increasing $\mu$.

 Clear computational gains can be achieved using the proposed model when computing  the optimal linear kriging predictor 
which  requires the computation of the Cholesky factor of $\Sigma(\hat{\bm{\theta}})$. To provide an idea of the computational gains,
  Table \ref{taba2} reports,  the time needed for the computation of the Cholesky factor of  $\Sigma(\hat{\bm{\theta}})$ using   the R package \texttt{spam}  \citep{Furrer:Sain:2010} when using
 $\hat{\sigma}^2{\cal M}_{0.5,\hat{\beta}}$  and $\hat{\sigma}^2\varphi_{0,\mu,\hat{\beta}}$ for $\mu=1.5, 1.75, 2,  2.5, 3.5, 4.5$. The time in seconds is expressed in terms of elapsed time, using the
function \texttt{system.time} of the \textsf{R} software on a laptop with a 2.4 GHz processor
and 16 GB of memory.

It is apparent that the computational saving with respect to the Mat{\'e}rn model can be huge when decreasing  $\mu$.
 In particular when $\mu=1.5$  the computation of the Cholesky factor is approximately 50 times faster with respect the Mat{\'e}rn case.
However, the loss of prediction efficiency is generally very small. To  compare the models in terms  of prediction performance, we have used   leave-one-out cross-validation
 as described in \citet{Zhang:Wang:2010}.
 In particular the authors  show that   RMSE,  LSCORE and CRPS leave-one-out cross-validation
can be  computed in just one step by using the estimated covariance matrix. 
In Table   \ref{taba2}  we report  RMSE, LSCORE and  CRPS for the  correlation models considered and the three prediction scores for the Mat{\'e}rn model and its  generalization are quite similar when $\mu\geq2$.
In this specific example,  taking into account
the balance between computational complexity, statistical efficiency and prediction performance, a good choice for the correlation model  could be $\varphi_{0,2,\beta}$.

\begin{figure}[h!]
\centering{
\begin{tabular}{cc}
  \includegraphics[width=5.2cm, height=5.2cm]{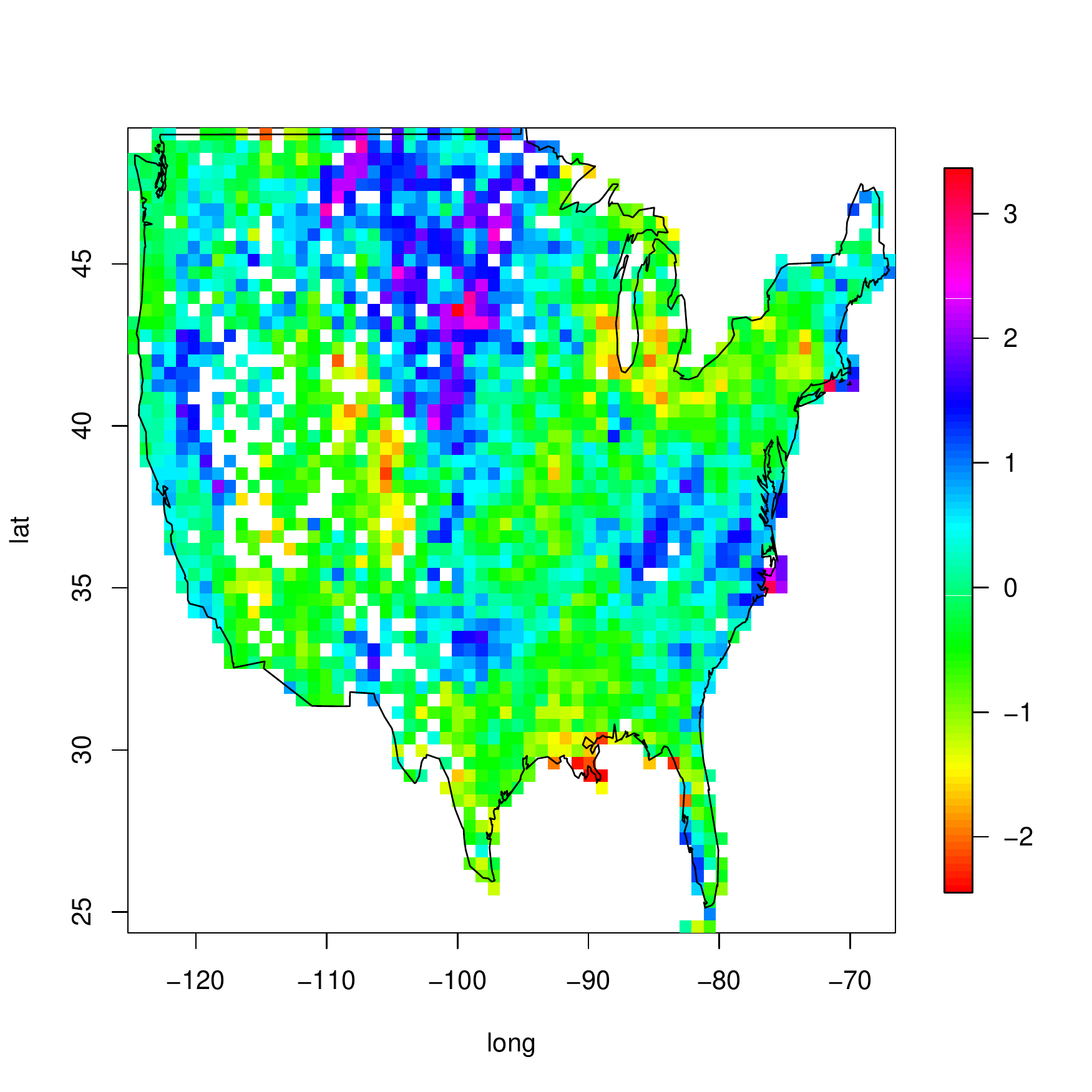}& \includegraphics[width=5.2cm, height=5.2cm]{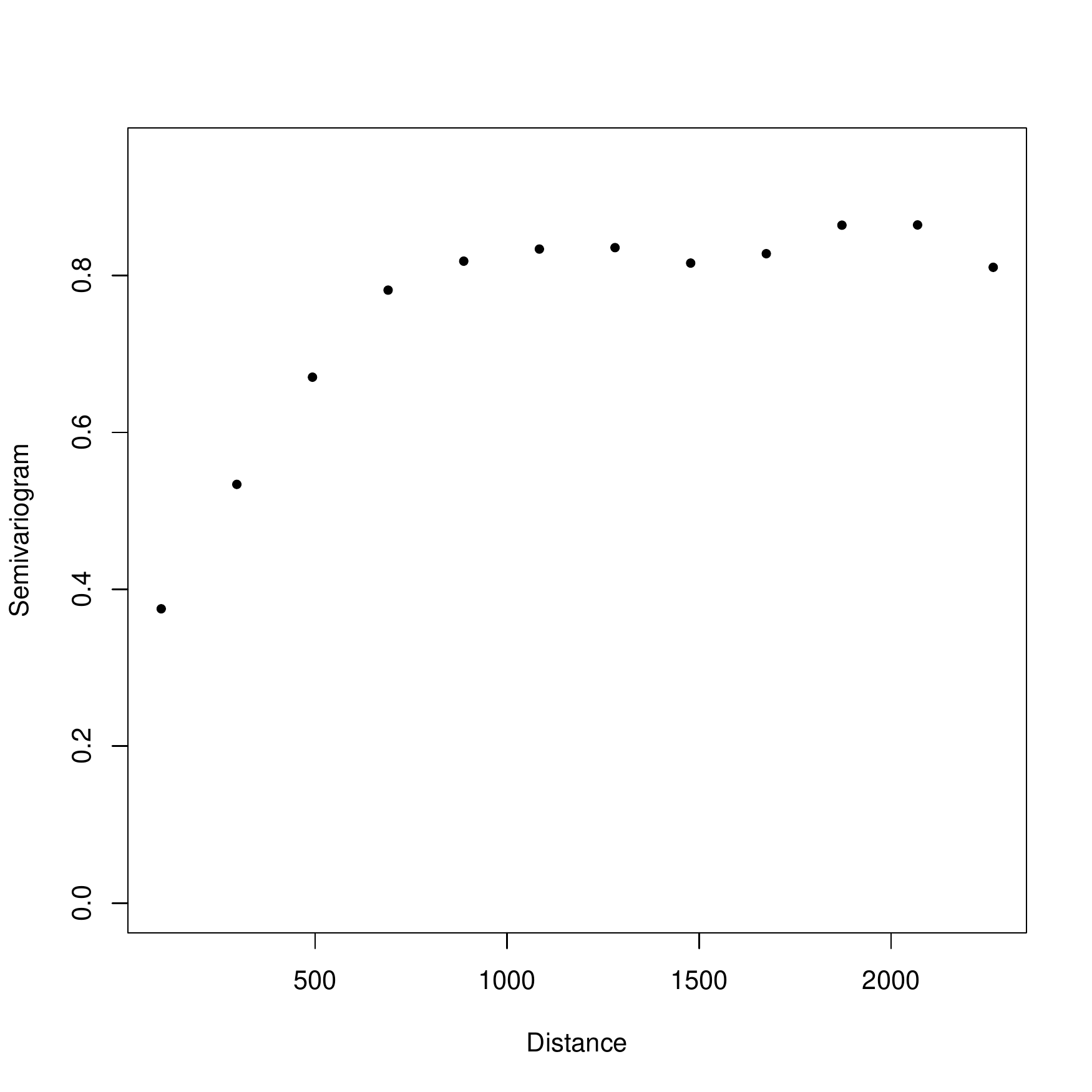}\\
  (a)&(b)
  \end{tabular}
    \caption{From left to right: a) coloured map of precipitation anomalies data.   b) empirical semivariogram of precipitation anomalies data. }\label{figusaaa}
    }
\end{figure}

\begin{table}[h!]
\caption{ML estimates for the parameters of the Mat{\'e}rn model ${\cal M}_{0.5,\beta}$ and the proposed model $\varphi_{0,x,\beta}$
for $x=1.5, 1.75,   2.5,  3.5,  4.5$.
  Prediction measures RMSE,  LSCORE, and CRPS 
  based on  leave-one-out cross-validation are also reported.
The estimated compact support $\hat{\delta}_{0,\mu,\hat{\beta}}=\mu \hat{\beta}$, the percentage of zeros in the estimated covariance matrix  and the computational time (in seconds) to perform the associated Cholesky decomposition are also reported.
} \label{taba2}
\begin{center}
\scalebox{0.6}{
\begin{tabular}{c|c|c|c|c|c|c|c|c|c|c|}
\cline{2-11}
 & $\hat{\tau}^2$ & $\hat{\beta}$ & $\hat{\sigma}^2$ & -loglik & RMSE& LSCORE& CRPS&$\widehat{\delta}_{0,\mu,\hat{\beta}} =\mu\hat{\beta}$&$\%$&TIME \\ \hline
\multicolumn{1}{|c|}{\multirow{2}{*}{$\varphi_{0,1.5,\beta}$}} & \multirow{2}{*}{\begin{tabular}[c]{@{}c@{}}$0.1002$\\ $(0.008)$\end{tabular}} & \multirow{2}{*}{\begin{tabular}[c]{@{}c@{}}$266.38$\\ $(2.20)$\end{tabular}} & \multirow{2}{*}{\begin{tabular}[c]{@{}c@{}}$1.112$\\ $(0.047)$\end{tabular}} & \multirow{2}{*}{$-5443.78$} & \multirow{2}{*}{$0.4691$} & \multirow{2}{*}{$0.9647$}&\multirow{2}{*}{$0.6444$} &\multirow{2}{*}{$399.57$} & \multirow{2}{*}{$0.939$} & \multirow{2}{*}{$1.86$}\\
\multicolumn{1}{|c|}{} &  &  &  &  &  &  & & &&\\ \hline
\multicolumn{1}{|c|}{\multirow{2}{*}{$\varphi_{0,1.75,\beta}$}} & \multirow{2}{*}{\begin{tabular}[c]{@{}c@{}}$0.0945$\\ $(0.008)$\end{tabular}} & \multirow{2}{*}{\begin{tabular}[c]{@{}c@{}}$298.88$\\ $(7.48)$\end{tabular}} & \multirow{2}{*}{\begin{tabular}[c]{@{}c@{}}$1.179 $\\ $(0.053)$\end{tabular}} & \multirow{2}{*}{$-5405.82$} & \multirow{2}{*}{$0.4674$} & \multirow{2}{*}{$0.9607$}&\multirow{2}{*}{$0.6410$} &\multirow{2}{*}{$523.04$}& \multirow{2}{*}{$0.905$} & \multirow{2}{*}{$2.78$}\\
\multicolumn{1}{|c|}{} &  &  &  &  &  &  & & &&\\ \hline
\multicolumn{1}{|c|}{\multirow{2}{*}{$\varphi_{0,2,\beta}$}} & \multirow{2}{*}{\begin{tabular}[c]{@{}c@{}}$0.0964$\\ $(0.007)$\end{tabular}} & \multirow{2}{*}{\begin{tabular}[c]{@{}c@{}}$ 295.21$\\ $(5.29)$\end{tabular}} & \multirow{2}{*}{\begin{tabular}[c]{@{}c@{}}$1.1547 $\\ $(0.053)$\end{tabular}} & \multirow{2}{*}{$-5393.02$} & \multirow{2}{*}{$0.4668$} & \multirow{2}{*}{$0.9595$}&\multirow{2}{*}{$0.6396$} &\multirow{2}{*}{$590.42$}& \multirow{2}{*}{$0.884$} & \multirow{2}{*}{$ 3.63$}\\
\multicolumn{1}{|c|}{} &  &  &  &  &  &  & && & \\ \hline
\multicolumn{1}{|c|}{\multirow{2}{*}{$\varphi_{0,2.5,\beta}$}} & \multirow{2}{*}{\begin{tabular}[c]{@{}c@{}}$0.1103$\\ $(0.008)$\end{tabular}} & \multirow{2}{*}{\begin{tabular}[c]{@{}c@{}}$247.48$\\ $(8.16)$\end{tabular}} & \multirow{2}{*}{\begin{tabular}[c]{@{}c@{}}$0.999$\\ $(0.048)$\end{tabular}} & \multirow{2}{*}{$-5391.58$}& \multirow{2}{*}{$ 0.4669$} & \multirow{2}{*}{$0.9594$}&\multirow{2}{*}{$0.6396$} &\multirow{2}{*}{$618.70$}& \multirow{2}{*}{$0.874$} & \multirow{2}{*}{$4.07$}\\
\multicolumn{1}{|c|}{} &  &  &  &  &  &  & & &&\\ \hline
\multicolumn{1}{|c|}{\multirow{2}{*}{$\varphi_{0,3.5,\beta}$}} & \multirow{2}{*}{\begin{tabular}[c]{@{}c@{}}$0.1110$\\ $(0.011)$\end{tabular}} & \multirow{2}{*}{\begin{tabular}[c]{@{}c@{}}$243.75$\\ $(23.86)$\end{tabular}} & \multirow{2}{*}{\begin{tabular}[c]{@{}c@{}}$0.9905$\\ $(0.085)$\end{tabular}} & \multirow{2}{*}{$-5388.47$}& \multirow{2}{*}{$0.4669$} & \multirow{2}{*}{$0.9594$}&\multirow{2}{*}{$0.6396$} &\multirow{2}{*}{$853.13$} & \multirow{2}{*}{$0.791$} & \multirow{2}{*}{$9.17$}\\
\multicolumn{1}{|c|}{} &  &  &  &  &  &  & && &\\ \hline
\multicolumn{1}{|c|}{\multirow{2}{*}{$\varphi_{0,4.5,\beta}$}} & \multirow{2}{*}{\begin{tabular}[c]{@{}c@{}}$0.1195$\\ $(0.013)$\end{tabular}} & \multirow{2}{*}{\begin{tabular}[c]{@{}c@{}}$216.45$\\ $(27.49)$\end{tabular}} & \multirow{2}{*}{\begin{tabular}[c]{@{}c@{}}$0.9078$\\ $(0.092)$\end{tabular}} & \multirow{2}{*}{$-5386.23$}& \multirow{2}{*}{$0.4669$} & \multirow{2}{*}{$0.9593$}&\multirow{2}{*}{$0.6393$} &\multirow{2}{*}{$974.03$}& \multirow{2}{*}{$0.743$}& \multirow{2}{*}{$11.9$} \\
\multicolumn{1}{|c|}{} &  &  &  &  &  &  & && &\\ \hline
\multicolumn{1}{|c|}{\multirow{2}{*}{${\cal M}_{0.5,\beta}$}} & \multirow{2}{*}{\begin{tabular}[c]{@{}c@{}}$0.1334$\\ $(0.012)$\end{tabular}} & \multirow{2}{*}{\begin{tabular}[c]{@{}c@{}}$167.24$\\ $(18.58)$\end{tabular}} & \multirow{2}{*}{\begin{tabular}[c]{@{}c@{}}$0.7729$\\ $(0.062)$\end{tabular}} & \multirow{2}{*}{$-5377.68$} & \multirow{2}{*}{$0.4668$} & \multirow{2}{*}{$0.9585$}&\multirow{2}{*}{$0.6383$} &\multirow{2}{*}{$\infty$}& \multirow{2}{*}{0} & \multirow{2}{*}{$95.25$}\\
\multicolumn{1}{|c|}{} &  &  &  &  &  &  & & &&\\ \hline
\end{tabular}
}
\end{center}
\end{table}

\section{Conclusions} \label{sec7}

This paper shows that the celebrated Mat{\'e}rn covariance model
is actually a special limit case of a more general compactly supported covariance model
which is a reparameterized version of the generalized Wendland family.
As a consequence, the (reparametrized) Generalized Wendland model is more flexible than the Matérn model with an extra-parameter that allows for switching from compactly to globally supported covariance functions.

On the one hand the proposed family can be potentially more efficient with respect to the Matérn  family when modeling the covariance function of point-referenced spatial data, as shown, for instance in the first real data application. On the other hand, depending on the size of the available dataset, the proposed model can potentially lead to (highly) sparse correlation matrices by fixing  the extra-parameter $\mu$, with clear computational savings with respect to the Matérn  model as shown in  the second real data application.
Further details on  computational gains when  handling sparse matrices with sparse matrices algorithms
 can be found in  \cite{Furrer:2006}, \cite{Kaufman:Schervish:Nychka:2008},  \cite{Furrer:Sain:2010},   \cite{Bevilacqua:2016tap} and  \cite{Porcu:2020sin}  just to mention a few.


Most of the literature on modeling spatial or spatiotemporal multivariate data modeling  is based on the Mat{\'e}rn model as a building block (see \cite{ste2005}, {\cite{pa2006} and \cite{Gneiting:Kleibler:Schlather:2010}, to name a few).
Thus,
our results open new doors and opportunities in   spatial statistics. 
For instance, \cite{Lindgren:Rue:Lindstrom:2011} developed an approximation of Gaussian RFs
with the Mat{\'e}rn covariance model using a Gaussian Markov RF. The connection is established through a specific stochastic partial differential equation (SPDE),
 formulation in that a Gaussian RF with  Mat{\'e}rn covariance is a solution to the SPDE. It could be of  theoretical interest to find a generalization of this specific SPDE exploiting, for instance, the results
 given in \cite{carrizo2018general}.
However, the spectral density  of the proposed model
  cannot be written as the reciprocal of
a polynomial. As a consequence the associated  Gaussian RF is Markovian only when $\mu \to \infty$.

For some  important special cases the proposed covariance model can be easily calculated,  as in  the  Mat{\'e}rn case (see Table  \ref{tab1}).
More generally, the proposed model
can be easily implemented
since efficient numerical computation of the Gaussian hypergeometric function can be found
in different
libraries such as the GNU scientific library \citep{gough2009gnu} and  the most important
statistical softwares including \textsf{R},
MATLAB and Python. In particular, the \textsf{R} package \texttt{GeoModels}  \citep{Bevilacqua:2018aa} used in this paper  for the numerical experiments and the application  computes  the proposed model using the Python implementation of the Gaussian hypergeometric function in the SciPy library \citep{2020SciPy-NMeth}.

\section*{Acknowledgments}

Partial support was provided by FONDECYT grant 1200068 of Chile,  by regional MATH-AmSud program, grant number 20-MATH-03 
and by ANID/PIA/ANILLOS ACT210096 for Moreno Bevilacqua,
 and by FONDECYT grant 11220066 of Chile, 
DIUBB 2120538 IF/R (University of  B\'io-B\'io) for Christian Caama\~no-Carrillo.

\section*{References}

\bibliographystyle{myjmva}
\bibliography{newbib}


\end{document}